\newtheorem{thm}{Theorem}[section]
\newtheorem{yl}[thm]{Lemma}
\theoremstyle{remark}
\newtheorem{zj}{Remark}[section]
\theoremstyle{definition}
\newtheorem{dy}{Definition}[section]
\numberwithin{equation}{section}
\newcommand\relphantom[1]{\mathrel{\phantom{#1}}}
\newcommand\repl{\relphantom}
\begin{document}
\title{stability of line bundle mean curvature flow}
\author{Xiaoli Han}
\address{Xiaoli Han\\ Math department of Tsinghua university\\ Beijing\\ 100084\\ China\\} \email{hanxiaoli@mail.tsinghua.edu.cn}
\author{Xishen Jin}
\address{Xishen Jin\\ School of Mathematics\\ Remin University of China\\ Beijing\\ 100872\\ China\\} \email{jinxsh@mail.ruc.edu.cn}

%\thanks{AMS Mathematics Subject Classification. 53C55,\ 32W20.}
\begin{abstract}
Let $(X,\omega)$ be a \deleted{connected} compact K\"ahler manifold of complex dimension $n$ and $(L,h)$ be a holomorphic line bundle over $X$. The line bundle mean curvature flow was introduced by Jacob-Yau \added{in order to find deformed Hermitian-Yang-Mills metrics on $L$}. In this paper, we consider the stability of the line bundle mean curvature flow. Suppose there exists a deformed Hermitian Yang-Mills metric $\hat h$ on $L$. We prove that the line bundle mean curvature flow converges to $\hat h$\added{ exponentially} in $C^\infty$ sense as long as the initial metric is close to $\hat h$ in $C^2$-norm.
\end{abstract}
\subjclass[2010]{53C24(primary);53C55,53D37,35J60(secondary)}
\keywords{Deformed Hermitian-Yang-Mills metric, Line bundle mean curvature flow, Stability}
\maketitle
\section{Introduction}

Let $(X,\omega)$ be a \deleted{connected} compact K\"ahler manifold of complex dimension $n$ and $L$ be a holomorphic line bundle over $X$. Given a\added{ Hermitian} metric $h$ on $L$, we define \replaced{a}{the} complex function \added{$\zeta$ on $X$ by}
\[
\zeta:=\frac{(\omega-F_h)^n}{\omega^n}\deleted{,}
\]
where $F_h=-\partial\bar \partial \log h$ is the curvature of the Chern connection with respect to the metric $h$. It is easy to see that the average of this function is a fixed complex number
\[
Z_{L,[\omega]}:= \int_X \zeta \frac{\omega^n}{n!}
\]
depending only \added{on} the cohomology classes \deleted{on} $c_1(L)$ and $[\omega]\in H^{1,1}(X,\mathbb{R})$. Let $\hat \theta$ be the argument of $Z_{L,[\omega]}$. \deleted{Suppose $Z_{L,[\omega]}$ does not vanish, we choose $\hat\theta$ such that $\mathop{Im}( e^{-\sqrt{-1}\hat\theta}Z_{L,[\omega]})=0$.}

\begin{dy}
  A Hermitian metric $h$ on $L$ is said to be\added{ a} deformed Hermitian-Yang-Mills (dHYM) metric if it satisfies
  \begin{equation}
    \label{eqn-dHYM-form}
    \operatorname{\mathop{Im}}(\omega-F)^n=\tan(\hat \theta) \operatorname{\mathop{Re}}(\omega-F)^n.
  \end{equation}
\end{dy}

\deleted{Suppose $\lambda_j$ are the eigenvalues of $\omega^{-1}F\added{\in \mathrm{End}(T^{1,0}X)}$.} \replaced{We define the Lagrangian Phase operator $\theta: \wedge^{1,1}X \to \mathbb{R}$ by}{Define}

\[
\theta(F)=\sum_{j=1}^n\arctan\lambda_j
\]
\added{where $\lambda_j$($j=1,\cdots,n$) are the eigenvalues of $\omega^{-1}F \in \mathrm{End}(T^{1,0}X)$.} Then \added{according to arguments in \cite{JY},} the equation (\ref{eqn-dHYM-form}) \replaced{is equivalent to}{can rewrite  as}
\begin{equation}\label{eqn-dHYM-angle}
\theta(F)=\hat\theta(\text{  mod  } 2\pi).
\end{equation}
\replaced{As discussed in \cite{CXY}, we}{we} also remark \deleted{in \cite{CXY}}\added{that} the constant $\hat \theta$ in \eqref{eqn-dHYM-angle} can be obtained by considering the ``winding angle'' of
\[
\gamma(t) =\int_X e^{-t\sqrt{-1} \omega} Ch(L)
\]
as $t$ runs from $+\infty$ to $1$ if $\gamma(t)$ do\added{es} not cross $0\in \mathbb{C}$.

\added{According to\added{ the} superstring theory, the spacetime of the universe is constrained to be the product of a compact Calabi-Yau threefold and a Lorentzian manifold of four dimension. A `duality' relates the geometry of one Calabi-Yau manifold to another `mirror' Calabi-Yau manifold. }The dHYM equation was \added{first} discovered by Marino et all \cite{MMMS} as the requirement for a D-brane on the B-model of mirror symmetry to be supersymmetric. This is explained by \replaced{Leung-Yau-Zaslow}{Leung, Yau and Zaslow}\cite{LYZ} in mathematical language.\deleted{ Recently, it has been studied \added{actively(e.g. \cite{CJY}, \cite{CXY}, \cite{CY}, \cite{HY}, \cite{HJ}, \cite{JY}, \cite{Ping}, \cite{SS}, \cite{T2019} etc)}\deleted{ by Jacob-Yau \cite{JY}, Collins-Jacob-Yau \cite{CJY}, Collins-Xie-Yau \cite{CXY} and some other people}. According to\added{ the} superstring theory, the spacetime of the universe is constrained to be a product of a compact Calabi-Yau threefold and a four-dimensional Lorentzian manifold. A `duality' relates the geometry of one Calabi-Yau manifold to another `mirror' Calabi-Yau manifold.} From a view\deleted{d}point of differential geometry, this might be thought of a relationship between the existence of `nice' metrics on the line bundle over one Calabi-Yau manifold and special Lagrangian submanifolds in another Calabi-Yau manifold. In \cite{LYZ}, Leung-Yau-Zaslow showed that the \replaced{dHYM}{deformed Hermitian-Yang-Mills} equation on a line bundle corresponds to the special Lagrangian equation in the mirror. Recently, the dHYM metrics have been studied \added{actively(e.g. \cite{CJY}, \cite{CXY}, \cite{CY}, \cite{HY}, \cite{HJ}, \cite{JY}, \cite{Ping}, \cite{SS}, \cite{T2019} etc)}\deleted{ by Jacob-Yau \cite{JY}, Collins-Jacob-Yau \cite{CJY}, Collins-Xie-Yau \cite{CXY} and some other people}.

In \cite{JY}, Jacob-Yau gave a necessary and sufficient condition for the existence of dHYM metrics on the K\"ahler surface. More precisely, they proved that:
\begin{thm}[\cite{JY}]
Let $L$ be a holomorphic line bundle on K\replaced{\"a}{a}hler surface $X$. Then there exists one dHYM metric on $L$ if and only if there exists \replaced{a}{one} metric $h$ such that $\Omega(h)=\cot(\hat \theta)\omega+\replaced{\sqrt{-1}}{i}F>0.$
\end{thm}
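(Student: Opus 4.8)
The plan is to reduce the dHYM equation on a K\"ahler surface to a complex Monge--Amp\`ere equation of Calabi--Yau type and then apply Yau's theorem. The first step is the pointwise reduction. On a surface $\theta(F)=\arctan\lambda_1+\arctan\lambda_2$, where $\lambda_1,\lambda_2$ are the eigenvalues of $\omega^{-1}F$. Applying the addition formula for $\arctan$, the dHYM equation $\theta(F)=\hat\theta\ (\mathrm{mod}\ 2\pi)$ forces $\lambda_1\lambda_2+\cot\hat\theta\,(\lambda_1+\lambda_2)-1=0$, i.e.
\[
(\lambda_1+\cot\hat\theta)(\lambda_2+\cot\hat\theta)=1+\cot^2\hat\theta=\csc^2\hat\theta .
\]
Since $\lambda_1+\cot\hat\theta$ and $\lambda_2+\cot\hat\theta$ are exactly the eigenvalues of $\omega^{-1}\Omega(h)$ with $\Omega(h)=\cot\hat\theta\,\omega+\sqrt{-1}F$, the left-hand side equals $\Omega(h)^2/\omega^2$, so the dHYM equation implies $\Omega(h)^2=\csc^2\hat\theta\,\omega^2$. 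For the converse one must pin down a branch: passing to $\tan$ loses information mod $\pi$, so the displayed identity is satisfied both on $\{\theta(F)=\hat\theta\}$ and on $\{\theta(F)=\hat\theta-\pi\}$. I would check, keeping $\hat\theta\in(0,\pi)$ (the range in which $\cot\hat\theta$ is finite and the surface problem is meaningful), that $\{\Omega(h)^2=\csc^2\hat\theta\,\omega^2\}$ splits into exactly two pieces according to the sign of the necessarily definite form $\Omega(h)$, that $\theta(F)\equiv\hat\theta$ on $\{\Omega(h)>0\}$ and $\theta(F)\equiv\hat\theta-\pi$ on $\{\Omega(h)<0\}$, and that on a surface $\theta(F)\in(-\pi,\pi)$ automatically so no further $2\pi$-ambiguity arises. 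Conclusion of Step~1: $h$ is dHYM $\iff$ $\Omega(h)>0$ and $\Omega(h)^2=\csc^2\hat\theta\,\omega^2$.

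Granting Step~1, the ``only if'' direction is immediate: if a dHYM metric $\hat h$ exists then already $\Omega(\hat h)>0$, so one may take $h=\hat h$. For the ``if'' direction, let $h_0$ be a metric with $\Omega_0:=\Omega(h_0)>0$; then $[\Omega_0]$ is a K\"ahler class. The key cohomological point is that the Calabi volume-compatibility
\[
\int_X\Omega_0^{\,2}=\csc^2\hat\theta\int_X\omega^2
\]
holds automatically. Indeed, expanding $\int_X\Omega_0^{\,2}=\cot^2\hat\theta\int_X\omega^2+2\cot\hat\theta\int_X\omega\wedge\sqrt{-1}F+\int_X(\sqrt{-1}F)^2$, the desired identity collapses (using $\csc^2\hat\theta=1+\cot^2\hat\theta$) to $2\cot\hat\theta\int_X\omega\wedge\sqrt{-1}F=\int_X\omega^2-\int_X(\sqrt{-1}F)^2$, and this is precisely the relation $\operatorname{Im}\big(e^{-\sqrt{-1}\hat\theta}Z_{L,[\omega]}\big)=0$ written out, because $Z_{L,[\omega]}=\tfrac12\int_X(\omega+\sqrt{-1}F)^2$ on a surface; all the integrals here depend only on $[\omega]$ and on the fixed cohomology class of the real $(1,1)$-form $\sqrt{-1}F$, so the matching of volumes is a purely topological consequence of the definition of $\hat\theta$.

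Since $[\Omega_0]$ is K\"ahler and the volumes match, Yau's solution of the Calabi conjecture provides a unique smooth real function $u$ with $\Omega_0+\sqrt{-1}\partial\bar\partial u>0$ and $(\Omega_0+\sqrt{-1}\partial\bar\partial u)^2=\csc^2\hat\theta\,\omega^2$. Setting $h:=h_0e^{-u}$ one has $\Omega(h)=\Omega_0+\sqrt{-1}\partial\bar\partial u>0$ and $\Omega(h)^2=\csc^2\hat\theta\,\omega^2$, so by the equivalence of Step~1 the smooth metric $h$ is dHYM. I expect the genuinely delicate point to be the branch bookkeeping in Step~1 --- verifying that the ``positive'' solution of the $\arctan$ identity is exactly $\{\Omega(h)>0\}$ and that any dHYM solution lies there; by contrast the volume identity is a one-line cohomological computation and the existence half is a direct citation of Yau's theorem.
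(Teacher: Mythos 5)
Your proposal is correct and is essentially the original argument: the present paper only quotes this theorem from Jacob--Yau \cite{JY} without proof, and the proof there proceeds exactly along your lines --- the $\arctan$ addition identity converts the surface dHYM equation into the Monge--Amp\`ere equation $\Omega(h)^2=\csc^2\hat\theta\,\omega^2$ together with the branch condition $\Omega(h)>0$, the volume compatibility $\int_X\Omega(h)^2=\csc^2\hat\theta\int_X\omega^2$ is automatic from $\operatorname{Im}\bigl(e^{-\sqrt{-1}\hat\theta}Z_{L,[\omega]}\bigr)=0$, and Yau's theorem then produces the solution in the K\"ahler class $[\Omega_0]$. The branch bookkeeping you flag does go through: when $\hat\theta\in(0,\pi)$ one has $\arctan\lambda_i=\hat\theta-\arctan\lambda_j>\hat\theta-\tfrac{\pi}{2}$, hence $\lambda_i+\cot\hat\theta>0$, so dHYM solutions lie on the positive branch, while the other root of the quadratic identity corresponds to $\theta=\hat\theta-\pi$ and $\Omega(h)<0$, exactly as in \cite{JY} (the case $\hat\theta\in(-\pi,0)$ is reduced to this by an overall sign convention).
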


In order to study the existence of dHYM metrics on high dimensional K\"ahler manifolds, Jacob-Yau \cite{JY} introduced \replaced{a}{one} parabolic evolution flow for the metric $h$ on the line bundle $L$.

\begin{dy}(Line bundle MCF)
  Given one Hermitian metric $h_0$ on $L$, we define a flow of metrics $h\replaced{_t}{(t)}= e^{-u\replaced{_t}{(t)}}h_0$ by the following equation:
\begin{equation}
\label{mainequation}
\frac{d}{dt}u\added{_t}=\theta(F\replaced{_{h_t}}{(t)})-\hat\theta
\end{equation}
\added{where $F_{h_t}=F_{h_0}+\deleted{\sqrt{-1}}\partial \bar \partial u_t$ is the Chern curvature of $L$ with respect to $h_t$.}
\end{dy}

The flow \eqref{mainequation} can be regarded as the complex version of the mean curvature flow for the Lagrangian graph. Thus it is also called line bundle mean curvature flow (line bundle MCF). \replaced{As shown in \cite{JY}, this}{This} equation is parabolic and exists in a short time $[0,\varepsilon)$. \added{They also proved the following theorem on extension of line bundle MCF.}

\begin{thm}[\cite{JY}]
  Let $L$ be a holomorphic line bundle over the compact K\"ahler manifold $X$ and $h\replaced{_t}{(t)}$ be a path of metrics on  $L$ solving (\ref{mainequation}). Assume that $Z_{L, [\omega]}\neq 0$ and $|\nabla F\added{_t}|^2_g\leq C$ uniformly in time \added{$[0,T)$}, then all derivatives $|\nabla^k F\added{_t}|_g$ are bounded by some constant $C_k$. Furthermore, if $T$ is finite, then the flow can be extended to $T+\varepsilon$. If $T=\infty$, then the flow subsequently converges to a smooth solution of (\ref{eqn-dHYM-angle}).
\end{thm}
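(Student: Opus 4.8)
The plan is to derive the whole statement from a priori estimates for the Chern curvature $F_t:=F_{h_t}$ together with the parabolic maximum principle. First note that the flow \eqref{mainequation} is equivalent to the \emph{closed} evolution equation
\[
\frac{\pa}{\pa t}F_t=\pa\pb\,\theta(F_t),
\]
since $F_t=F_{h_0}+\pa\pb u_t$ and $\frac{\pa}{\pa t}u_t=\theta(F_t)-\hat\theta$ with $\hat\theta$ constant. Let $\lam_1,\dots,\lam_n$ be the eigenvalues of $\ome^{-1}F_t$; linearising the Lagrangian phase we set $\eta^{i\bb j}:=\pa\theta/\pa F_{i\bb j}$, a positive Hermitian form which in a frame diagonalising $\ome^{-1}F_t$ is diagonal with entries $(1+\lam_j^2)^{-1}$, and write $\widetilde\Delta:=\eta^{i\bb j}\grad_i\grad_{\bb j}$ for the associated (time-dependent) second-order operator. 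Differentiating the flow gives $\frac{\pa}{\pa t}\theta(F_t)=\widetilde\Delta\,\theta(F_t)$, so the maximum principle keeps $\theta(F_t)$ between $\min_X\theta(F_{h_0})$ and $\max_X\theta(F_{h_0})$ on $[0,T)$. Moreover $[F_t]$ is independent of $t$, so the hypothesis $|\grad F_t|_g^2\le C$ bounds the oscillation of $F_t$ and, with the cohomological normalisation, yields $|F_t|_g\le C$ uniformly; hence the $\lam_j$ are bounded and $\widetilde\Delta$ is \emph{uniformly} elliptic on $[0,T)$.

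The higher order estimates come from the evolution equations of $|\grad^kF_t|_g^2$. Using the Bochner-type commutation formulas on the fixed K\"ahler manifold $(X,\ome)$ together with $\frac{\pa}{\pa t}F_t=\pa\pb\,\theta(F_t)$, one checks that, once $|F_t|_g$ and $|\grad^jF_t|_g$ for $j<k$ are controlled, these have the schematic form
\[
\Big(\frac{\pa}{\pa t}-\widetilde\Delta\Big)|\grad^kF_t|_g^2\ \le\ -c_0\,|\grad^{k+1}F_t|_g^2+C_k\big(1+|\grad^kF_t|_g^2\big),
\]
where $c_0>0$ is furnished by the uniform ellipticity of $\widetilde\Delta$ and the curvature of $(X,\ome)$ is absorbed into $C_k$. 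Starting from the assumed bound on $|\grad F_t|_g^2$ (the case $k=1$) and inducting on $k$ with the Bernstein test function $\Phi_k=|\grad^kF_t|_g^2+A_k|\grad^{k-1}F_t|_g^2$, $A_k$ large, one gets $(\frac{\pa}{\pa t}-\widetilde\Delta)\Phi_k\le-\Phi_k+C_k'$, so the maximum principle gives $|\grad^kF_t|_g^2\le C_k$ on $[0,T)$, with finite data since $h_0$ is smooth. (Alternatively, once $F_t\in C^{0,1}$ uniformly one may differentiate the equation and bootstrap with parabolic Schauder estimates.) Arranging these evolution inequalities so that a strictly negative multiple of $|\grad^{k+1}F_t|^2$ survives after absorbing all lower-order and curvature terms, and securing the uniform ellipticity of $\widetilde\Delta$ on which $c_0$ rests, is the technical heart of the argument and the main obstacle.

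Granting $|\grad^kF_t|_g\le C_k$ for all $k$ on $[0,T)$: then $\frac{\pa}{\pa t}F_t=\pa\pb\,\theta(F_t)$ and $\frac{\pa}{\pa t}u_t=\theta(F_t)-\hat\theta$ are bounded in every $C^m$, so if $T<\infty$ the families $F_t$ and $u_t$ are Cauchy in $C^\infty(X)$ as $t\to T$, converging to smooth limits with $F_T=F_{h_0}+\pa\pb u_T$, and $h_T:=e^{-u_T}h_0$ is a smooth metric. Since the flow is parabolic at any metric (the symbol $\eta^{i\bb j}$ is always positive definite), short-time existence restarts it from $h_T$ and extends the solution to $[0,T+\varepsilon)$.

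Finally, if $T=\infty$, the uniform $C^\infty$ bounds and the Arzel\`{a}--Ascoli theorem produce a sequence $t_i\to\infty$ and a smooth closed real $(1,1)$-form $F_\infty$ with $[F_\infty]=[F_{h_0}]$ and $F_{t_i}\to F_\infty$ in $C^\infty$; it remains to see $\theta(F_\infty)\equiv\hat\theta\ (\mathrm{mod}\ 2\pi)$. For this one uses that $\operatorname{osc}_X\theta(F_t)$ is non-increasing (maximum principle for $\frac{\pa}{\pa t}\theta=\widetilde\Delta\theta$) and that, applying the strong maximum principle to the limit flow through $F_\infty$ (available by the uniform bounds), its limit must be $0$; equivalently one exhibits a Lyapunov functional such as $\int_X|\theta(F_t)-\hat\theta|^2\,d\mu_t$ for the natural measure $d\mu_t$ making $\widetilde\Delta$ formally self-adjoint, the error from $\frac{\pa}{\pa t}d\mu_t$ being controlled by the uniform derivative bounds. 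Either way $\theta(F_\infty)$ is a constant $c$. Writing $(\ome-F_\infty)^n=\rho\,e^{\im c}\ome^n$ with $\rho=\prod_j\sqrt{1+\lam_j^2}>0$, and using $\int_X(\ome-F_\infty)^n=n!\,Z_{L,[\ome]}$ and $\hat\theta=\arg Z_{L,[\ome]}$, we get $\sin(c-\hat\theta)\int_X\rho\,\ome^n=\IM(e^{-\im\hat\theta}\,n!\,Z_{L,[\ome]})=0$ and $\cos(c-\hat\theta)\int_X\rho\,\ome^n=n!\,|Z_{L,[\ome]}|>0$; since $\rho>0$ this forces $c\equiv\hat\theta\ (\mathrm{mod}\ 2\pi)$, so $F_\infty$ solves \eqref{eqn-dHYM-angle}. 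The hypothesis $Z_{L,[\ome]}\neq0$ enters precisely in this last step (and already in the definition of $\hat\theta$).
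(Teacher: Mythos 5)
First, a point of context: the paper does not prove this statement at all --- it is quoted from Jacob--Yau \cite{JY} as background --- so your proposal can only be measured against the standard argument, which is essentially the route you take: bound $|F_t|$ so that $\Delta_\eta$ is uniformly elliptic, run a parabolic Bernstein/Shi-type induction on $|\nabla^k F_t|^2$, extend past a finite $T$ by short-time existence, and for $T=\infty$ extract a subsequential limit and identify its constant phase using $Z_{L,[\omega]}\neq 0$. Your endgame is fine: the extension step, the Arzel\`a--Ascoli subconvergence, and the identification $c\equiv\hat\theta\ (\mathrm{mod}\ 2\pi)$ from $\int_X(\omega-F_\infty)^n=n!\,Z_{L,[\omega]}$ with $\zeta=re^{\sqrt{-1}\theta}$, $r>0$, are all correct (the decay of $\operatorname{osc}\theta$ is only sketched, but the limit-flow/strong maximum principle or Harnack argument you gesture at does work).

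There are, however, two genuine gaps in the part that carries the load. (1) The step ``$|\nabla F_t|_g\le C$ plus the fixed class gives $|F_t|_g\le C$'' is not justified by ``oscillation plus cohomological normalisation'': that argument only controls the trace $\Lambda_\omega F_t$, whose average $\int_X F_t\wedge\omega^{n-1}$ is cohomological. To bound the full form one needs an extra input, e.g.\ that $\int_X F_t\wedge F_t\wedge\omega^{n-2}$ is also a cohomological constant, which together with the pointwise identity relating $F_t\wedge F_t\wedge\omega^{n-2}/\omega^n$ to $(\Lambda_\omega F_t)^2-|F_t|_\omega^2$ gives a uniform $L^2$ bound, upgraded to $L^\infty$ by the gradient bound; since the uniform ellipticity of $\Delta_\eta$ (and every constant downstream) rests on this, it must be proved, not asserted. (2) More seriously, the schematic inequality $(\frac{\partial}{\partial t}-\Delta_\eta)|\nabla^kF|^2\le -c_0|\nabla^{k+1}F|^2+C_k(1+|\nabla^kF|^2)$ fails as stated at the very first induction step $k=2$: writing the flow as $\frac{\partial}{\partial t}F=\Delta_\eta F+\eta^{-1}*\eta^{-1}*F*\nabla F*\nabla F$, two covariant derivatives of the reaction term contain $\eta^{-1}*\eta^{-1}*F*\nabla^2F*\nabla^2F$, which paired with $\nabla^2F$ yields a term of size $|\nabla^2F|^3$ that the bounds on $|F|,|\nabla F|$ do not linearize. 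Your additive test function $\Phi_k=|\nabla^kF|^2+A_k|\nabla^{k-1}F|^2$ only supplies a good term proportional to $-|\nabla^kF|^2$ and cannot absorb a cubic term; one needs the multiplicative Shi/Huisken-type quantity $(A+|\nabla F|^2)\,|\nabla^2F|^2$, whose evolution produces $-c\,|\nabla^2F|^4$, or alternatively a Krylov--Safonov/Schauder bootstrap starting from $\dot\theta=\Delta_\eta\theta$. You yourself label this absorption ``the technical heart and the main obstacle,'' and as written it is exactly the missing step; note that the present paper meets the same cubic structure in Lemma \ref{lemma-evo-3} (where the best one gets is $C+C\Gamma^2-C\Xi$) and only gets through because smallness of $\Theta$ is available there --- a luxury the quoted theorem does not grant.
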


\added{In \cite{JY},} Jacob-Yau also studied the long time existence and convergence of the line bundle MCF under some assumptions.

\begin{thm}[\cite{JY}, Theorem 1.3]
\label{thm-JY1}
Let $(X,\omega)$ be a K\"ahler manifold with non-negative orthogonal bisectional curvature and $L$ be an ample line bundle. Then there exists a natural number $k$ such that $L^{\otimes k}$ admits a dHYM metric and it is constructed via a smoothly converging family of metrics along the line bundle MCF.
\end{thm}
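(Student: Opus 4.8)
The plan is to use the ampleness of $L$ to run the line bundle mean curvature flow, for a suitable power $L^{\otimes k}$, starting from a metric that sits deep in the \emph{hypercritical phase} where the Lagrangian phase operator $\theta$ is concave; to establish uniform-in-time a priori estimates using the non-negative orthogonal bisectional curvature of $(X,\omega)$; and finally to invoke the extension theorem of Jacob--Yau stated above to obtain long-time existence and smooth convergence to a dHYM metric.

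First I would fix the power $k$. Since $L$ is ample, for every sufficiently large $k$ there is a Hermitian metric $h_0$ on $L^{\otimes k}$ whose Chern curvature $F_{h_0}$ is positive; more precisely one can arrange that the eigenvalues $\lambda_1,\dots,\lambda_n$ of $\omega^{-1}F_{h_0}$ are all bounded below by some $\Lambda_k$ with $\Lambda_k\to\infty$ as $k\to\infty$. Then each $\arctan\lambda_j$ is within $O(\Lambda_k^{-1})$ of $\tfrac{\pi}{2}$, so both $\theta(F_{h_0})$ and the constant $\hat\theta$ attached to $L^{\otimes k}$ lie in the interval $\big((n-1)\tfrac{\pi}{2},\,n\tfrac{\pi}{2}\big)$, the range on which $\theta$, regarded as a function of the curvature, is concave and its linearization is uniformly elliptic; moreover $Z_{L^{\otimes k},[\omega]}\neq 0$. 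A preliminary step is a barrier argument via the maximum principle showing that this cone condition is preserved along \eqref{mainequation}: the eigenvalues of $\omega^{-1}F_t$ stay bounded below for all $t$, so $\theta(F_t)$ remains in the concave range with ellipticity constants independent of $t$.

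With the cone condition in hand I would carry out the a priori estimates. The $C^0$ bound on the oscillation of $u_t$ follows from the fact that the flow preserves $c_1(L^{\otimes k})$ together with a Moser iteration (or a direct maximum principle using concavity). For the gradient and Hessian estimates one differentiates \eqref{mainequation} and computes the evolution of $|\nabla F_t|_g^2$ and of the largest eigenvalue of $\omega^{-1}F_t$; the resulting Bochner-type inequalities contain terms built from the curvature tensor of $(X,\omega)$, and the non-negative orthogonal bisectional curvature hypothesis is exactly what gives these terms a favorable sign so that the parabolic maximum principle yields $|\nabla F_t|_g\leq C$ and $|F_t|_g\leq C$ uniformly in $t$. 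Feeding the uniform gradient bound into the extension theorem of Jacob--Yau then bounds all $|\nabla^\ell F_t|_g$ uniformly in $t$ and shows $T=\infty$.

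For convergence I would note that the phase $\varphi_t:=\theta(F_t)-\hat\theta$ satisfies $\partial_t\varphi_t=\Delta_{F_t}\varphi_t$, where $\Delta_{F_t}$ is the time-dependent, uniformly elliptic linearization of $\theta$ and has no zeroth-order term; hence $\max_X\varphi_t$ is non-increasing, $\min_X\varphi_t$ is non-decreasing, the spatial oscillation of $\varphi_t$ tends to $0$, and the $C^0$ control of $u_t$ then forces $\varphi_t\to 0$ uniformly. Combined with the uniform higher-order bounds, this gives smooth convergence of $h_t$ to a limit metric $\hat h$ with $\theta(F_{\hat h})=\hat\theta\pmod{2\pi}$, i.e. a dHYM metric on $L^{\otimes k}$ constructed as a smoothly converging family along the line bundle MCF. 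The main obstacle is the uniform-in-time Hessian estimate: one must organize the evolution inequality for the largest eigenvalue of $\omega^{-1}F_t$ so that the third-order terms are absorbed using the concavity of $\theta$ and the ambient curvature terms are dominated using the orthogonal bisectional curvature assumption, with all constants independent of $t$; this in turn relies on having chosen $k$ large enough at the outset to keep the flow strictly inside the hypercritical cone.
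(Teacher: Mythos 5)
This theorem is not proved in the paper you were given: it is quoted from Jacob--Yau \cite{JY} (their Theorem 1.3), and the only indication of its proof here is the remark that follows it, namely that the power $k$ is chosen so that the initial data is hypercritical, $\theta(F_{u_0})>\frac{(n-1)\pi}{2}$, that this condition is preserved along the flow and forces $F_{u_t}>0$, and that the resulting concavity of $\theta$ lets Evans--Krylov theory supply the higher-order estimates. At the level of architecture your outline matches this intended strategy: large tensor power to enter the hypercritical range (with $Z_{L^{\otimes k},[\omega]}\neq 0$), preservation of the range by the maximum principle (the cleanest mechanism being that $\theta$ itself solves $\partial_t\theta=\Delta_\eta\theta$, so $\min_X\theta$ is non-decreasing and each $\arctan\lambda_j>\theta-(n-1)\frac{\pi}{2}>0$), uniform estimates using the curvature hypothesis, the extension theorem, and convergence via decay of the oscillation of the phase.

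Two of your intermediate claims, however, are gaps rather than steps. First, the hypercritical cone condition does not make the linearization $\Delta_\eta=\eta^{p\bar q}\partial_p\partial_{\bar q}$ uniformly elliptic: $\eta^{-1}=(I+K^2)^{-1}$ degenerates as the eigenvalues of $K=\omega^{-1}F_t$ go to $+\infty$, and the cone condition only bounds them from below. The upper bound on the eigenvalues is itself the main second-order estimate, and that is where non-negative orthogonal bisectional curvature actually enters: in the evolution of the largest eigenvalue the commutator terms appear contracted as $\eta^{k\bar k}R_{1\bar 1k\bar k}(\lambda_1-\lambda_k)(\cdots)$ and acquire a sign because $\lambda_1$ is the largest eigenvalue and all $\lambda_j>0$; by contrast the curvature terms in the Bochner formula for $|\nabla F_t|_g^2$ are not sign-definite under this hypothesis, so your assertion that the gradient-of-curvature bound ``drops out'' of the same sign consideration is unjustified --- in \cite{JY}, and as the remark here indicates, the higher-order estimates come from concavity plus Evans--Krylov once the eigenvalues are bounded, not from a direct maximum principle on $|\nabla F_t|^2$. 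Second, the convergence bookkeeping is backwards: ``the flow preserves $c_1$ plus Moser iteration'' is not an argument for a $C^0$ bound on $u_t$, and it is not $C^0$ control of $u_t$ that forces $\varphi_t=\theta(F_t)-\hat\theta\to 0$. What pins the limit down is the cohomological normalization of $\hat\theta$: writing $(\omega-F_t)^n=r_t e^{\sqrt{-1}\theta_t}\omega^n$ with $r_t>0$, one has $\int_X r_t\sin(\theta_t-\hat\theta)\,\omega^n=0$, and in the hypercritical range $\theta_t-\hat\theta\in(-\frac{\pi}{2},\frac{\pi}{2})$, so $\theta_t-\hat\theta$ must change sign at every time; combined with exponential decay of the oscillation of $\theta_t$ (heat equation plus a Harnack inequality, exactly as in Section 6 of this paper) this gives $\theta_t\to\hat\theta$, and only then does integrating $\dot u_t=\theta_t-\hat\theta$ control $u_t$ and upgrade subsequential convergence to full smooth convergence.
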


\begin{zj}
  Indeed, the condition on $k$ in \added{Theorem \ref{thm-JY1}} guarantees that the initial data \added{$u_0$} satisfies the so-called hypercritical condition, i.e. $\theta(F\replaced{_{u_0}}{(0)})>\frac{(n-1)\pi}{2}$\added{. As a consequence of this hypercritical condition, $F_{u_t}>0$ for all $t\geq 0$.} \replaced{Then}{so that} the operator $\theta(F\replaced{_{u_t}}{(t)})$ is concave and the Evans-Krylov \replaced{theory}{estimate} works\added{ for higher order estimates}.
\end{zj}

In \cite{CJY}, Collins-Jacob-Yau considered the existence of dHYM metric under assumption of $\mathcal{C}$-subsolutions. They also get the following result for the line bundle MCF.

\begin{thm}[\cite{CJY}, Remark 7.4]
If $\hat \theta>\frac{(n-1)\pi}{2}$ and $u_0$ is a subsolution with $\theta(F\replaced{_{u_0}}{(0)})>\frac{(n-1)\pi}{2}$, then the line bundle MCF starting from $u_0$ converges smoothly to a dHYM metric.
\end{thm}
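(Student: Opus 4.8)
The plan is to treat this as a long-time-existence-and-convergence statement for the parabolic equation \eqref{mainequation} under a $\mathcal{C}$-subsolution hypothesis, following the standard two-stage scheme: first derive a priori estimates that are uniform in $t\in[0,\infty)$, then run a convergence argument. Two structural facts drive everything. First, the linearization of $\theta$ is positive definite — in a unitary frame diagonalizing $\omega^{-1}F$ its diagonal part is $\mathrm{diag}\big(1/(1+\lambda_j^2)\big)$ — so \eqref{mainequation} is parabolic, and uniformly so once a $C^2$ bound is in hand. Second, $w_t:=\dot u_t=\theta(F_{u_t})-\hat\theta$ satisfies the linear heat-type equation $\partial_t w_t=L_t w_t$ with $L_t=\sum_{i,j}\tfrac{\partial\theta}{\partial F_{i\bar j}}\nabla_i\nabla_{\bar j}$, which has no zeroth-order term; the maximum principle then shows $\max_X w_t$ is non-increasing and $\min_X w_t$ non-decreasing, so $\theta(F_{u_t})$ stays in the fixed compact subinterval $[\min_X\theta(F_{u_0}),\max_X\theta(F_{u_0})]$ of $\big(\tfrac{(n-1)\pi}{2},\tfrac{n\pi}{2}\big)$. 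In particular the hypercritical condition $\theta(F_{u_t})>\tfrac{(n-1)\pi}{2}$ is preserved with a uniform gap, so $F_{u_t}>0$ for all $t$ and, as noted above, $\theta$ is concave on this cone of Hermitian matrices.

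The a priori estimates then proceed in the usual order. The $C^0$ estimate amounts to a uniform bound on $\mathrm{osc}_X u_t$ (the flow fixes $u_t$ only up to an additive constant in the limit); this is where the $\mathcal{C}$-subsolution enters at zeroth order, by adapting the ABP-type $C^0$ argument of \cite{CJY} for the elliptic dHYM equation (modelled on the work of B{\l}ocki and Sz\'ekelyhidi) to the parabolic setting, with $u_0$ playing the role of the barrier. For the $C^1$ estimate one gets $|\nabla u_t|_g\le C$ uniformly, either by the maximum principle applied to a test quantity of the form $|\nabla u_t|^2 e^{\phi(u_t)}$ with $\phi$ built from $u_t$ and the subsolution, or by a blow-up/Liouville argument; this step is comparatively soft given that $F_{u_t}>0$ is already known. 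The heart of the proof is the $C^2$ estimate: differentiating \eqref{mainequation} twice in the space directions and applying the parabolic maximum principle to $Q=\log\lambda_{\max}(\nabla^2 u_t)$ plus lower-order auxiliary terms, one uses the concavity of $\theta$ on the hypercritical cone to absorb the dangerous third-order terms and the $\mathcal{C}$-subsolution hypothesis to dominate the remaining terms of order $\le 1$ — the second-order-estimate mechanism for $\mathcal{C}$-subsolutions (Guan, Sz\'ekelyhidi), carried out for the dHYM operator in \cite{CJY}. The outcome is $\sup_X|\nabla^2 u_t|_g\le C\big(1+\sup_X|\nabla u_t|_g^2\big)$, hence a uniform $C^2$ bound and uniform parabolicity with two-sided control of $L_t$. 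I expect this to be the main obstacle: it is the step that genuinely uses the subsolution, and the delicate point is keeping the many curvature and gradient terms under control and checking that the subsolution constant dominates them uniformly in time.

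With a uniform $C^2$ bound, concavity of $\theta$, and uniform parabolicity, the parabolic Evans–Krylov theorem gives a uniform $C^{2,\alpha}$ bound in space and time, and parabolic Schauder estimates bootstrap to uniform $C^k$ bounds for all $k$; in particular the flow (which exists for a short time by \cite{JY}) exists for all $t\ge 0$ and $\{u_t-u_t(x_0)\}_{t\ge 0}$ is precompact in $C^\infty$ for a fixed $x_0$. For convergence, note $\int_X(\omega-F_{u_t})^n=n!\,Z_{L,[\omega]}$ is a cohomological constant; writing $(\omega-F_{u_t})^n=\rho_t e^{\sqrt{-1}\theta(F_{u_t})}\omega^n$ with $\rho_t=\prod_j(1+\lambda_j^2)^{1/2}>0$ and taking imaginary parts yields $\int_X\rho_t\sin\!\big(\theta(F_{u_t})-\hat\theta\big)\,\omega^n=0$. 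Since $w_t$ takes values in $\big(-\tfrac{\pi}{2},\tfrac{\pi}{2}\big)$, where $\sin$ agrees in sign with its argument, $w_t$ cannot be of one sign unless $w_t\equiv 0$; combined with the monotonicity above this gives $M^+:=\lim_{t\to\infty}\max_X w_t\ge 0$ and $M^-:=\lim_{t\to\infty}\min_X w_t\le 0$. Passing to a limit along $t_i\to\infty$ of the time-translated flows (using the uniform $C^\infty$ bounds) produces an eternal solution whose time-derivative equals $M^+$ at its spatial maximum at every time; the strong maximum principle for $\partial_t w=L_t w$ forces $w\equiv M^+$ there, and symmetrically $w\equiv M^-$, so $M^+=M^-$, and the integral identity pins this common value to $0$. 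Hence $w_t\to 0$ in $C^0$, and, by interpolation with the uniform higher-order bounds, in every $C^k$; the corresponding $C^\infty$ subsequential limit $h_\infty=e^{-u_\infty}h_0$ therefore satisfies $\theta(F_{h_\infty})=\hat\theta$, i.e. is a dHYM metric. Finally, once $t$ is large $h_t$ is $C^2$-close to $h_\infty$, so the subsequential convergence upgrades to genuine $C^\infty$ convergence of $h_t$ with an exponential rate — either by invoking the stability analysis of the present paper, or directly by linearizing \eqref{mainequation} at $h_\infty$ and using that the self-adjoint elliptic linearized operator has a spectral gap above $0$ on functions of vanishing average while the dHYM metric is unique in its cohomology class up to scaling (by the comparison principle in the hypercritical regime).
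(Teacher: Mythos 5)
A point of calibration first: the paper does not prove this statement at all --- it is quoted as background (Remark 7.4 of \cite{CJY}) --- so your proposal can only be compared with the strategy of \cite{CJY} and with the machinery this paper actually develops for its own stability theorem. With that understood, your outline is essentially the expected proof and I see no fatal error: the maximum principle applied to $\dot u=\theta(F_{u_t})-\hat\theta$ (which satisfies $\partial_t\dot u=\Delta_\eta\dot u$, as in Section \ref{section-expon}) preserves the hypercritical range, hence $F_{u_t}>0$, concavity of $\theta$, and uniform parabolicity once $C^2$ is controlled; the $\mathcal{C}$-subsolution estimates of \cite{CJY} plus Evans--Krylov and Schauder give uniform $C^\infty$ bounds; and the identity $\int_X\rho_t\sin\bigl(\theta(F_{u_t})-\hat\theta\bigr)\omega^n=0$ correctly forces $\min_X\dot u\le 0\le\max_X\dot u$. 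Two places are thinner than the rest. (i) The $C^0$ and $C^2$ estimates of \cite{CJY} are elliptic; transporting them to the flow is the real content hidden in your phrase ``adapting to the parabolic setting'' (parabolic $\mathcal{C}$-subsolution machinery), and while the uniform bound on $\dot u$ makes this work, it is a step that needs proof, not a remark. (ii) Your convergence mechanism (time-translated eternal limit plus strong maximum principle) yields $\dot u_t\to 0$ with no rate, and $\dot u_t\to0$ by itself does not give convergence of $u_t$; you close this correctly by invoking Theorem \ref{thm-stability} (legitimate and non-circular here, since its proof is independent of \cite{CJY}), but note the paper's own route in Section \ref{section-expon} is different and more economical: a Li--Yau/Cao-type Harnack inequality (\cite{LY}, \cite{Cao}) for the linear equation satisfied by $\dot u$ gives exponential decay of $\operatorname{osc}_X\dot u$ directly, which pins the limit and produces the rate in one stroke. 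I would delete or justify your parenthetical appeal to a ``self-adjoint'' linearized operator with a spectral gap: $\Delta_\eta$ is not self-adjoint with respect to an obvious measure, and the Harnack or stability arguments above are the clean ways to finish.
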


In \cite{T2019}, Takahashi proved the collapsing behavior of the line bundle \replaced{MCF}{mean curvature flow} on K\"ahler surfaces with boundary conditions on some cohomology class.

Another desirable property of a \replaced{geometric}{geometry} flow is \added{the} stability of stationary points.  That is, if the initial date is sufficiently close to one stationary point, then the flow exists for long time and converges to the stationary point. The stability result gives more evidences that the method of flows will work to find the stationary point. There have been a lot of stability results for other geometric flow\added{s}. In \cite{CL}, Chen and Li gave some stability results of K\"ahler-Ricci flow with respect to the deformation of the underlying complex structures under the assumption $c_1(M)>0$ and no non-zero holomorphic vector fields. In \cite{Z}, Zhu proved that the K\"ahler-Ricci flow converges to a K\"ahler-Einstein metric in $C^\infty$ sense if the initial metric is very close to a K\"ahler-Einstein metric on a Fano manifold. \added{In \cite{LW}, }Lotay\replaced{-}{ and }Wei \deleted{in \cite{LW} }proved that the torsion-free $G_2$ structures are dynamically stable along the Laplacian flow for closed $G_2$ structures. \replaced{Guedj-Lu-Zeriahi}{Guedj, Lu and Zeriahi} considered the stability of solutions to complex Monge-Amp\`ere flows in \cite{GLZ}.

We consider the stability of the line bundle MCF in this paper. That is,  \added{if} we assume \added{that} there exists one dHYM metric on holomorphic line bundle over a compact K\"ahler manifold and the initial metric is $C^2$ close to this dHYM metric, then the flow will admit long-time solution and exponentially converge to this given dHYM metric.

\begin{thm}
\label{thm-stability}
Let $(X,\omega)$ be a \deleted{connected}compact K\"ahler manifold of complex dimension $n$ and $L$ be a holomorphic line bundle over $X$. Assume $\hat h$ is a dHYM metric on $L$ and $h(t)=e^{-u_t}\hat h$ satisfy the line bundle \replaced{MCF}{mean curvature flow}, i.e,
\[
\frac{d}{dt}u\added{_t}=\theta\added{(F_{u_t})}-\hat\theta.
\]
There exists a constant $\delta_0>0$ such that if \added{the smooth initial data $u_0$ satisfies} $||D^2 u_0||_{L^\infty}\leq \delta_0 $, then the line bundle \replaced{MCF}{mean curvature flow} exists for long time and $u_t$ converges to a constant exponentially.
\end{thm}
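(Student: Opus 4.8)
\smallskip
\noindent\emph{Proof strategy.} The plan is a continuity (bootstrap) argument controlling $N(t):=\sup_X|\partial\bar\partial u_t|$. The key point is that near the dHYM metric $\hat h$, where $N(t)$ is small, the flow is not merely uniformly parabolic but lies in the \emph{small--Hessian regime}: the fully nonlinear operator $\theta$ is then a small perturbation of its linearization $\mathcal L_0(\cdot)=D\theta|_{F_{\hat h}}(\partial\bar\partial\,\cdot)$ at $\hat h$, a fixed uniformly elliptic operator of the form $a_0^{p\bar q}\partial_p\partial_{\bar q}$ with no lower order terms. Consequently the second order estimate (which for the dHYM equation normally requires concavity of $\theta$ and Evans--Krylov) is \emph{not} the difficulty here: standard perturbative Schauder/$L^p$ theory for equations close to linear yields all higher order estimates. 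First I would fix $\varepsilon_0>0$, depending only on $(X,\omega,\hat h)$, so small that on $\{|\partial\bar\partial u|\le\varepsilon_0\}$ the flow is uniformly parabolic with controlled ellipticity and $\theta$ uniformly smooth, and choose $\delta_0\ll\varepsilon_0$ later. Using short--time existence (as in the cited work of Jacob--Yau) and parabolic smoothing for $\partial_t u=\theta(F_{\hat h}+\partial\bar\partial u)-\hat\theta$ — whose right--hand side vanishes identically when $\partial\bar\partial u\equiv0$, so the estimates are scale invariant in $u$ modulo constants — one obtains universal $\tau_0,C_0>0$ with: $\|\partial\bar\partial u_0\|_{L^\infty}\le\delta_0$ implies the flow exists on $[0,\tau_0]$ with $N(t)\le C_0\delta_0$ there and $\|\partial\bar\partial u_{\tau_0}\|_{C^{2,\alpha}}\le C_0\delta_0$; after shifting time so that $u_{\tau_0}$ is the initial datum we may assume the Hessian is small in $C^{2,\alpha}$, which disposes of all $t\to0^+$ technicalities below. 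Set $T^*:=\sup\{T:\text{ the flow exists on }[0,T]\text{ with }N(t)\le\varepsilon_0\}>0$; everything below happens on $[0,T^*)$, and the aim is $T^*=\infty$ with exponential decay of $N$.

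On $[0,T^*)$, differentiating the flow and applying interior--in--time parabolic Schauder/$L^p$ estimates in this perturbative regime gives uniform bounds on all space derivatives of $u_t$, in particular $\|\partial\bar\partial u_t\|_{C^\alpha(X)}\le\Lambda\varepsilon_0$. The heart of the matter is the decay of $\dot u_t=\theta(F_{u_t})-\hat\theta$. Differentiating $\partial_t u=\theta(F_u)-\hat\theta$ once more, $\dot u_t$ solves the \emph{linear} uniformly parabolic equation $\partial_t\dot u_t=\mathcal L_t\dot u_t$ with $\mathcal L_t(\cdot)=D\theta|_{F_{u_t}}(\partial\bar\partial\,\cdot)=a_t^{p\bar q}\partial_p\partial_{\bar q}(\cdot)$, which has no lower order term; so the maximum principle shows $\operatorname{osc}_X\dot u_t$ and $\sup_X|\dot u_t|$ are non--increasing, and the Krylov--Safonov Harnack inequality for nondivergence equations — with ellipticity uniform on $[0,T^*)$ — upgrades this to \emph{geometric} oscillation decay $\operatorname{osc}_X\dot u_t\le Ce^{-\mu t}\operatorname{osc}_X\dot u_0\le C\delta_0 e^{-\mu t}$. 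I expect this to be the main obstacle: monotonicity and the Harnack inequality do not by themselves force $\sup_X|\dot u_t|\to0$, since a priori $\dot u_t$ could converge uniformly to a nonzero constant. This is overcome using the variational structure. Since $Z_{L,[\omega]}$ depends only on $c_1(L)$ and $[\omega]$, $\int_X\zeta_t\,\tfrac{\omega^n}{n!}=Z_{L,[\omega]}$ along the flow, and as $\hat\theta=\arg Z_{L,[\omega]}$ this gives the conserved identity
\[
0=\IM\!\Big(e^{-\im\hat\theta}\!\int_X\zeta_t\,\tfrac{\omega^n}{n!}\Big)=\int_X|\zeta_t|\,\sin\!\big(\theta(F_{u_t})-\hat\theta\big)\tfrac{\omega^n}{n!}=\int_X|\zeta_t|\,\sin(\dot u_t)\,\tfrac{\omega^n}{n!},
\]
where $|\zeta_t|$ lies between two positive constants because $\partial\bar\partial u_t$ is small (indeed $|\zeta_t|=\prod_j(1+\lambda_j^2)^{1/2}\ge1$). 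With $d\nu_t:=|\zeta_t|\tfrac{\omega^n}{n!}$ and $|\dot u_t-\sin\dot u_t|\le\tfrac16|\dot u_t|^3$, the $\nu_t$--mean $I_t:=\nu_t(X)^{-1}\!\int_X\dot u_t\,d\nu_t$ satisfies $|I_t|\le\tfrac16\|\dot u_t\|_{L^\infty}^3$; since $I_t\in[\min_X\dot u_t,\max_X\dot u_t]$,
\[
\|\dot u_t\|_{L^\infty}\le|I_t|+\operatorname{osc}_X\dot u_t\le\tfrac16\|\dot u_t\|_{L^\infty}^3+C\delta_0 e^{-\mu t},
\]
and as $\|\dot u_t\|_{L^\infty}\le\|\dot u_0\|_{L^\infty}\le C\delta_0$ is small, the cubic term is absorbed and $\|\dot u_t\|_{L^\infty}=\|\theta(F_{u_t})-\hat\theta\|_{L^\infty}\le C_1\delta_0 e^{-\mu t}$ on $[0,T^*)$.

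To finish, I would bootstrap. The equation $\partial_t\dot u_t=\mathcal L_t\dot u_t$ is linear and homogeneous in $\dot u_t$ with coefficients uniformly bounded in every $C^k$, so interior--in--time parabolic Schauder promotes the small $C^0$ bound to $\|\dot u_t\|_{C^k(X)}\le C_k\delta_0 e^{-\mu t}$ for all $k$ (and $t\ge1$). Then $\|\partial_t\partial\bar\partial u_t\|_{C^0}=\|\partial\bar\partial\dot u_t\|_{C^0}\le C\delta_0 e^{-\mu t}$, and integrating (and tracking constants through the short initial segment) gives $N(t)\le C_2\delta_0$ for all $t$, which is $<\varepsilon_0/2$ once $\delta_0$ is small; hence $T^*=\infty$ by openness and closedness. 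The exponential decay of all $C^k$ norms of $\dot u_t=\partial_t u_t$ then forces $u_t\to u_\infty$ in $C^\infty(X)$ exponentially fast, with $\theta(F_{u_\infty})=\hat\theta$ and $\|\partial\bar\partial u_\infty\|_{L^\infty}\le\varepsilon_0$. Finally $0=\theta(F_{u_\infty})-\theta(F_{\hat h})=\big(\int_0^1 D\theta|_{F_{\hat h}+s\partial\bar\partial u_\infty}\,ds\big)^{p\bar q}\partial_p\partial_{\bar q}u_\infty$, and the bracketed coefficient matrix is uniformly positive definite because $\partial\bar\partial u_\infty$ is small, so by the maximum principle $u_\infty$ is constant. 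Thus $u_t$ converges exponentially in $C^\infty$ to a constant, which is the assertion.
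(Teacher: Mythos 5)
Your overall architecture (keep $\|\partial\bar\partial u_t\|_{L^\infty}$ small, prove exponential decay of $\dot u_t$ via a Harnack-type oscillation estimate, then integrate) parallels the paper, and two of your ingredients are genuinely nice alternatives: using the conserved quantity $\IM\big(e^{-\im\hat\theta}\int_X\zeta_t\,\omega^n/n!\big)=0$ to rule out drift of $\dot u_t$ to a nonzero constant (the paper instead works with $\tilde u=u-\mathrm{avg}(u)$ and only controls $\dot{\tilde u}$), and invoking Krylov--Safonov for the linear equation $\partial_t\dot u=\eta^{p\bar q}\partial_p\partial_{\bar q}\dot u$, which needs only measurable coefficients, where the paper proves a Li--Yau/Cao differential Harnack by hand.

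However, there is a genuine gap at the step you dismiss as routine: the claim that, in the small-Hessian regime, ``standard perturbative Schauder/$L^p$ theory'' yields uniform bounds on all space derivatives of $u_t$ on $[0,T^*)$. Smallness of $\|\partial\bar\partial u_t\|_{L^\infty}$ only makes the coefficients $\eta^{p\bar q}$ (equivalently $D\theta|_{F_{u_t}}$) close to the smooth coefficients $D\theta|_{F_{\hat h}}$ \emph{in $L^\infty$}; it gives no modulus of continuity for them. Parabolic Schauder requires H\"older coefficients and Calder\'on--Zygmund/$L^p$ theory requires (at least) VMO coefficients, so neither applies off the shelf, and Evans--Krylov is unavailable precisely because $\theta$ is not concave here --- this is the difficulty the paper's Section~5 is built to circumvent, via a parabolic Calabi-type estimate for $\Gamma=|\nabla\bar\nabla\nabla u|^2_\omega$ (whose proof in turn uses the preserved smallness of $D^2u$ established by the maximum-principle argument of Section~4, which crucially exploits the dHYM identities \eqref{eqn-1st-derivative}--\eqref{eqn-2nd-derivative2} for $\hat F$). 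To make your step rigorous you would need a genuinely nontrivial input such as Savin-type ``small perturbation'' regularity or Caffarelli-style perturbation theory, in parabolic form and with constants uniform in time, stated and verified for this operator --- not standard Schauder. The gap propagates: your exponential $C^k$-decay of $\dot u_t$ is obtained from Schauder applied to $\partial_t\dot u=\mathcal L_t\dot u$ assuming the coefficients are ``uniformly bounded in every $C^k$'', i.e.\ assuming uniform $C^{k+2}$ bounds on $u_t$, which is exactly what has not been proved; and the closing of your continuity argument ($N(t)\le C_2\delta_0$ by integrating $\|\partial\bar\partial\dot u_t\|_{C^0}$) rests on the same unproved bounds, so the bootstrap is circular as written. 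By contrast, the paper proves preservation of the small Hessian directly (Theorem~\ref{thm-smallhessian}) and then the third-order estimate, after which Schauder legitimately applies because $\eta$ is then uniformly $C^\alpha$.
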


We do not need any assumption on the phase $\hat \theta$ and the positivity of $F\replaced{_{u_t}}{(t)}$ which are crucial to guarantee the concavity of the operator $\theta(F)$. The method to prove Theorem \ref{thm-stability} is to get uniform estimates of $u_t$. The main step in the proof of this theorem is to obtain that the smallness of $D^2u\added{_t}$ is preserved along the line bundle MCF. As an application of this smallness, we get a uniform estimate for $\nabla \bar \nabla \nabla u_t$ \added{by some parabolic Calabi-type computation}. We should mention that we can not apply the Evans-Krylov theory \added{to get $C^{2,\alpha}$ estimate} here as in \cite{JY}, since we do not have the concavity condition on the operator \added{$\theta$} here. Higher order uniform estimates \added{along the line bundle MCF} are obtained by the standard parabolic Schauder estimate.

We will organize this paper as follows. In Section \ref{section-2}, we review \added{some basics on} the dHYM metrics and \replaced{give}{list} some useful notations and equations. In Section \ref{section-3}, we compute the evolution equations for $u$ and the derivatives of $u$ up to order $2$ along \added {the} line bundle \replaced{MCF}{mean curvature flow}. In Section \ref{section-4}, we \replaced{obtain}{proved} the smallness of $D^2u_t$ \deleted{is preserved}along the line bundle MCF. In Section \ref{section-5}, we prove the first part of Theorem \ref{thm-stability}, i.e. the long-time existence and convergence of \added{the} line bundle \replaced{MCF}{mean curvature flow} \added{subsequently}. In Section \ref{section-expon}, we prove the exponential convergence of \added{the} line bundle MCF which is the second part of Theorem \ref{thm-stability}.

\section{Preliminaries}
\label{section-2}
Let $(X,\omega\deleted{,J})$ be a compact K\"ahler manifold of complex dimension $n$ where\deleted{$J$ is a fixed complex structure and} $\omega$ is a \added{given} K\"ahler form. We denote $(L,h)$ to be a holomorphic line bundle over $X$ and $F$ to be the Chern curvature with respect to the Hermitian metric $h$ on $L$. In local coordinates, we write \[
\omega=\frac{\sqrt{-1}}{2}g_{i\bar j} dz^i\wedge d\bar z^j
\]
and
\[
F=\frac{1}{2} F_{i\bar j} dz^i\wedge d\bar z^j=-\frac{1}{2}\partial_i\partial_{\bar j} \log (h) dz^i\wedge d\bar z^j.
\]
Since $\omega$ is K\"ahler, we have the following second Bianchi equality
\[
\begin{split}
F_{i\bar j,k}&=-\partial_k\partial_{\bar j}\partial_{i}\log(h)+ \Gamma^{s}_{ik} F_{s \bar j}\\
&=-\partial_i\partial_{\bar j}\partial_{k}\log(h)+ \Gamma^{s}_{ki} F_{s \bar j}=F_{k\bar j,i},
\end{split}
\]
i.e. $F_{i\bar j,k}=F_{k\bar j,i}$ where ``$,k$'' is covariant derivative with respect to $\omega$.

Using notations above, we introduce a Hermitian (usually not K\"ahler) metric \added{$\eta$ on $X$} and an endomorphism $K$ of $T^{1,0}(X)$ that are defined by
\begin{equation}
\label{eqn-eta-F}
\eta=\frac{\sqrt{-1}}{2}\eta_{\bar{k}j}dz^j\wedge d\bar z^k
\end{equation}
and
\[
K:=\omega^{-1}F=g^{i\bar j}F_{k\bar j} dz^i\otimes \frac{\partial }{\partial z^k}
\]
where
\[ \eta_{\bar{k}j}=g_{\bar{k}j}+F_{\bar{k}l}g^{l\bar{m}}F_{\bar{m}j}.
\]
Then the complex-valued $(n,n)$-form $(\omega-F)^n$ can be locally written as
\begin{eqnarray*}
(\omega-F)^n&=&n!\det({g_{\bar{k}j}+\sqrt{-1}F_{\bar{k}j}})(\frac{\replaced{\sqrt{-1}}{i}}{2})^n dz^1\wedge d\bar{z}^1\wedge\cdots\wedge dz^n\wedge d\bar{z}^n\\
&=& \det(g^{j\bar{k}})\det({g_{\bar{k}j}+\sqrt{-1}F_{\bar{k}j}})\omega^n\\
&=&\det{(I+\sqrt{-1}K)}\omega^n.
\end{eqnarray*}
Therefore the complex function $\zeta$ can be expressed as
\begin{equation}\label{zeta}
\zeta=\det(I+\sqrt{-1}K).
\end{equation}
Using this expression, we can get the first variation of $\theta(F)$ (cf. Lemma 3.3 in \cite{JY})
\begin{eqnarray*}
\delta\theta(F)=\operatorname{\mathop{Tr}}((I+K^2)^{-1}\delta K)
\end{eqnarray*}
where the endomorphism $I+K^2$ \added{of $T^{1,0}X$} can be expressed locally as
\[I+K^2=g^{p\bar{q}}\eta_{\bar{q}l}\frac{\partial}{\partial z^p}\otimes dz^l.
\]
Thus we obtain the following formula on the derivative of $\theta(F)$
\begin{equation}\label{deltatheta}
\begin{split}
&\partial_i\theta=\operatorname{Tr}((I+ K^2)^{-1}\nabla_i  K)\\
=& \eta^{p\bar{q}}g_{\bar{q}l}\nabla_i(g^{l\bar{m}}  F_{\bar{m}p})\\
=& \eta^{p\bar{q}}\nabla_i F_{\bar{q}p}= \eta^{p\bar q} F_{p\bar q,i}.
\end{split}
\end{equation}

Now suppose $\hat h$ the dHYM metric on $L$ and its Chern curvature is denoted by $\hat F$. Since $\hat h$ satisfies the equation (\ref{eqn-dHYM-angle}), $\theta(\hat F)=\hat\theta$
%\added{\footnote{We can choose the phase $\hat \theta$ in many different ways. In this paper, we choose
%\[
%\hat \theta=\sum_{i=1}^n \hat \lambda_i(\hat F).
%\]
%And it is easy to see if we choose a different phase $\hat\theta+2k\pi$, then the solution of line bundle MCF with $\hat \theta+2k\pi$ differs from the original one by $2k\pi t$ which does not change our stability result except that the function $u_t$ converges to $2k\pi t$ instead of a constant.}}
 is a constant. Taking \deleted{the } derivative\added{s} of $\theta(\hat F)$,  we have the following two equalities
\begin{equation}
\label{eqn-1st-derivative}
\begin{split}
0=\operatorname{Tr}((I+\hat K^2)^{-1}\nabla_i \hat K)=\hat \eta^{p\bar q}\hat F_{p\bar q,i},\\
0=\operatorname{Tr}((I+\hat K^2)^{-1}\nabla\replaced{_{\bar j}}{_i} \hat K)=\hat \eta^{p\bar q}\hat F_{p\bar q,\bar j}
\end{split}
\end{equation}
where $\{\hat \eta^{p\bar q}\}$ is the inverse matrix of $\{\hat \eta_{p\bar{q}}\}$. Taking derivatives again on the both sides of \eqref{eqn-1st-derivative}, we obtain the following two equalities for second order derivatives
\begin{equation}
\label{eqn-2nd-derivative}
\begin{split}
  \hat \eta^{p\bar q}\hat F_{p\bar q, \bar ji }&=-\hat \eta^{p\bar q}_{\mathrel{\phantom{i\bar j}},i} \hat F_{p\bar q,\bar j}\\
  &=\hat \eta^{p\bar t}\hat \eta^{s\bar q} \hat \eta_{s\bar t,i} \hat F_{p\bar q,\bar j}\\
  &=\hat \eta^{p\bar t}\hat \eta^{s\bar q} \hat F_{p\bar q,\bar j} g^{a\bar b}(\hat F_{s\bar b,i}\hat F_{a \bar t}+ \hat F_{s\bar b}\hat F_{a\bar t,i})
\end{split}
\end{equation}
and
\begin{equation}
\label{eqn-2nd-derivative2}
\begin{split}
  \hat \eta^{p\bar q}\hat F_{p\bar q, \bar j\bar i }&=-\hat \eta^{p\bar q}_{\mathrel{\phantom{i\bar j}},\bar i} \hat F_{p\bar q,\bar j}\\
  &=\hat \eta^{p\bar t}\hat \eta^{s\bar q} \hat \eta_{s\bar t,\bar i} \hat F_{p\bar q,\bar j}\\
  &=\hat \eta^{p\bar t}\hat \eta^{s\bar q} \hat F_{p\bar q,\bar j} g^{a\bar b}(\hat F_{s\bar b,\bar i}\hat F_{a \bar t}+ \hat F_{s\bar b}\hat F_{a\bar t,\bar i}).
\end{split}
\end{equation}
These equalities will play significant role in our proof of the main theorem.

A very special case is that we take $[F]=c[\omega]$ for some constant $c$. Then we have a trivial \replaced{dHYM}{deformed Hermitian-Yang-Mills} metric $\hat F=c\omega$. In this case, we have
\[
\hat F_{p\bar q,i}=0, \hat F_{p\bar q,\bar i}=0, \hat F_{p\bar q, i\bar j}=0 \text{ and }\hat F_{p\bar q, \bar i\bar j}=0.
\]
This will make our proof much easier as discussed in Section \ref{section-3}.

%\begin{dy}[Line-bundle MCF \cite{JY}]
%Assume $(X, \omega)$ is a K\"ahler manifold and $(L, h_0)$ is the holomorphic line bundle on $X$. Suppose $u(t)$ is a function on $X$, i.e, $u: X\times [0,\infty)\to \mathbb{R}$.
%A family of the metrics $h(t)=e^{-u(t)} h_0$ on the line bundle $L$ is said to be a line-bundle MCF if and only if it satisfies that
%  \begin{equation}
%  \label{eqn-LMCF}
%  \begin{cases}
%    \frac{\partial u}{\partial t}&=\theta-\hat \theta\\
%    u(,0)&=u_0
%  \end{cases}
%  \end{equation}
%where $\theta,\hat\theta$ are defined in the introduction.
%\end{dy}

\section{Evolution inequalities along the line bundle mean curvature flow}
\label{section-3}

In this section,  we assume that $\hat h$ is \replaced{a given}{the} \replaced{dHYM}{deformed Hermitian-Yang-Mills} metric and consider the line bundle MCF  \eqref{mainequation} with the initial metric $h_0=e^{-u_0}\hat h$. We will give the evolution equations of some quantities related to $\replaced{u_t(x)}{u(x,t)}$ along the line bundle MCF. We also show some inequalities of these quantities. \added{For convenience, we will omit the subscript $t$ of $u_t$ if there is no confusion.}

In the following of this paper, we always use the following second order operator
\[
\Delta_\eta=\eta^{p\bar{q}}\partial_p\partial_{\bar{q}}
\]
where $\eta$ is the Hermitian metric defined in \eqref{eqn-eta-F}. We also remark that $\Delta_\eta$ is the linearization operator of Equation \eqref{eqn-dHYM-angle}.

At first, we compute the evolution of $u^2$ along the line bundle MCF.
\begin{yl}
\label{lemma-evo-0}
The evolution equation of $u^2$ along the line bundle MCF is given by
\begin{equation}
\label{eqn-evo-0}
(\frac{\partial}{\partial t}-\Delta_\eta)u^2=2u(\theta(F\added{_u})-\hat\theta-\Delta_\eta u)-2\eta^{p\bar{q}}u_p u_{\bar{q}}.
\end{equation}
\end{yl}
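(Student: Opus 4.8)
The plan is a direct computation carried out at a fixed time $t$: differentiate $u^2$ along the flow, expand $\Delta_\eta(u^2)$ with the Leibniz rule, and subtract. No real analysis is involved — this lemma is essentially a bookkeeping warm-up — so I would simply assemble the two pieces.

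First I would use the flow equation \eqref{mainequation}, which reads $\frac{\partial}{\partial t}u=\theta(F_u)-\hat\theta$, to obtain $\frac{\partial}{\partial t}(u^2)=2u\,\frac{\partial}{\partial t}u=2u(\theta(F_u)-\hat\theta)$. For the spatial term, since $u$ is a scalar function on the compact K\"ahler manifold $(X,\omega)$ the ordinary and the covariant mixed second derivatives agree, $\partial_p\partial_{\bar q}u=\nabla_p\nabla_{\bar q}u=u_{p\bar q}$, so applying the Leibniz rule twice gives $\partial_p\partial_{\bar q}(u^2)=2u_p u_{\bar q}+2u\,u_{p\bar q}$; contracting with $\eta^{p\bar q}$ yields $\Delta_\eta(u^2)=2\eta^{p\bar q}u_p u_{\bar q}+2u\,\Delta_\eta u$. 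Subtracting these two identities and factoring $2u$ out of the first two resulting terms produces exactly \eqref{eqn-evo-0}.

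The single point that deserves a comment is that $\eta$, and hence the operator $\Delta_\eta$, varies with $t$ through $F_u=F_{u_t}$; however, \eqref{eqn-evo-0} is an identity at each fixed time with $\eta$ on both sides evaluated at that same time, so this time-dependence contributes no extra terms. I do not expect any genuine obstacle: the purpose of the lemma is to pin down the computational conventions that will be reused in the more substantial evolution inequalities for $|\nabla u|^2$ and $|\nabla\bar\nabla u|^2$ obtained later in this section.
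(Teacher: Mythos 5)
Your computation is correct and coincides with the paper's own proof: both simply apply the flow equation to get $\frac{\partial}{\partial t}u^2=2u(\theta(F_u)-\hat\theta)$, expand $\Delta_\eta u^2=2u\Delta_\eta u+2\eta^{p\bar q}u_pu_{\bar q}$ by the Leibniz rule, and subtract. Your remark about the time-dependence of $\eta$ being harmless is a sensible (if unstated in the paper) clarification; nothing further is needed.
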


\begin{proof}
According to the line bundle MCF \eqref{mainequation}, we get that
\[
\frac{\partial}{\partial t} u^2 =2u(\theta(F\added{_u})-\hat\theta)
\]
and
\[
\Delta_\eta u^2=2u\Delta_{\eta}u+2 \eta^{p\bar{q}}u_p u_{\bar{q}}.
\]
Thus we obtain the following equation
\[
(\frac{\partial}{\partial t}-\Delta)u^2=2u(\theta(F\added{_u})-\hat\theta-\Delta_\eta u)-2\eta^{p\bar{q}}u_p u_{\bar{q}}.\qedhere
\]
\end{proof}

We denote $\nabla$ to be the $(1,0)$-part of covariant derivative and $|\cdot|_\omega$ to be the norm with respect to $\omega$.
\begin{yl}
  \label{lemma-evo-1}
  The evolution equation of $|\nabla u|_\omega^2$ along the line bundle MCF is given by
\begin{equation}
\label{eqn-evo-1}
\begin{split}
&(\frac{\partial}{\partial t}-\Delta_\eta)|\nabla u|_\omega^2\\
=-&\eta^{p\bar{q}}g^{i\bar j} (u_{ip}u_{\bar j\bar q}+u_{i\bar {q}}u_{\bar j p})-2\eta^{p\bar{q}}g^{i\bar j}R_{p\bar{l}i\bar{q}}u_lu_{\bar{j}}+2\operatorname{\mathop{Re}}(\eta^{p\bar{q}}g^{i\bar j}\hat F_{p\bar{q},i}u_{\bar{j}}).
\end{split}
\end{equation}

\end{yl}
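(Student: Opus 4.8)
The plan is to compute $\frac{\partial}{\partial t}|\nabla u|_\omega^2$ and $\Delta_\eta|\nabla u|_\omega^2$ separately, starting from $|\nabla u|_\omega^2=g^{i\bar j}u_iu_{\bar j}$, and then subtract. The mechanism that makes the identity work is that $\Delta_\eta$ is precisely the linearization of the flow \eqref{mainequation}, so all terms of order $\nabla^3 u$ must cancel between the two computations.

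For the time derivative, differentiating \eqref{mainequation} in the direction $\partial_i$ and using the first variation formula \eqref{deltatheta} together with $F_{u,p\bar q}=\hat F_{p\bar q}+u_{p\bar q}$, one gets $\partial_tu_i=\nabla_i\theta(F_u)=\eta^{p\bar q}(\hat F_{p\bar q,i}+u_{p\bar q i})$, and conjugating (equivalently, differentiating in $\partial_{\bar j}$), $\partial_tu_{\bar j}=\eta^{p\bar q}(\hat F_{p\bar q,\bar j}+u_{p\bar q\bar j})$. Since the K\"ahler metric is parallel,
\[
\frac{\partial}{\partial t}|\nabla u|_\omega^2=\eta^{p\bar q}g^{i\bar j}\big[(\hat F_{p\bar q,i}+u_{p\bar q i})u_{\bar j}+u_i(\hat F_{p\bar q,\bar j}+u_{p\bar q\bar j})\big].
\]
For the spatial term, using $\nabla g=0$ and the Leibniz rule,
\[
\Delta_\eta|\nabla u|_\omega^2=\eta^{p\bar q}g^{i\bar j}\big[(\nabla_p\nabla_{\bar q}u_i)u_{\bar j}+u_i(\nabla_p\nabla_{\bar q}u_{\bar j})+u_{i\bar q}u_{\bar j p}+u_{ip}u_{\bar j\bar q}\big].
\]

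The key step is to commute covariant derivatives in the two third order terms via the K\"ahler curvature identities: two holomorphic (or two antiholomorphic) covariant derivatives commute on tensors of every type, whereas $[\nabla_p,\nabla_{\bar q}]$ applied to a one-form produces the Riemann tensor. This gives $\nabla_p\nabla_{\bar q}u_i=\nabla_i\nabla_{\bar q}u_p=u_{p\bar q i}$ with no curvature contribution, and $\nabla_p\nabla_{\bar q}u_{\bar j}=u_{p\bar q\bar j}+(\text{a curvature term linear in }u)$. Substituting into $(\frac{\partial}{\partial t}-\Delta_\eta)|\nabla u|_\omega^2$, the pieces $u_{p\bar q i}u_{\bar j}$ and $u_iu_{p\bar q\bar j}$ cancel against the matching pieces of the time derivative, and what remains is exactly: the two Hessian terms $-\eta^{p\bar q}g^{i\bar j}(u_{ip}u_{\bar j\bar q}+u_{i\bar q}u_{\bar j p})$ coming straight from the $\Delta_\eta$ expansion; the curvature term $-2\eta^{p\bar q}g^{i\bar j}R_{p\bar l i\bar q}u_lu_{\bar j}$, obtained from the commutator after invoking the K\"ahler symmetries $R_{i\bar jk\bar l}=R_{k\bar ji\bar l}=R_{i\bar lk\bar j}$; and $\eta^{p\bar q}g^{i\bar j}(\hat F_{p\bar q,i}u_{\bar j}+\hat F_{p\bar q,\bar j}u_i)$, which equals $2\operatorname{Re}(\eta^{p\bar q}g^{i\bar j}\hat F_{p\bar q,i}u_{\bar j})$ because $\omega$ and $\eta$ are Hermitian and $\hat F$ is real. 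This is \eqref{eqn-evo-1}.

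The only part demanding care is the bookkeeping: correctly commuting the third covariant derivatives and using the K\"ahler curvature symmetries to identify which commutator terms cancel and to cast the surviving one in the stated symmetric form; there is no analytic obstacle here. (In the trivial case $\hat F=c\omega$ one has $\hat F_{p\bar q,i}=0$, so the last term of \eqref{eqn-evo-1} drops out, consistently with the remark after \eqref{eqn-2nd-derivative2}.)
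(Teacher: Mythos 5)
Your proposal is correct and takes essentially the same route as the paper's proof: differentiate in $t$ using \eqref{mainequation}, \eqref{deltatheta} and $F_{p\bar q}=\hat F_{p\bar q}+u_{p\bar q}$, expand $\Delta_\eta|\nabla u|^2_\omega$ by the Leibniz rule, and cancel the third-order terms via the Ricci identity, leaving exactly the Hessian, curvature and $\hat F$ terms. The only (immaterial) difference is that you commute with the holomorphic derivative outermost, so the curvature commutator falls on the $u_{\bar j}$ factor rather than on $u_i$ as in the paper; note that in either ordering a single commutator produces the curvature term with coefficient one, i.e. $-\eta^{p\bar q}g^{i\bar j}R_{p\bar l i\bar q}u_lu_{\bar j}$, which is what the paper's own proof derives, whereas the factor $2$ appearing in the displayed statement \eqref{eqn-evo-1} does not follow from this computation.
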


\begin{proof}
Firstly, we deal with the derivative with respect to $t$,
\[
\begin{split}
&\frac{\partial}{\partial t} |\nabla u|_\omega^2\\
=& \frac{\partial}{\partial t} (g^{i\bar j} u_i u_{\bar j})\\
=& g^{i\bar j}(\frac{\partial u}{\partial t})_i u_{\bar{j}}+ g^{i\bar j}u_i(\frac{\partial u}{\partial t})_{\bar{j}} \\
=&g^{i\bar j}\theta_i u_{\bar{j}}+g^{i\bar j}\theta_{\bar{j}}u_i\\
=&g^{i\bar j}\eta^{p\bar{q}}F_{p\bar{q},i}u_{\bar{j}}+g^{i\bar j}\eta^{p\bar{q}} F_{p\bar{q},\bar{j}}u_{i}
\end{split}
\]
where we have used Equation \eqref{deltatheta} in the last ``=''. Since $F_{p\bar q}=\hat F_{p\bar q}+u_{p\bar q}$, we have
\[
\begin{split}
&\frac{\partial}{\partial t} |\nabla u|_\omega^2\\
=&g^{i\bar j}\eta^{p\bar{q}}u_{p\bar{q}i}u_{\bar{j}}+g^{i\bar j}\eta^{p\bar{q}} u_{p\bar{q}\bar{j}}u_{i}+g^{i\bar j}\eta^{p\bar{q}}\hat F_{p\bar{q},i}u_{\bar{j}}+g^{i\bar j}\eta^{p\bar{q}} \hat F_{p\bar{q},\bar{j}}u_{i}.
\end{split}
\]
Secondly, we deal with the ``$\Delta_\eta$''-part,
\begin{equation}
\label{eqn:3-1}
\begin{split}
&\Delta_{\eta}|\nabla u|_\omega^2\\
=&\eta^{p\bar{q}}(g^{i\bar j}u_i u_{\bar{j}})_{p\bar{q}}\\
=&\eta^{p\bar{q}}g^{i\bar j}(u_{ip}u_{\bar{j}\bar{q}}+u_{i\bar {q}}u_{p\bar{j}}+u_{ip\bar{q}}u_{\bar{j}}+u_iu_{\bar{j} p\bar{q}}).
\end{split}
\end{equation}
Applying the Ricci identity, we \replaced{can change orders of derivatives as follow}{have}
\[
\begin{split}
u_{ip\bar{q}}=&u_{pi\bar{q}}=u_{p\bar{q}i}+u_l R_{p\bar{l}i\bar{q}},\\
u_{\bar{i}p\bar{q}}=&u_{\bar{q}p\bar{i}}=u_{p\bar{q} \bar{i}}.
\end{split}
\]
Inserting these two equalities to Equation \eqref{eqn:3-1}, we obtain
\[
\begin{split}
&\Delta_{\eta}|\nabla u|_\omega^2\\
= \repl{+}&\eta^{p\bar{q}}g^{i\bar j}(u_{ip}u_{\bar{j}\bar{q}}+u_{i\bar {q}}u_{p\bar{j}})\\
+&\eta^{p\bar{q}}g^{i\bar j}u_{\bar{j}}(u_{p\bar{q}i}+u_l R_{p\bar{l}i\bar{q}})+\eta^{p\bar{q}}g^{i\bar j} u_iu_{p\bar{q}\bar{j}}.
\end{split}
\]
Therefore, we have the following evolution equation along the line bundle MCF
\[
\begin{split}
&(\frac{\partial}{\partial t}-\Delta_\eta)|\nabla u|_\omega^2\\
=-&\eta^{p\bar{q}}g^{i\bar j} (u_{ip}u_{\bar j\bar q}+u_{i\bar {q}}u_{\bar j p})-\eta^{p\bar{q}}g^{i\bar j} R_{p\bar{l}i\bar{q}}u_lu_{\bar{j}}+\eta^{p\bar{q}}g^{i\bar j}\hat F_{p\bar{q},i}u_{\bar{j}}+\eta^{p\bar{q}}g^{i\bar j} \hat F_{p\bar{q},\bar{j}}u_{i}.
\end{split}
\]
Then we finish the proof of the lemma.
\end{proof}

Next, let us consider the evolution equations of $|\nabla \bar \nabla u|_\omega^2$ and $|\nabla \nabla u|_\omega^2$ along the line bundle \replaced{MCF}{mean curvature flow}. For convenience, we first introduce some notations before \replaced{detailed computations}{that}
\[
\begin{split}
  \Theta&=|\nabla\bar \nabla u|_\omega^2\\
  \Theta'&=|\nabla \nabla u|_\omega^2\\
\end{split}
\]

\begin{yl}
The evolution equation of $\Theta$ along the line bundle \replaced{MCF}{mean curvature flow} is
\label{lemma-evo-2}
\begin{equation}
  \label{eqn-evo-2}
\begin{split}
&(\frac{\partial }{\partial t}-\Delta_{\eta})\Theta\\
=&-\eta^{k\bar l}g^{i\bar j}g^{p\bar q}(u_{i\bar lp}u_{k\bar j \bar q}+u_{i\bar l\bar q}u_{k\bar j p})-2\operatorname{\mathop{Re}}\left(g^{i\bar j}g^{k\bar l}\eta^{p\bar b}\eta^{a\bar q}\eta_{a\bar b,\bar l}F_{p\bar q ,i}u_{k\bar j}\right)\\
&+2\operatorname{\mathop{Re}}\left(g^{i\bar j}g^{k\bar l} \eta^{p\bar q} u_{k\bar j}(u_{a\bar q}R_{i\bar ap\bar l}-u_{a\bar l}R_{i\bar a p\bar q})\right)+2\operatorname{\mathop{Re}}\left(g^{i\bar j}g^{k\bar l} \eta^{p\bar q} u_{k\bar j}\hat F_{p\bar q{,} i\bar l}\right).
\end{split}
\end{equation}

\end{yl}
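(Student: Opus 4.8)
The plan is to compute $(\frac{\partial}{\partial t} - \Delta_\eta)\Theta$ directly, following the same strategy already used for Lemma \ref{lemma-evo-0} and Lemma \ref{lemma-evo-1}: differentiate $\Theta = g^{i\bar j}g^{k\bar l} u_{i\bar l} u_{k\bar j}$ separately in $t$ and under $\Delta_\eta$, and then reorganize using the Ricci identity and the structure of the flow. First I would handle the time derivative. Since $\frac{\partial}{\partial t} u_{i\bar l} = (\theta(F_u)-\hat\theta)_{i\bar l} = \partial_i\partial_{\bar l}\theta$, and from \eqref{deltatheta} we have $\partial_i\theta = \eta^{p\bar q}F_{p\bar q,i}$, I must differentiate this once more in $\bar l$. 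This produces a term $\eta^{p\bar q} F_{p\bar q, i\bar l}$ plus a term $\eta^{p\bar b}\eta^{a\bar q}\eta_{a\bar b,\bar l} F_{p\bar q,i}$ coming from differentiating $\eta^{p\bar q}$ (using $\partial_{\bar l}\eta^{p\bar q} = -\eta^{p\bar b}\eta^{a\bar q}\partial_{\bar l}\eta_{a\bar b}$). Splitting $F_{p\bar q} = \hat F_{p\bar q} + u_{p\bar q}$ turns $\eta^{p\bar q}F_{p\bar q,i\bar l}$ into $\eta^{p\bar q} u_{p\bar q i\bar l} + \eta^{p\bar q}\hat F_{p\bar q, i\bar l}$, and the latter is exactly the $\hat F$-term appearing on the right-hand side of \eqref{eqn-evo-2}.

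Next I would compute $\Delta_\eta\Theta = \eta^{p\bar q}\partial_p\partial_{\bar q}(g^{i\bar j}g^{k\bar l}u_{i\bar l}u_{k\bar j})$. Expanding the two derivatives by the product rule (with the metrics $g$ parallel) gives the ``good'' negative gradient terms $-\eta^{p\bar q}g^{i\bar j}g^{k\bar l}(u_{i\bar l p}u_{k\bar j\bar q} + u_{i\bar l\bar q}u_{k\bar j p})$ — note these come with the opposite sign once moved to the left — together with the second-derivative-on-one-factor terms $\eta^{p\bar q}g^{i\bar j}g^{k\bar l}(u_{i\bar l p\bar q}u_{k\bar j} + u_{i\bar l}u_{k\bar j p\bar q})$. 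The crux is then to commute derivatives in $u_{i\bar l p\bar q}$ so as to match the time-derivative term $\eta^{p\bar q}u_{p\bar q i\bar l}$. Repeated application of the Ricci identity on the Kähler manifold $(X,\omega)$ lets me write $u_{i\bar l p\bar q} = u_{p\bar q i\bar l} + (\text{curvature})\cdot(\text{second derivatives of } u)$; carefully tracking which index slots the curvature tensor $R_{i\bar a p\bar l}$ and $R_{i\bar a p\bar q}$ land in, and contracting against $g^{i\bar j}g^{k\bar l}u_{k\bar j}$, yields precisely the curvature terms $2\operatorname{Re}(g^{i\bar j}g^{k\bar l}\eta^{p\bar q}u_{k\bar j}(u_{a\bar q}R_{i\bar a p\bar l} - u_{a\bar l}R_{i\bar a p\bar q}))$ in the statement. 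The conjugate factor $u_{k\bar j p\bar q}$ is handled by the same computation applied to the conjugated indices, which after taking real parts accounts for the overall $2\operatorname{Re}(\cdots)$ structure.

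Finally I would collect all pieces: the gradient terms survive with the stated sign, the $\eta$-derivative term $-2\operatorname{Re}(g^{i\bar j}g^{k\bar l}\eta^{p\bar b}\eta^{a\bar q}\eta_{a\bar b,\bar l}F_{p\bar q,i}u_{k\bar j})$ comes from the time derivative (after symmetrizing), the curvature terms come from the commutators, and the $\hat F_{p\bar q,i\bar l}$ term comes from the $\hat F$-part of $F_{p\bar q,i\bar l}$; the $u_{p\bar q i\bar l}$ pieces from $\partial_t$ and from $\Delta_\eta\Theta$ cancel against each other. The main obstacle I expect is bookkeeping: keeping the holomorphic vs. antiholomorphic index placements straight through several applications of the Ricci identity, and correctly identifying which terms assemble into a real part versus cancel. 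A secondary subtlety is making sure the derivative of $\eta_{a\bar b}$ is expressed in the form $\eta_{a\bar b,\bar l}$ and that the substitution $F_{p\bar q} = \hat F_{p\bar q} + u_{p\bar q}$ is applied consistently so that no stray $u_{p\bar q,\bar l}$ terms are left floating — they must be absorbed into the $\eta_{a\bar b,\bar l}$ factor or recognized as part of $F_{p\bar q,i}$. Once the combinatorics are organized, no estimate is needed; this is a pure identity, so the proof is complete upon matching terms.
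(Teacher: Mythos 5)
Your proposal is correct and follows essentially the same route as the paper: differentiate $\Theta$ in $t$ using $\partial_i\theta=\eta^{p\bar q}F_{p\bar q,i}$ (producing the $\eta^{p\bar b}\eta^{a\bar q}\eta_{a\bar b,\bar l}F_{p\bar q,i}$ term and $\eta^{p\bar q}F_{p\bar q,i\bar l}$), expand $\Delta_\eta\Theta$ by the product rule, split $F=\hat F+u$, and convert $u_{p\bar qi\bar l}-u_{i\bar lp\bar q}$ into the curvature terms via the Ricci identity, with the conjugate factor giving the $2\operatorname{Re}$ structure. No gaps.
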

\begin{proof}
Firstly, we compute the ``$\frac{\partial}{\partial t}$''-part,
\[
\begin{split}
\frac{\partial}{\partial t}\Theta=&\repl{-}g^{i\bar j}g^{k\bar l} (\frac{\partial u}{\partial t})_{i\bar l}u_{k\bar j}+g^{i\bar j}g^{k\bar l} (\frac{\partial u}{\partial t})_{k\bar j}u_{i\bar l}  \\
=&\repl{-}g^{i\bar j}g^{k\bar l}\theta_{i\bar l}u_{k\bar j}+g^{i\bar j}g^{k\bar l}\theta_{{\bar j k}}u_{i\bar l}\\
=&\repl{-}g^{i\bar j}g^{k\bar l}(\eta^{p\bar q} F_{p\bar q, i})_{\bar l}u_{k\bar j}+g^{i\bar j}g^{k\bar l}(\eta^{p\bar q}F_{p\bar q, {\bar j}})_{{k}}u_{i\bar l}\\
=&-2\operatorname{\mathop{Re}}\left(g^{i\bar j}g^{k\bar l}\eta^{p\bar b}\eta^{a\bar q}\eta_{a\bar b,\bar l} F_{p\bar q,i}u_{k\bar j}\right)+2\operatorname{\mathop{Re}}\left(g^{i\bar j}g^{k\bar l} \eta^{p\bar q} F_{p\bar q,i\bar l} u_{k\bar j}\right).\\
%&-g^{i\bar j}g^{k\bar l}\eta^{p\bar b}\eta^{a\bar q}\eta_{a\bar b,{\bar j}} F_{p\bar q,{k}}u_{i\bar l}+g^{i\bar j}g^{k\bar l} \eta^{p\bar q} F_{p\bar q,{\bar j k}} u_{i\bar l}.\\
%=&-g^{i\bar j}g^{k\bar l}\eta^{p\bar b}\eta^{a\bar q}(F_{a\bar t,\bar l}g^{s\bar t}F_{s\bar b}+ F_{a\bar t}g^{s\bar t}F_{s\bar b,\bar l})F_{p\bar q i}u_{k\bar j}+g^{i\bar j}g^{k\bar l} \eta^{p\bar q}F_{p\bar qi\bar l} u_{k\bar j}\\
%&-g^{i\bar j}g^{k\bar l}\eta^{p\bar b}\eta^{a\bar q}(F_{a\bar t,\bar j}g^{s\bar t}F_{s\bar b}+ F_{a\bar t}g^{s\bar t}F_{s\bar b,\bar j})F_{p\bar q k}u_{i\bar l}+g^{i\bar j}g^{k\bar l} \eta^{p\bar q}F_{p\bar q k\bar j} u_{i\bar l}~~~~~~~~~({\rm by}~~~~~(\ref{eqn-eta-F})).\\
\end{split}
\]
Secondly, we compute the ``$\Delta_\eta$''-part,
\[
\begin{split}
\Delta_{\eta}\Theta=&\repl{+}\eta^{p\bar q}(g^{i\bar j}g^{k\bar l}u_{i\bar l}u_{k\bar j})_{p\bar q}\\
=&\repl{+}g^{i\bar j}g^{k\bar l}\eta^{p\bar q} \left(u_{i\bar l p}u_{k\bar j\bar q}+u_{i\bar l \bar q}u_{k\bar j p}\right)+2\operatorname{\mathop{Re}}\left(g^{i\bar j}g^{k\bar l}\eta^{p\bar q}u_{k\bar j}u_{i\bar l p\bar q}\right).
\end{split}
\]
Adding these equations together, we get the following evolution equation for $\Theta$,
\begin{equation}
\label{eqn:3-2}
\begin{split}
&(\frac{\partial }{\partial t}-\Delta_{\eta})\Theta\\
=&-2\operatorname{\mathop{Re}}\left(g^{i\bar j}g^{k\bar l}\eta^{p\bar b}\eta^{a\bar q}\eta_{a\bar b,\bar l}F_{p\bar q, i}u_{k\bar j}\right)+2\operatorname{\mathop{Re}}\left(g^{i\bar j}g^{k\bar l} \eta^{p\bar q} u_{k\bar j}(F_{p\bar q, i\bar l}-u_{i\bar l p\bar q})\right)\\
&-\eta^{k\bar l}g^{i\bar j}g^{p\bar q}(u_{i\bar lp}u_{k\bar j \bar q}+u_{i\bar l\bar q}u_{k\bar j p})\\
=&-2\operatorname{\mathop{Re}}\left(g^{i\bar j}g^{k\bar l}\eta^{p\bar b}\eta^{a\bar q}\eta_{a\bar b,\bar l}F_{p\bar q, i}u_{k\bar j}\right)+2\operatorname{\mathop{Re}}\left(g^{i\bar j}g^{k\bar l} \eta^{p\bar q} u_{k\bar j}(u_{p\bar qi\bar l}-u_{i\bar l p\bar q})\right)\\
&+2\operatorname{\mathop{Re}}\left(g^{i\bar j}g^{k\bar l} \eta^{p\bar q} u_{k\bar j}\hat F_{p\bar q,i\bar l}\right)-\eta^{k\bar l}g^{i\bar j}g^{p\bar q}(u_{i\bar lp}u_{k\bar j \bar q}+u_{i\bar l\bar q}u_{k\bar j p}).\\
%=&-g^{i\bar j}g^{k\bar l}\eta^{p\bar b}\eta^{a\bar q}\eta_{a\bar b,\bar l}F_{p\bar q, i}u_{k\bar j}-g^{i\bar j}g^{k\bar l}\eta^{p\bar b}\eta^{a\bar q}\eta_{a\bar b,k}F_{p\bar q, \bar j}u_{i\bar l}\\
%&+g^{i\bar j}g^{k\bar l} \eta^{p\bar q} u_{k\bar j}(u_{p\bar q i\bar l}-u_{i\bar l p\bar q})+g^{i\bar j}g^{k\bar l} \eta^{p\bar q} u_{k\bar j}\hat F_{p\bar q, i\bar l}\\
%&+g^{i\bar j}g^{k\bar l} \eta^{p\bar q} u_{i\bar l}(u_{p\bar q{\bar jk}}-u_{k\bar j p\bar q})+g^{i\bar j}g^{k\bar l} \eta^{p\bar q} u_{i\bar l}\hat F_{p\bar q,{\bar j k}}\\
%&-\eta^{k\bar l}g^{i\bar j}g^{p\bar q}(u_{i\bar lp}u_{k\bar j \bar q}+u_{i\bar l\bar q}u_{k\bar j p})
\end{split}
\end{equation}

At last, we deal with the forth order terms appeared above. Indeed, by the Ricci identity, we have the following formula
\[
%\begin{split}
u_{p\bar q i\bar l}-u_{i\bar lp\bar q}
%=&u_{i\bar q p\bar l}-u_{i\bar lp\bar q}\\
%=&u_{i\bar q\bar l p}+ u_{a\bar q}R_{i\bar ap\bar l}- u_{i\bar b} R_{b\bar q p\bar l}-u_{i\bar lp\bar q}\\
%=&u_{i\bar l\bar q p}+ u_{a\bar q}R_{i\bar ap\bar l}- u_{i\bar b} R_{b\bar q p\bar l}-u_{i\bar lp\bar q}\\
%=&u_{i\bar l p\bar q}+u_{a\bar l}R_{i\bar a\bar q p}-u_{i\bar b}R_{b\bar l \bar q p}+ u_{a\bar q}R_{i\bar ap\bar l}- u_{i\bar b} R_{b\bar q p\bar l}-u_{i\bar lp\bar q}\\
= u_{a\bar q}R_{i\bar ap\bar l}-u_{a\bar l}R_{i\bar a p\bar q}.
%\end{split}
\]
Inserting it into Equation \eqref{eqn:3-2}, we get the following equation
\[
\begin{split}
&(\frac{\partial }{\partial t}-\Delta_{\eta})\Theta\\
%=&-\eta^{k\bar l}g^{i\bar j}g^{p\bar q}(u_{i\bar lp}u_{k\bar j \bar q}+u_{i\bar l\bar q}u_{k\bar j p})-g^{i\bar j}g^{k\bar l}\eta^{p\bar b}\eta^{a\bar q}\eta_{a\bar b,\bar l}F_{p\bar q {,}i}u_{k\bar j}-g^{i\bar j}g^{k\bar l}\eta^{p\bar b}\eta^{a\bar q}\eta_{a\bar b,k}F_{p\bar q,\bar j}u_{i\bar l}\\
%&+g^{i\bar j}g^{k\bar l} \eta^{p\bar q} u_{k\bar j}(u_{a\bar q}R_{i\bar ap\bar l}-u_{a\bar l}R_{i\bar a p\bar q})+g^{i\bar j}g^{k\bar l} \eta^{p\bar q} u_{k\bar j}\hat F_{p\bar q{,} i\bar l}\\
%&+g^{i\bar j}g^{k\bar l} \eta^{p\bar q} u_{i\bar l}(u_{a\bar j}R_{p\bar a\bar qk}-u_{p\bar b}R_{b \bar j \bar qk})+g^{i\bar j}g^{k\bar l} \eta^{p\bar q} u_{i\bar l}\hat F_{p\bar q, \bar j k }\\
=&-\eta^{k\bar l}g^{i\bar j}g^{p\bar q}(u_{i\bar lp}u_{k\bar j \bar q}+u_{i\bar l\bar q}u_{k\bar j p})-2\operatorname{\mathop{Re}}(g^{i\bar j}g^{k\bar l}\eta^{p\bar b}\eta^{a\bar q}\eta_{a\bar b,\bar l}F_{p\bar q ,i}u_{k\bar j})\\
&+2\operatorname{\mathop{Re}}\left(g^{i\bar j}g^{k\bar l} \eta^{p\bar q} u_{k\bar j}(u_{a\bar q}R_{i\bar ap\bar l}-u_{a\bar l}R_{i\bar a p\bar q})\right)+2\operatorname{\mathop{Re}}\left(g^{i\bar j}g^{k\bar l} \eta^{p\bar q} u_{k\bar j}\hat F_{p\bar q{,} i\bar l}\right)
\end{split}
\]
which is the result desired.\qedhere
\end{proof}

By the same argument, we can get the evolution equations of $\Theta'=|\nabla\nabla u|_\omega^2$. For the completeness, we also list the detail here.

\begin{yl}
   The evolution equation of $\Theta'$ along the line bundle MCF is given by
\begin{equation}
\label{eqn-double der}
\begin{split}
&\repl{+}(\frac{\partial }{\partial t}-\Delta_{\eta})\Theta'\\
=&-2\operatorname{\mathop{Re}}(g^{i\bar j}g^{p\bar q}\eta^{k\bar b}\eta^{a\bar l} \eta_{a\bar b,p} F_{k\bar l,i}u_{\bar j\bar q})+2\operatorname{\mathop{Re}}(g^{i\bar j}g^{p\bar q}\eta^{k\bar l} \hat F_{k\bar l,ip}u_{\bar j\bar q})\\
&-2\operatorname{\mathop{Re}}\left(g^{i\bar j}g^{p\bar q} \eta^{k\bar l} u_{\bar j \bar q}(u_{ak}R_{i\bar a p\bar l}+u_{ia} R_{k\bar a p\bar l}+u_{ap}R_{i\bar a k\bar l}+u_{a}R_{i\bar a k\bar l,p})\right)\\
&-\eta^{k\bar l}g^{i\bar j} g^{p\bar q}( u_{i p k}u_{\bar j\bar q\bar l}+u_{i p\bar l}u_{\bar j \bar q k}).
\end{split}
\end{equation}
%where
%\[
%\begin{split}
%\tilde I_2 =&\repl{+}g^{i\bar j}g^{p\bar q}(-\eta^{k\bar b}\eta^{a\bar l} \eta_{a\bar b,p} \hat F_{k\bar l,i} +\eta^{k\bar l} \hat F_{k\bar l,ip})u_{\bar j\bar q}\\
%&+g^{i\bar j}g^{p\bar q}(-\eta^{k\bar b}\eta^{a\bar l} \eta_{a\bar b,\bar q} \hat F_{k\bar l,\bar j} +\eta^{k\bar l} \hat F_{k\bar l,\bar j \bar q})u_{ip}\\
%\end{split}
%\]
\end{yl}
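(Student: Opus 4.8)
The statement to prove is the evolution equation for $\Theta'=|\nabla\nabla u|_\omega^2$ along the line bundle MCF. The proof follows exactly the same pattern as Lemma \ref{lemma-evo-2} (the evolution of $\Theta$), which the paper explicitly acknowledges. Let me sketch the plan.

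\textbf{Plan.} The plan is to mimic the proof of Lemma~\ref{lemma-evo-2} step by step, now differentiating $u$ twice in $(1,0)$-directions rather than in mixed directions. Write $\Theta'=g^{i\bar j}g^{p\bar q}u_{ip}u_{\bar j\bar q}$ and split the operator into its time part and its $\Delta_\eta$ part.

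\textbf{Step 1 (time derivative).} Differentiating $\Theta'$ in $t$ and using $\partial_t u=\theta-\hat\theta$ together with formula \eqref{deltatheta}, I get $\partial_t\Theta' = 2\,\mathrm{Re}\big(g^{i\bar j}g^{p\bar q}(\eta^{k\bar l}F_{k\bar l,i})_{p}\,u_{\bar j\bar q}\big)$. Expanding the covariant derivative of $\eta^{k\bar l}$ via $\eta^{k\bar l}_{\ ,p}=-\eta^{k\bar b}\eta^{a\bar l}\eta_{a\bar b,p}$ produces the term $-2\,\mathrm{Re}(g^{i\bar j}g^{p\bar q}\eta^{k\bar b}\eta^{a\bar l}\eta_{a\bar b,p}F_{k\bar l,i}u_{\bar j\bar q})$ together with $2\,\mathrm{Re}(g^{i\bar j}g^{p\bar q}\eta^{k\bar l}F_{k\bar l,ip}u_{\bar j\bar q})$.

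\textbf{Step 2 ($\Delta_\eta$ part).} Applying $\Delta_\eta=\eta^{k\bar l}\partial_k\partial_{\bar l}$ to $\Theta'$ and using the Leibniz rule gives the nonnegative quadratic gradient terms $\eta^{k\bar l}g^{i\bar j}g^{p\bar q}(u_{ipk}u_{\bar j\bar q\bar l}+u_{ip\bar l}u_{\bar j\bar q k})$ plus the cross term $2\,\mathrm{Re}(g^{i\bar j}g^{p\bar q}\eta^{k\bar l}u_{\bar j\bar q}u_{ipk\bar l})$. Subtracting, the flow operator applied to $\Theta'$ equals
$-\eta^{k\bar l}g^{i\bar j}g^{p\bar q}(u_{ipk}u_{\bar j\bar q\bar l}+u_{ip\bar l}u_{\bar j\bar q k})$, plus the two first-line terms from Step~1, plus $2\,\mathrm{Re}\big(g^{i\bar j}g^{p\bar q}\eta^{k\bar l}u_{\bar j\bar q}(F_{k\bar l,ip}-u_{ipk\bar l})\big)$. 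Since $F_{k\bar l}=\hat F_{k\bar l}+u_{k\bar l}$, write $F_{k\bar l,ip}=\hat F_{k\bar l,ip}+u_{k\bar l ip}$, so the $\hat F$ term survives and I am left to handle $u_{k\bar l ip}-u_{ipk\bar l}$.

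\textbf{Step 3 (commuting derivatives).} This is the one genuinely fiddly step: commute the fourth covariant derivatives $u_{k\bar l ip}$ past to $u_{ipk\bar l}$ using the Ricci/curvature identities repeatedly (first bring the two holomorphic indices together, then the $\bar l$ past them, then reorder the leading holomorphic pair), picking up curvature terms. Because now \emph{three} of the four indices are of type $(1,0)$, the commutators involve $u$ contracted with $R$ in the four combinations listed, and one term where $R$ itself is differentiated, namely $u_a R_{i\bar a k\bar l,p}$; this accounts for $-2\,\mathrm{Re}(g^{i\bar j}g^{p\bar q}\eta^{k\bar l}u_{\bar j\bar q}(u_{ak}R_{i\bar ap\bar l}+u_{ia}R_{k\bar ap\bar l}+u_{ap}R_{i\bar ak\bar l}+u_a R_{i\bar ak\bar l,p}))$. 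The K\"ahler condition (symmetry $F_{i\bar j,k}=F_{k\bar j,i}$, full symmetry of $R$, and $\nabla\omega=0$) is what makes the bookkeeping close up. Assembling Steps 1--3 yields exactly \eqref{eqn-double der}. The main obstacle is purely organizational: keeping track of index placements and the sign conventions for the curvature tensor while commuting four derivatives, since unlike the $\Theta$ case there is no mixed-index symmetry to shortcut the reordering.
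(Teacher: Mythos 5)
Your proposal is correct and follows essentially the same route as the paper: split $\partial_t$ and $\Delta_\eta$, expand $\theta_{ip}$ via \eqref{deltatheta} with $\eta^{k\bar l}_{\ ,p}=-\eta^{k\bar b}\eta^{a\bar l}\eta_{a\bar b,p}$, substitute $F_{k\bar l,ip}=\hat F_{k\bar l,ip}+u_{k\bar l ip}$, and handle the resulting fourth-order commutator with the Ricci identity, which produces exactly the four curvature terms (including $u_aR_{i\bar a k\bar l,p}$) in \eqref{eqn-double der}.
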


\begin{proof}
Taking the derivatives of  $\Theta'$ about $t$, we obtain
\[
\begin{split}
\frac{\partial}{\partial t}\Theta'=&\repl{+}g^{i\bar j}g^{p\bar q}(\dot u_{i p}u_{ \bar j\bar q } +u_{i p}\dot u_{\bar j\bar q })\\
=&\repl{+}g^{i\bar j}g^{p\bar q}(\theta_{i p}u_{\bar j\bar q }+u_{i p}\theta_{\bar j\bar q})\\
=&\repl{+}g^{i\bar j}g^{p\bar q}(\eta^{k\bar l}_{\repl{k\bar l},p } F_{k\bar l,i} +\eta^{k\bar l} F_{k\bar l,ip})u_{\bar j\bar q}+ g^{i\bar j}g^{p\bar q}(\eta^{k\bar l}_{\repl{k\bar l},\bar q } F_{k\bar l,\bar j} +\eta^{k\bar l} F_{k\bar l,\bar j\bar q})u_{ip}\\
=&\repl{+}2\operatorname{\mathop{Re}}\left(g^{i\bar j}g^{p\bar q}(-\eta^{k\bar b}\eta^{a\bar l} \eta_{a\bar b,p} F_{k\bar l,i} +\eta^{k\bar l} F_{k\bar l,ip})u_{\bar j\bar q}\right).\\
%&+g^{i\bar j}g^{p\bar q}(-\eta^{k\bar b}\eta^{a\bar l} \eta_{a\bar b,\bar q} F_{k\bar l,\bar j} +\eta^{k\bar l} F_{k\bar l,\bar j \bar q})u_{ip}.
%&=2(\eta^{k\bar l}(\hat F)_{k\bar l i})_{j}u_{\bar i \bar j}+2(\eta^{k\bar l}u_{k\bar l  i})_{ j}u_{\bar i \bar j}+2(\eta^{k\bar l}(\hat F)_{k\bar l \bar i})_{\bar j}u_{ij}+2(\eta^{k\bar l}u_{k\bar l \bar i})_{\bar j}u_{ i j}\\
%&=-2\eta^{k\bar q}\eta^{p\bar l}\eta_{p\bar q, j}u_{k\bar l i}u_{\bar i\bar j}+ 2\eta^{k\bar l} u_{k\bar l i j} u_{\bar i \bar j}-2\eta^{k\bar q}\eta^{p\bar l}\eta_{p\bar q, \bar j}u_{k\bar l \bar i}u_{ i j}+ 2\eta^{k\bar l} u_{k\bar l\bar i\bar j} u_{ i  j}\\
%&=-2\eta^{k\bar q}\eta^{p\bar l}(F_{p\bar b, j} u_{b\bar q}+F_{p\bar b}F_{b\bar q,j})u_{k\bar l i}u_{\bar i \bar j}+2\eta^{k\bar l} u_{k\bar l i j} u_{\bar i \bar j}\\
%&-2\eta^{k\bar q}\eta^{p\bar l}(F_{p\bar b,\bar j} u_{b\bar q}+F_{p\bar b}F_{b\bar q,\bar j})u_{k\bar l \bar i}u_{ i j}+ 2\eta^{k\bar l} u_{k\bar l\bar i\bar j} u_{ i  j}
\end{split}
\]
We also have the following equation for the ``$\Delta_\eta$''-part,
\[
\begin{split}
\Delta_{\eta}\Theta'&=\eta^{k\bar l}(g^{i\bar j} g^{p\bar q}u_{i p}u_{\bar j\bar q})_{\bar l k}\\
&=\eta^{k\bar l}g^{i\bar j} g^{p\bar q}( u_{i p k}u_{\bar j\bar q\bar l}+u_{i p\bar l}u_{\bar j \bar q k})+2\operatorname{\mathop{Re}}\left(\eta^{k\bar l}g^{i\bar j} g^{p\bar q}u_{\bar j\bar q}u_{ i p \bar l k}\right).
\end{split}
\]
Adding them together, we have the following equation
\begin{align*}
&\repl{+}(\frac{\partial }{\partial t}-\Delta_{\eta})\Theta'\\
=&-\eta^{k\bar l}g^{i\bar j} g^{p\bar q}( u_{i p k}u_{\bar j\bar q\bar l}+u_{i p\bar l}u_{\bar j \bar q k})-2\operatorname{\mathop{Re}}\left(\eta^{k\bar l}g^{i\bar j} g^{p\bar q}u_{\bar j\bar q}u_{ i p \bar l k}\right)\\
&-2\operatorname{\mathop{Re}}\left(g^{i\bar j}g^{p\bar q}\eta^{k\bar b}\eta^{a\bar l} \eta_{a\bar b,p} F_{k\bar l,i}\right) +2\operatorname{\mathop{Re}}\left(g^{i\bar j}g^{p\bar q}\eta^{k\bar l} F_{k\bar l,ip}u_{\bar j\bar q}\right)\\
=&-\eta^{k\bar l}g^{i\bar j} g^{p\bar q}( u_{i p k}u_{\bar j\bar q\bar l}+u_{i p\bar l}u_{\bar j \bar q k})+2\operatorname{\mathop{Re}}\left(\eta^{k\bar l}g^{i\bar j} g^{p\bar q}u_{\bar j\bar q}(u_{k\bar lip}-u_{ i p \bar l k})\right)\\
&-2\operatorname{\mathop{Re}}\left(g^{i\bar j}g^{p\bar q}\eta^{k\bar b}\eta^{a\bar l} \eta_{a\bar b,p} F_{k\bar l,i}\right) +2\operatorname{\mathop{Re}}\left(g^{i\bar j}g^{p\bar q}\eta^{k\bar l} \hat F_{k\bar l,ip}u_{\bar j\bar q}\right).\\
%&+g^{i\bar j}g^{p\bar q}(-\eta^{k\bar b}\eta^{a\bar l} \eta_{a\bar b,\bar q} F_{k\bar l,\bar j} +\eta^{k\bar l} F_{k\bar l,\bar j \bar q})u_{ip}\\
%=&\repl{+}g^{i\bar j}g^{p\bar q}(-\eta^{k\bar b}\eta^{a\bar l} \eta_{a\bar b,p}  F_{k\bar l,i} +\eta^{k\bar l} \hat F_{k\bar l,ip})u_{\bar j\bar q}\\
%&+g^{i\bar j}g^{p\bar q}(-\eta^{k\bar b}\eta^{a\bar l} \eta_{a\bar b,\bar q}  F_{k\bar l,\bar j} +\eta^{k\bar l} \hat F_{k\bar l,\bar j \bar q})u_{ip}\\
%&-\eta^{k\bar l}g^{i\bar j} g^{p\bar q}( u_{i p k}u_{\bar j\bar q\bar l}+u_{i p\bar l}u_{\bar j \bar q k}+u_{\bar j\bar q}u_{ i p \bar l k}+u_{ i p}u_{\bar j\bar q \bar l k})\\
%&+g^{i\bar j}g^{p\bar q} \eta^{k\bar l} (u_{k\bar l ip} u_{\bar j \bar q}+ u_{ip} u_{k\bar l \bar j\bar q})\\
%=&\repl{+}g^{i\bar j}g^{p\bar q}(-\eta^{k\bar b}\eta^{a\bar l} \eta_{a\bar b,p}  F_{k\bar l,i} +\eta^{k\bar l} \hat F_{k\bar l,ip})u_{\bar j\bar q}\\
%&+g^{i\bar j}g^{p\bar q}(-\eta^{k\bar b}\eta^{a\bar l} \eta_{a\bar b,\bar q}  F_{k\bar l,\bar j} +\eta^{k\bar l} \hat F_{k\bar l,\bar j \bar q})u_{ip}\\
%&-\eta^{k\bar l}g^{i\bar j} g^{p\bar q}( u_{i p k}u_{\bar j\bar q\bar l}+u_{i p\bar l}u_{\bar j \bar q k})-\eta^{k\bar l}g^{i\bar j} g^{p\bar q}(u_{\bar j\bar q}u_{ i p \bar l k}+u_{ i p}u_{\bar j\bar q \bar l k})\\
%&+g^{i\bar j}g^{p\bar q} \eta^{k\bar l} (u_{k\bar l ip} u_{\bar j \bar q}+ u_{ip} u_{k\bar l \bar j\bar q}).\\
\end{align*}
\replaced{Also by}{By} Ricci identity, \deleted{we know that}
\[
%\begin{split}
u_{k\bar l i p}-u_{i p \bar l k}
%&=u_{i\bar l k p}-u_{i p\bar l k}=u_{ik\bar l p}+(u_{a}R_{i\bar a \bar l k})_p -u_{ipk\bar l}\\
%&=u_{ikp\bar l}+u_{ak}R_{i\bar a \bar l p}+u_{ia} R_{k\bar a \bar l p}+(u_aR_{i\bar a \bar l k})_p-u_{ipk\bar l}\\
=u_{ak}R_{i\bar a \bar l p}+u_{ia} R_{k\bar a \bar l p}+u_{ap}R_{i\bar a \bar l k}+u_{a}R_{i\bar a \bar l k,p}.
%\end{split}
\]
%and
%\begin{equation*}
%%\begin{split}
%u_{k\bar l \bar j\bar q}-u_{\bar j \bar q \bar l k}
%%&=u_{\bar l k \bar j\bar q}-u_{\bar j \bar q \bar l k}=u_{\bar l\bar j k\bar q}-(u_{\bar b}R_{b\bar l k\bar j})_{\bar q}-u_{\bar j \bar q \bar l k}\\
%%&=u_{\bar l\bar j\bar q k}-u_{\bar b\bar j}R_{b\bar l k\bar q}- u_{\bar l\bar b} R_{b\bar jk\bar q}-(u_{\bar b}R_{b\bar l k\bar j})_{\bar q} -u_{\bar j\bar q\bar lk}\\
%=-u_{\bar b\bar j}R_{b\bar l k\bar q}- u_{\bar l\bar b} R_{b\bar jk\bar q}-u_{\bar b\bar q}R_{b\bar l k\bar j}-u_{\bar b}R_{b\bar l k\bar j,\bar q}.
%%\end{split}
%\end{equation*}
Therefore, we have the following evolution equation
\begin{equation}
\begin{split}
&\repl{+}(\frac{\partial }{\partial t}-\Delta_{\eta})\Theta'\\
%=&\repl{+}g^{i\bar j}g^{p\bar q}(-\eta^{k\bar b}\eta^{a\bar l} \eta_{a\bar b,p}  F_{k\bar l,i} +\eta^{k\bar l} \hat F_{k\bar l,ip})u_{\bar j\bar q}\\
%&+g^{i\bar j}g^{p\bar q}(-\eta^{k\bar b}\eta^{a\bar l} \eta_{a\bar b,\bar q}  F_{k\bar l,\bar j} +\eta^{k\bar l} \hat F_{k\bar l,\bar j \bar q})u_{ip}\\
%&+g^{i\bar j}g^{p\bar q} \eta^{k\bar l} u_{\bar j \bar q}(u_{k\bar l ip}-u_{ip\bar l k})+ g^{i\bar j}g^{p\bar q} \eta^{k\bar l}u_{ip}( u_{k\bar l \bar j\bar q}-u_{\bar j\bar q \bar lk})\\
%&-\eta^{k\bar l}g^{i\bar j} g^{p\bar q}( u_{i p k}u_{\bar j\bar q\bar l}+u_{i p\bar l}u_{\bar j \bar q k})\\
%=&\repl{+}g^{i\bar j}g^{p\bar q}(-\eta^{k\bar b}\eta^{a\bar l} \eta_{a\bar b,p}  F_{k\bar l,i} +\eta^{k\bar l} \hat F_{k\bar l,ip})u_{\bar j\bar q}\\
%&+g^{i\bar j}g^{p\bar q}(-\eta^{k\bar b}\eta^{a\bar l} \eta_{a\bar b,\bar q}  F_{k\bar l,\bar j} +\eta^{k\bar l} \hat F_{k\bar l,\bar j \bar q})u_{ip}\\
%&-g^{i\bar j}g^{p\bar q} \eta^{k\bar l} u_{\bar j \bar q}(u_{ak}R_{i\bar a p\bar l}+u_{ia} R_{k\bar a p\bar l}+u_{ap}R_{i\bar a k\bar l}+u_{a}R_{i\bar a k\bar l,p})\\
%&- g^{i\bar j}g^{p\bar q} \eta^{k\bar l}u_{ip}(u_{\bar b\bar j}R_{b\bar l k\bar q}+u_{\bar l\bar b} R_{b\bar jk\bar q}+u_{\bar b\bar q}R_{b\bar l k\bar j}+u_{\bar b}R_{b\bar l k\bar j,\bar q})\\
%&-\eta^{k\bar l}g^{i\bar j} g^{p\bar q}( u_{i p k}u_{\bar j\bar q\bar l}+u_{i p\bar l}u_{\bar j \bar q k})\\
=&-2\operatorname{\mathop{Re}}(g^{i\bar j}g^{p\bar q}\eta^{k\bar b}\eta^{a\bar l} \eta_{a\bar b,p} F_{k\bar l,i}u_{\bar j\bar q})+2\operatorname{\mathop{Re}}(g^{i\bar j}g^{p\bar q}\eta^{k\bar l} \hat F_{k\bar l,ip}u_{\bar j\bar q})\\
&-2\operatorname{\mathop{Re}}\left(g^{i\bar j}g^{p\bar q} \eta^{k\bar l} u_{\bar j \bar q}(u_{ak}R_{i\bar a p\bar l}+u_{ia} R_{k\bar a p\bar l}+u_{ap}R_{i\bar a k\bar l}+u_{a}R_{i\bar a k\bar l,p})\right)\\
&-\eta^{k\bar l}g^{i\bar j} g^{p\bar q}( u_{i p k}u_{\bar j\bar q\bar l}+u_{i p\bar l}u_{\bar j \bar q k}).\qedhere
\end{split}
\end{equation}
\end{proof}

As an application of the evolution equations above, we can get the following evolution inequalities (Lemma \ref{lemma-evo-estimate-2} and \ref{lemma-evo-estimate-22}) for $\Theta=|\nabla\bar \nabla u|_\omega^2$ and $\Theta'=|\nabla \nabla u|_\omega^2$ along \added{the} line bundle \replaced{MCF}{mean curvature flow}. Before \replaced{presenting}{proving} these two lemmas, we recall the following arithmetic-geometric mean inequality for the trace of the positive definite Hermitian matrices  $\operatorname{\mathop{Tr}}((A-B)(\overline{A-B})^T)$.

\begin{yl}
\label{lemma-ag-trace}
  If $A$ and $B$ are two Hermitian matrixes, then there holds the following inequality
  \[
  \operatorname{\mathop{Tr}}(A\bar{B}^T+B\bar{A}^T)\leq \operatorname{\mathop{Tr}}(A\bar{A}^T+B\bar{B}^T).
  \]
\end{yl}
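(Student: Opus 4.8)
The plan is to reduce the claim to the obvious nonnegativity of a Frobenius-type norm. For any complex matrix $M=(M_{ab})$ one has $\operatorname{\mathop{Tr}}(M\bar M^T)=\sum_{a,b}|M_{ab}|^2\geq 0$; applying this to $M=A-B$ gives the inequality after expanding. So the proof is essentially one line of bookkeeping, and I do not expect any genuine obstacle.

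In detail, first I would record that since $A$ and $B$ are Hermitian, both sides of the asserted inequality are real: indeed $\overline{\operatorname{\mathop{Tr}}(A\bar B^T)}=\operatorname{\mathop{Tr}}(\bar A B^T)=\operatorname{\mathop{Tr}}(B^TA^*{}^T)^{\ldots}$, more cleanly $\operatorname{\mathop{Tr}}(A\bar B^T)+\operatorname{\mathop{Tr}}(B\bar A^T)=2\operatorname{\mathop{Re}}\operatorname{\mathop{Tr}}(A\bar B^T)$ and $\operatorname{\mathop{Tr}}(A\bar A^T)=\sum_{a,b}|A_{ab}|^2$, $\operatorname{\mathop{Tr}}(B\bar B^T)=\sum_{a,b}|B_{ab}|^2$ are manifestly nonnegative reals. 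Thus it suffices to compare these real quantities.

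Next I would consider the matrix $C=A-B$ and compute
\[
\operatorname{\mathop{Tr}}(C\bar C^T)=\sum_{a,b}|C_{ab}|^2=\sum_{a,b}|A_{ab}-B_{ab}|^2\geq 0.
\]
Expanding $C\bar C^T=(A-B)(\bar A^T-\bar B^T)=A\bar A^T-A\bar B^T-B\bar A^T+B\bar B^T$ and taking traces yields
\[
\operatorname{\mathop{Tr}}(A\bar A^T+B\bar B^T)-\operatorname{\mathop{Tr}}(A\bar B^T+B\bar A^T)=\operatorname{\mathop{Tr}}(C\bar C^T)\geq 0,
\]
which is exactly the claimed inequality. (If one prefers, the same estimate follows from the Cauchy--Schwarz inequality $|\operatorname{\mathop{Tr}}(A\bar B^T)|\leq \|A\|_{F}\|B\|_{F}$ together with $2\|A\|_F\|B\|_F\leq \|A\|_F^2+\|B\|_F^2$, but the direct expansion of $\operatorname{\mathop{Tr}}(C\bar C^T)$ is cleaner and self-contained.) The only point worth emphasizing in the write-up is that the Hermitian hypothesis is used solely to guarantee that $\operatorname{\mathop{Tr}}(A\bar B^T+B\bar A^T)$ is real, so that the inequality is meaningful in the form stated; the nonnegativity step itself requires nothing about $A$ and $B$.
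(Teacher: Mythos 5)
Your proof is correct and is exactly the argument the paper has in mind: the lemma is stated there without proof, introduced precisely via the nonnegativity of $\operatorname{Tr}\bigl((A-B)\overline{(A-B)}^T\bigr)$, which is the expansion you carry out (and, as you note, the Hermitian hypothesis plays no real role).
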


\begin{yl}
  \label{lemma-evo-estimate-2}
  There exist two positive constants $C_1, C_2$ depending only on the geometry of $(X,\omega)$ and $\hat F$ such that
  \[
  (\frac{\partial }{\partial t}-\Delta_\eta)\Theta\leq \eta^{a\bar q}u_{a\bar s i}u_{s\bar q \bar i}(-\frac{1}{2}+C_1\Theta) +C_2\Theta(1+\Theta).
  \]
\end{yl}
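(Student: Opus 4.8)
\emph{Proof strategy.} The plan is to start from the exact identity \eqref{eqn-evo-2} of Lemma~\ref{lemma-evo-2} and estimate its right-hand side term by term. Set $\mathcal G:=\eta^{a\bar q}u_{a\bar si}u_{s\bar q\bar i}\ge 0$. First I would note that the two third-order terms in \eqref{eqn-evo-2} are both nonpositive: using only that $u$ is real (so $\overline{u_{i\bar l}}=u_{l\bar i}$, hence $\overline{u_{i\bar lp}}=u_{l\bar i\bar p}$) and that $\eta^{-1}$ is a positive definite Hermitian form, a relabelling shows that each of $-\eta^{k\bar l}g^{i\bar j}g^{p\bar q}u_{i\bar lp}u_{k\bar j\bar q}$ and $-\eta^{k\bar l}g^{i\bar j}g^{p\bar q}u_{i\bar l\bar q}u_{k\bar jp}$ is minus a nonnegative $\eta^{-1}$-quadratic form in $\nabla\bar\nabla\nabla u$, the first being exactly $-\mathcal G$. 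One keeps $-\tfrac12\mathcal G$ and discards the rest, so that it remains to bound the three remaining terms of \eqref{eqn-evo-2} — the curvature term, the term carrying the factor $\eta_{a\bar b,\bar l}F_{p\bar q,i}u_{k\bar j}$, and $2\RE(g^{i\bar j}g^{k\bar l}\eta^{p\bar q}u_{k\bar j}\hat F_{p\bar q,i\bar l})$ — by $\tfrac12\mathcal G+C_1\Theta\mathcal G+C_2\Theta(1+\Theta)$.

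Throughout, the estimates rest on flow-independent facts: $\eta\ge g$, so $\|\eta^{-1}\|\le\|g^{-1}\|$; the universal bound $\|(I+F^2)^{-1}F\|\le\tfrac12$ (which keeps $\nabla\eta^{-1}$ under control); the pointwise identity $|\nabla\bar\nabla u|^2_\omega=\Theta$ together with compactness of $X$, giving uniform bounds on $\hat F$, $\nabla\hat F$, $\nabla^2\hat F$ and the curvature of $\omega$; the resulting inequality $|\eta-\hat\eta|+|\eta^{-1}-\hat\eta^{-1}|\le C\sqrt\Theta(1+\sqrt\Theta)$, which follows from $F=\hat F+\nabla\bar\nabla u$; and the fact that, in the regime where $\Theta$ is bounded, $\eta^{-1}$ is bounded below so that $|\nabla\bar\nabla\nabla u|^2_\omega\le C\mathcal G$. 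The curvature term of \eqref{eqn-evo-2} is then disposed of at once: it is $\le C\|\eta^{-1}\|\,|R|\,|\nabla\bar\nabla u|^2_\omega\le C_2\Theta$.

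The crux is the pair of $\hat F$-derivative terms. Rather than estimating them head-on, I would exploit that $(\partial_t-\Delta_\eta)\Theta=2\RE(g^{i\bar j}g^{k\bar l}u_{k\bar j}(\partial_t-\Delta_\eta)u_{i\bar l})-\mathcal G-(\text{a nonnegative term})$, and compute, from \eqref{deltatheta} and $F=\hat F+\nabla\bar\nabla u$, that $(\partial_t-\Delta_\eta)u_{i\bar l}$ equals $(\nabla_{\bar l}\eta^{p\bar q})F_{p\bar q,i}+\eta^{p\bar q}\hat F_{p\bar q,i\bar l}$ up to terms that are curvature times $\nabla\bar\nabla u$. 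Substituting $\eta^{p\bar q}=\hat\eta^{p\bar q}+(\eta^{p\bar q}-\hat\eta^{p\bar q})$ and $F_{p\bar q,i}=\hat F_{p\bar q,i}+u_{p\bar qi}$, the part independent of $u$ is $(\nabla_{\bar l}\hat\eta^{p\bar q})\hat F_{p\bar q,i}+\hat\eta^{p\bar q}\hat F_{p\bar q,i\bar l}=\nabla_{\bar l}(\hat\eta^{p\bar q}\hat F_{p\bar q,i})=0$ by the dHYM identity \eqref{eqn-1st-derivative} (equivalently \eqref{eqn-2nd-derivative}). Hence $(\partial_t-\Delta_\eta)u_{i\bar l}$ carries no zeroth-order term in $u$: it is a finite sum, each summand of which contains either a factor $\nabla\bar\nabla u$ — with coefficient bounded by the geometry and $\hat F$ via the bounds above — or a factor $\nabla\bar\nabla\nabla u$ (entering through $(\nabla_{\bar l}\eta^{p\bar q})u_{p\bar qi}$), the latter carrying the $\eta^{-1}$-weights already present in $\mathcal G$.

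Multiplying this by $u_{k\bar j}$, i.e.\ by a quantity of size $\sqrt\Theta$, the summands linear in $\nabla\bar\nabla u$ become $O(\Theta(1+\Theta))$, while the summands carrying $\nabla\bar\nabla\nabla u$ are split, by Cauchy–Schwarz in the matching $\eta^{-1}$-inner product (Lemma~\ref{lemma-ag-trace}), into $\le\tfrac12\mathcal G$ plus a remainder whose coefficient — carrying an extra $|u_{k\bar j}|^2\sim\Theta$ and $|F|^2\lesssim 1+\Theta$ — is $O(\Theta(1+\Theta)+\Theta\mathcal G)$. Together with the curvature estimate this yields $(\partial_t-\Delta_\eta)\Theta\le-\tfrac12\mathcal G+C_1\Theta\mathcal G+C_2\Theta(1+\Theta)$, the desired inequality. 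The main obstacle, and the only genuinely delicate point, is the bookkeeping in these last two steps: one must unfold the products $\eta_{a\bar b,\bar l}F_{p\bar q,i}u_{k\bar j}$ and $\eta^{p\bar q}u_{k\bar j}\hat F_{p\bar q,i\bar l}$ carefully enough that (i) the cancellation $\nabla_{\bar l}(\hat\eta^{p\bar q}\hat F_{p\bar q,i})=0$ removes exactly the a priori $O(\sqrt\Theta)$ contributions (which would not fit into $C_2\Theta(1+\Theta)$), and (ii) every remaining contribution quadratic in $\nabla\bar\nabla\nabla u$ can be absorbed into $\tfrac12\mathcal G$ — which works only because its third-derivative weights are precisely the $\eta^{-1}$-weights of $\mathcal G$ and because its coefficient carries a spare factor $\nabla\bar\nabla u$.
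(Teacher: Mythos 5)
Your proposal is correct and is essentially the paper's own argument: it starts from the identity of Lemma \ref{lemma-evo-2} in normal coordinates, uses the differentiated dHYM identities \eqref{eqn-1st-derivative}--\eqref{eqn-2nd-derivative} — i.e.\ $\nabla_{\bar l}(\hat\eta^{p\bar q}\hat F_{p\bar q,i})=0$ — to cancel exactly the a priori $O(\sqrt{\Theta})$ contributions whose coefficients are built from $\hat F$ and $\hat\eta$ alone, and absorbs every term quadratic in $\nabla\bar\nabla\nabla u$ into the good term via Lemma \ref{lemma-ag-trace} together with $\eta^{-1}\le g^{-1}$, $F=\hat F+\nabla\bar\nabla u$ and the bound \eqref{eqn-hateta-eta} on $\hat\eta-\eta$; the paper organizes the same cancellation through the splitting into $A_1,A_2,A_3$ and $D_1,\dots,D_4$ rather than through $(\partial_t-\Delta_\eta)u_{i\bar l}$, which is a purely cosmetic difference. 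The only adjustment: drop your auxiliary remark that $|\nabla\bar\nabla\nabla u|^2_\omega\le C\,\eta^{a\bar q}u_{a\bar si}u_{s\bar q\bar i}$ ``in the regime where $\Theta$ is bounded'' — the lemma is unconditional, and that bound is never needed since every third-derivative square to be absorbed already carries at least one $\eta^{-1}$ weight.
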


\begin{zj}
  The key of the proof of this lemma and Theorem \ref{thm-smallhessian} is that $\hat F$ is a solution to dHYM and we can apply Equation \eqref{eqn-1st-derivative} and \eqref{eqn-2nd-derivative}.
\end{zj}

\begin{proof}
For convenience, we will simplify these quantities in the normal coordinates system. In fact, we choose normal coordinates near $p\in X$ such that
\[
g_{i\bar j}(p)=\delta_{i\bar j}, u_{i\bar j}(p)=\sigma_i\delta_{i\bar j} \text{ and }\frac{\partial g_{i\bar j}}{\partial z^k}(p)=0.
\]

We should pay attention that we can diagonalize the Hermitian matrix $\{u_{i\bar j}\}$ only, but not the Hermitian matrix $\eta$\added{ and $F$}. However, in the case $[F]=c[\omega]$, the \replaced{unique}{standard} \replaced{dHYM}{deformed Hermitian-Yang-Mills} metric is
  \[
  \hat F=c\omega.
  \]
In this \added{very special} case, $F$ is diagonal automatically and we can simplify the computation. We leave this very special case to the reader. We deal with the general case in this lemma.

Using the notation that $F_{i\bar j}=\hat F_{i\bar j}+u_{i\bar j}$, we first rewrite the evolution equation of $\Theta=|\nabla\bar \nabla u|^2$ at $p\in X$ as follow,

\begin{equation}
\label{eqn-Theta}
\begin{split}
&(\frac{\partial }{\partial t}-\Delta_{\eta})\Theta\\
=&-\eta^{k\bar l}(u_{i\bar lp}u_{k\bar i \bar p}+u_{i\bar l\bar p}u_{k\bar i p})-2\operatorname{\mathop{Re}}\left(\eta^{p\bar b}\eta^{a\bar q}\eta_{a\bar b,\bar i}F_{p\bar q {,}i}\sigma_i\right)\\
&+2\operatorname{\mathop{Re}}\left(\sigma_i(\sigma_q-\sigma_i) R_{i\bar i p\bar q} \eta^{p\bar q} +\sigma_i \eta^{p\bar q}\hat F_{p\bar q{,} i\bar i}\right)\\
=&-\eta^{k\bar l}(u_{i\bar lp}u_{k\bar i \bar p}+u_{i\bar l\bar p}u_{k\bar i p})-2\operatorname{\mathop{Re}}\left(\eta^{p\bar b}\eta^{a\bar q}\eta_{a\bar b,\bar i}(\hat F_{p\bar q ,i}+u_{p\bar q i})\sigma_i\right)\\
&+2\operatorname{\mathop{Re}}\left(\sigma_i(\sigma_q-\sigma_i) R_{i\bar i p\bar q} \eta^{p\bar q} +\sigma_i \eta^{p\bar q}\hat F_{p\bar q{,} i\bar i}\right)\\
=&-\eta^{k\bar l}(u_{i\bar lp}u_{k\bar i \bar p}+u_{i\bar l\bar p}u_{k\bar i p})-2\operatorname{\mathop{Re}}\left(\eta^{p\bar b}\eta^{a\bar q}\eta_{a\bar b,\bar i}u_{p\bar q i}\sigma_i\right)\\
& +2\operatorname{\mathop{Re}}\left(\sigma_i \eta^{p\bar q}\hat F_{p\bar q{,} i\bar i}-\eta^{p\bar b}\eta^{a\bar q}\eta_{a\bar b,\bar i}\hat F_{p\bar q ,i}\sigma_i\right)+2\operatorname{\mathop{Re}}\left( \sigma_i(\sigma_q-\sigma_i) R_{i\bar i p\bar q} \eta^{p\bar q}\right)\\
=&-\eta^{k\bar l}g^{i\bar j}g^{p\bar q}(u_{i\bar lp}u_{k\bar j \bar q}+u_{i\bar l\bar q}u_{k\bar j p})+A_1+A_2+A_3
\end{split}
\end{equation}
where we set
\[
\begin{split}
A_1=&-2\operatorname{\mathop{Re}}\left(\eta^{p\bar b}\eta^{a\bar q}\eta_{a\bar b,\bar i}u_{p\bar q i}\sigma_i\right),\\
A_2=&\repl{+}2\operatorname{\mathop{Re}}\left(\sigma_i \eta^{p\bar q}\hat F_{p\bar q{,} i\bar i}-\eta^{p\bar b}\eta^{a\bar q}\eta_{a\bar b,\bar i}\hat F_{p\bar q ,i}\sigma_i\right),\\
A_3=&\repl{-}2\operatorname{\mathop{Re}}\left( \sigma_i(\sigma_q-\sigma_i) R_{i\bar i p\bar q} \eta^{p\bar q}\right).\\
\end{split}
\]
Then we will estimate $A_1$, $A_2$ and $A_3$ respectively.

\noindent \textbf{Estimate of $A_1$:}

Since $ \eta_{\bar{k}j}=g_{\bar{k}j}+F_{\bar{k}l}g^{l\bar{m}}F_{\bar{m}j}$, we have
\[
\begin{split}
A_1=&-2Re(\eta^{p\bar b}\eta^{a\bar q}( F_{a\bar t,\bar i}F_{t\bar b}+F_{a\bar t}F_{t\bar b,\bar i})u_{p\bar q i}\sigma_i)\\
=&-2Re(\eta^{p\bar b}\eta^{a\bar q}(\hat F_{a\bar t,\bar i}F_{t\bar b}+F_{a\bar t}\hat F_{t\bar b,\bar i})u_{p\bar q i}\sigma_i)\\
   &-2 Re(\eta^{p\bar b}\eta^{a\bar q}(u_{a\bar t,\bar i}F_{t\bar b}+F_{a\bar t}u_{t\bar b,\bar i})u_{p\bar q i}\sigma_i)\\
=&B_1+B_2
\end{split}
\]
where
\[
  B_1=-2Re(\eta^{p\bar b}\eta^{a\bar q}(\hat F_{a\bar t,\bar i}F_{t\bar b}+F_{a\bar t}\hat F_{t\bar b,\bar i})u_{p\bar q i}\sigma_i)
\]
and
\[
B_2=-2 Re(\eta^{p\bar b}\eta^{a\bar q}(u_{a\bar t,\bar i}F_{t\bar b}+F_{a\bar t}u_{t\bar b,\bar i})u_{p\bar q i}\sigma_i).
\]
Since $\eta$ is a positive definite Hermitian matrix, we can write $\eta$ as square of a positive definite Hermitian matrix, i.e. there exists a positive definite Hermitian matrix $\eta_1$ such that $\eta =\eta_1\cdot \eta_1$. \replaced{And hence, in local coordinates}{, i.e.}
\[
\eta^{p\bar q}=\eta_1^{p\bar m}\eta_1^{m\bar q}.
\]

\noindent \underline{Estimate of $B_1$:}

With notations above, we can rewrite $B_1$ such that
\[
\begin{split}
  B_1=&-2Re(\eta_1^{p\bar m}\eta_1^{m\bar b}\eta_1^{a\bar n}\eta_1^{n\bar q}(\hat F_{a\bar t,\bar i}F_{t\bar b}+F_{a\bar t}\hat F_{t\bar b,\bar i})u_{p\bar q i}\sigma_i).
\end{split}
\]
For convenience, we sperate $B_1$ into two real parts $B_{1,1}$ and $B_{1,2}$ where
\[
B_{1,1}=-2Re(\eta_1^{p\bar m}\eta_1^{m\bar b}\eta_1^{a\bar n}\eta_1^{n\bar q}\hat F_{a\bar t,\bar i}F_{t\bar b}u_{p\bar q i}\sigma_i)
\]
and
\[
B_{1,2}=-2Re(\eta_1^{p\bar m}\eta_1^{m\bar b}\eta_1^{a\bar n}\eta_1^{n\bar q}F_{a\bar t}\hat F_{t\bar b,\bar i}u_{p\bar q i}\sigma_i).
\]
According to Lemma \ref{lemma-ag-trace}, we obtain that
\[
\begin{split}
  B_{1,1}=&-2\operatorname{\mathop{Re}}(\sigma_i \eta_1^{p\bar m} u_{p\bar q i}\eta_1^{n\bar q} \cdot \eta_1^{a\bar n}\hat F_{a\bar t,\bar i}F_{t\bar b}\eta_1^{m\bar b})\\
  \leq & \frac{1}{400}\eta^{p\bar b}\eta^{a\bar q} u_{p\bar q i}u_{\bar b p \bar i} +C\sigma^2_i\eta^{p\bar b}\eta^{a\bar q}F_{p\bar t}\hat F_{t\bar q,i}\hat F_{a\bar s,\bar i}F_{s\bar b}\\
  \leq & \frac{1}{400}\eta^{p\bar b}\eta^{a\bar q} u_{p\bar q i}u_{\bar b p \bar i} +C\Theta F_{s\bar b}\eta^{p\bar b}F_{p\bar t}\hat F_{t\bar q,i}\eta^{a\bar q}\hat F_{a\bar s,\bar i}\\
  =&\frac{1}{400}\eta^{p\bar b}\eta^{a\bar q} u_{p\bar q i}u_{\bar b p \bar i} +C\Theta X_{s\bar t}Y_{t\bar s}
\end{split}
\]
where $X_{s\bar t}=F_{s\bar b}\eta^{p\bar b}F_{p\bar t}$ and $Y_{t\bar s}=\hat F_{t\bar q,i}\eta^{a\bar q}\hat F_{a\bar s,\bar i}$. It is easy to see that $X$ and $Y$ are all semi-positive definite Hermitian matrix. Since $F_{i\bar j}=\hat F_{i\bar j}+u_{i\bar j}$, we have
\begin{equation}
\label{eqn-FhatF}
-C(1+\sqrt{\Theta})I\leq F\leq C(1+\sqrt{\Theta})I.
\end{equation}
 By the definition of $\eta$ \added{and the choice of normal coordinates system}, we know
\[
\eta=I+F^2.
\]
Thus \added{$F$ and $\eta^{-1}$ communicate in the sense of matrixes multiplication, i.e.}
\[
\eta^{-1}F=F\eta^{-1}.
\]
In this sense, we can rewrite $X=\eta^{-1}F^2$ and
\[
\added{0\leq}X\leq C(1+\sqrt{\Theta})^2\eta^{-1}\leq C(1+\Theta)\eta^{-1}\leq C(1+\Theta)I.
\]
Since $Y$ is also semi-positive definite and $\eta>I$, we have
\[
\Theta X_{s\bar t}Y_{t\bar s}\leq C(\Theta+\Theta^2) \hat F_{s\bar q,i}\eta^{a\bar q}\hat F_{a\bar s,\bar i}\leq C(\Theta+\Theta^2).
\]
Thus we have the following inequality for $B_{1,1}$
\begin{equation}
  \label{eqn-B_11estimate}
  B_{1,1}\leq \frac{1}{400} \eta^{p\bar b}\eta^{a\bar q}u_{p\bar q i} u_{\bar bp\bar i}+C(\Theta+\Theta^2).
\end{equation}
The similar inequality also holds for $B_{1,2}$, i.e.
\begin{equation}
  \label{eqn-B_12estimate}
  B_{1,2}\leq \frac{1}{400} \eta^{p\bar b}\eta^{a\bar q}u_{p\bar q i} u_{\bar bp\bar i}+C(\Theta+\Theta^2).
\end{equation}
Adding inequalities \eqref{eqn-B_11estimate} and \eqref{eqn-B_12estimate} together,  we get that
\begin{equation}
  \label{eqn-B_1estimate}
  B_{1}\leq \frac{1}{200} \eta^{p\bar b}\eta^{a\bar q}u_{p\bar q i} u_{\bar bp\bar i}+C(\Theta+\Theta^2).
\end{equation}

\noindent \underline{Estimate of $B_2$:}

The estimate of $B_2$ is similar to that of $B_1$. We \added{also} sperate $B_2$ into two real parts $B_{2,1}$ and $B_{2,2}$ where
\[
B_{2,1}=-2\operatorname{\mathop{Re}}(\sigma_i\eta^{p\bar b}\eta^{a\bar q}u_{a\bar t,\bar i}F_{t\bar b}u_{p\bar q i})
\]
and
\[
B_{2,2}=-2\operatorname{\mathop{Re}}(\sigma_i\eta^{p\bar b}\eta^{a\bar q}F_{a\bar t}u_{t\bar b,\bar i}u_{p\bar q i}).
\]
According to Lemma \ref{lemma-ag-trace}, we have
\begin{equation}
\label{eqn-B_212}
\begin{split}
  B_{2,1}&\leq C(\varepsilon) \sigma_i^2 \eta^{p\bar b}\eta^{a\bar q} u_{p\bar q i}u_{a\bar b \bar i} +\varepsilon \eta^{p\bar b}\eta^{a\bar q} u_{a\bar s \bar i}F_{s\bar b} u_{t\bar q i}F_{p\bar t}\\
  &\leq C(\varepsilon) \Theta \eta^{p\bar b}\eta^{a\bar q} u_{p\bar q i}u_{a\bar b \bar i} +\varepsilon F_{p\bar t}\eta^{p\bar b}F_{s\bar b} u_{a\bar s \bar i}\eta^{a\bar q} u_{t\bar q i}.
\end{split}
\end{equation}
We also denote two semi-positive definite Hermitian matrices $X$ and $Y$ as follow
\[
X_{s\bar t}=F_{p\bar t}\eta^{p\bar b}F_{s\bar b}
\]
and
\[
Y_{t\bar s}=u_{a\bar s \bar i}\eta^{a\bar q} u_{t\bar q i}.
\]
Then the inequality \eqref{eqn-B_212} can be written as
\begin{equation}
\label{eqn-B_211}
B_{2,1}\leq C(\varepsilon) \Theta\eta^{p\bar b}\eta^{a\bar q} u_{p\bar qi}u_{a\bar b\bar i} +\varepsilon \mathrm{Tr}(XY).
\end{equation}
%\deleted{As before, we can rewrite $X=F\eta^{-1}F$ as $X= \eta^{-1}F^2$ because $\eta^{-1}$ communicates with $F$. Furthermore, since the complex hessian of $u$ satisfies that
%\[
%-C\sqrt{\Theta} I\leq \{u_{s\bar t}\}\leq C\sqrt{\Theta} I
%\]
%and $\hat F$ is bounded, the relation $F_{s\bar t}=\hat F_{s\bar t}+u_{s\bar t}$ implies that
%\[
%F^2\leq C(1+\Theta)I.
%\]}
\replaced{As discussed in the estimate of $B_{1,1}$,}{Thus} the Hermitian matrix $X$ satisfies the following inequality
\[
0\leq X\leq C(1+\Theta) \eta^{-1}.
\]
Combining this inequality with \eqref{eqn-B_211} and choosing suitable $\varepsilon$, we obtain
\begin{equation}
\label{eqn-B_21estimate}
\begin{split}
B_{2,1}&\leq C(\varepsilon) \Theta \eta^{p\bar b}\eta^{a\bar q} u_{p\bar qi}u_{a\bar b\bar i} +C\varepsilon(1+\Theta)\eta^{p\bar b} u_{a\bar b\bar i}\eta^{a\bar q} u_{p\bar q i}\\
&\leq (\frac{1}{400}+C\Theta)\eta^{p\bar b}\eta^{a\bar q} u_{p\bar qi}u_{a\bar b\bar i}.
\end{split}
\end{equation}
Similarly, we also have the following inequality for $B_{2,2}$,
\begin{equation}
\label{eqn-B_22estimate}
\begin{split}
B_{2,2}&\leq (\frac{1}{400}+C\Theta)\eta^{p\bar b}\eta^{a\bar q} u_{p\bar qi}u_{a\bar b\bar i}.
\end{split}
\end{equation}
Adding \eqref{eqn-B_21estimate} and \eqref{eqn-B_22estimate} together, we can get the following inequality for $B_2$,
\begin{equation}
\label{eqn-B_2estimate}
\begin{split}
B_{2}&\leq (\frac{1}{200}+C\Theta)\eta^{p\bar b}\eta^{a\bar q} u_{p\bar qi}u_{a\bar b\bar i}.
\end{split}
\end{equation}
Therefore, we have the following estimate for $A_1$ by adding \eqref{eqn-B_1estimate} and \eqref{eqn-B_2estimate},
\begin{equation}
\label{eqn-A_1estimate}
\begin{split}
A_1&\leq (\frac{1}{100}+C\Theta)\eta^{a\bar q} u_{a\bar t \bar k} u_{\bar q t k}+C_1 \Theta(1+\Theta).
\end{split}
\end{equation}

\noindent \textbf{Estimate of $A_2$:}

We rewrite $A_2$ as follow
\[
\begin{split}
A_2=&\repl{+}2\operatorname{\mathop{Re}}(\sigma_i\eta^{p\bar q}\hat F_{p\bar q,i\bar i}-\sigma_i\eta^{p\bar b}\eta^{a\bar q}\eta_{a\bar b,\bar i}\hat F_{p\bar q,i})\\
=&\repl{-}2\operatorname{\mathop{Re}}(\sigma_i(\eta^{p\bar q}-\hat \eta^{p\bar q})\hat F_{p\bar q,i\bar i}-\sigma_i\eta^{p\bar b}\eta^{a\bar q}(\eta_{a\bar b,\bar i}-\hat \eta_{a\bar b,\bar i})\hat F_{p\bar q,i}\\
&-\sigma_i\eta^{p\bar b}(\eta^{a\bar q}-\hat \eta^{a\bar q})\hat \eta_{a\bar b,\bar i} \hat F_{p\bar q,i}-\sigma_i(\eta^{p\bar b}-\hat \eta^{p\bar b})\hat \eta^{a\bar q}\hat \eta_{a\bar b,\bar i} \hat F_{p\bar q,i}\\
&-\sigma_i(\hat \eta^{p\bar b}\hat \eta^{a\bar q}\hat \eta_{a\bar b,\bar i}\hat F_{p\bar q,i} -\hat \eta^{p\bar q}\hat F_{p\bar q,i\bar i}))\\
=&\repl{-}2\operatorname{\mathop{Re}}(\sigma_i(\eta^{p\bar q}-\hat \eta^{p\bar q})\hat F_{p\bar q,i\bar i}-\sigma_i\eta^{p\bar b}\eta^{a\bar q}(\eta_{a\bar b,\bar i}-\hat \eta_{a\bar b,\bar i})\hat F_{p\bar q,i}\\
&-\sigma_i\eta^{p\bar b}(\eta^{a\bar q}-\hat \eta^{a\bar q})\hat \eta_{a\bar b,\bar i} \hat F_{p\bar q,i}-\sigma_i(\eta^{p\bar b}-\hat \eta^{p\bar b})\hat \eta^{a\bar q}\hat \eta_{a\bar b,\bar i} \hat F_{p\bar q,i})\\
\end{split}
\]
where we have used Equation \eqref{eqn-2nd-derivative} in the last equality. For convenience, we sperate $A_2$ into four parts and set
\[
\begin{split}
  D_1=&\repl{-}2\operatorname{\mathop{Re}}(\sigma_i\eta^{p\bar t}(\hat \eta_{s\bar t}- \eta_{s\bar t})\hat \eta^{s\bar q}\hat F_{p\bar q,i\bar i}),\\
  D_2=&-2\operatorname{\mathop{Re}}(\sigma_i \eta^{p\bar b}\eta^{a\bar t}(\hat \eta_{s\bar t}-\eta_{s\bar t})\hat \eta^{s\bar q}\hat \eta_{a\bar b,\bar i} \hat F_{p\bar q,i}),\\
  D_3=&-2\operatorname{\mathop{Re}}(\sigma_i\eta^{p\bar t}(\hat \eta_{s\bar t}-\eta_{s\bar t})\hat\eta^{s\bar b}\hat \eta^{a\bar q}\hat \eta_{a\bar b,\bar i} \hat F_{p\bar q,i}),\\
  D_4=&-2\operatorname{\mathop{Re}}(\sigma_i\eta^{p\bar b}\eta^{a\bar q}(\eta_{a\bar b,\bar i}-\hat \eta_{a\bar b,\bar i})\hat F_{p\bar q,i}).\\
%  =&-2\operatorname{\mathop{Re}}(\sigma_i\eta^{p\bar b}\eta^{a\bar q}\hat F_{p\bar q,i}(u_{a\bar s\bar i}\hat F_{s\bar b}+u_{a\bar b\bar i} \sigma_b+\hat F_{a\bar b\bar i}\sigma_b +\sigma_a\hat F_{a\bar b,\bar i}+\hat F_{a\bar s}u_{s\bar b,\bar i}+\sigma_a u_{a\bar b,\bar i}))\\
%  =&-2\mathrm{Re}\left(\sigma_i \eta^{p\bar b}\eta_1^{a\bar n}\eta_1^{n\bar q}\hat F_{p\bar q,i}(u_{a\bar s\bar i}\hat F_{s\bar b}+u_{a\bar b\bar i} \sigma_b+\hat F_{a\bar b\bar i}\sigma_b +\sigma_a\hat F_{a\bar b,\bar i}+\hat F_{a\bar s}u_{s\bar b,\bar i}+\sigma_a u_{a\bar b,\bar i})\right).\\
\end{split}
\]
Due to the expression of $\eta$ and $\hat \eta$, we get that
\[
\begin{split}
\hat \eta_{s\bar t}-\eta_{s\bar t}=&\hat F_{s\bar b}\hat F_{b\bar t}-(\hat F_{s\bar b}+u_{s\bar b})(\hat F_{b\bar t}+u_{b\bar t})\\
=&u_{s\bar b}u_{b\bar t}-\hat F_{s\bar b}u_{b\bar t}-u_{s\bar b}\hat F_{b\bar t}\\
=&\delta_{st} \sigma_s\sigma_t -\hat F_{s\bar t}(\sigma_t+\sigma_s).
\end{split}
\]
Hence, $\hat\eta-\eta$ satisfies the following inequality
\begin{equation}
\label{eqn-hateta-eta}
-C(\Theta +\sqrt{\Theta})I\leq \hat \eta-\eta \leq C(\Theta +\sqrt{\Theta})I
\end{equation}
for some constant $C$ depending only on $\hat F$ and $\omega$. With this inequality, we can estimate $D_1$, $D_2$, $D_3$ and $D_4$ respectively.

\noindent\underline{Estimate of $D_1$:}

According to Lemma \ref{lemma-ag-trace}, we can estimate $D_1$ as follow,
\begin{equation}
\label{eqn-D_1estimate}
\begin{split}
D_1=&2\operatorname{\mathop{Re}}( \sigma_i\eta^{p\bar t}(\hat \eta_{s\bar t}- \eta_{s\bar t})\hat \eta^{s\bar q}\hat F_{p\bar q,i\bar i})\\
=&2\operatorname{\mathop{Re}}( \sigma_i\eta_1^{p\bar\alpha}\hat F_{p\bar q, i\bar i}\hat \eta_1^{\beta\bar q}\cdot \eta_1^{\alpha \bar t} (\hat \eta_{s\bar t}-\eta_{s\bar t})\hat \eta_1^{s\bar \beta})\\
\leq &C\sigma^2_i \hat F_{p\bar q,i\bar i}\eta^{p\bar t} \hat F_{a\bar t,i\bar i}\hat \eta^{a\bar q} +C\eta^{p \bar t}(\hat \eta_{s\bar t}-\eta_{s\bar t}) \hat \eta^{s\bar q}(\hat \eta_{p\bar q} -\eta_{p\bar q})\\
\leq &C\Theta +C(\Theta+\sqrt{\Theta})^2\\
\leq & C\Theta+C \Theta^2
\end{split}
\end{equation}
where we have used the inequality \eqref{eqn-hateta-eta} in the second `$\leq$'.

\noindent \underline{Estimate of $D_2$:}

Similar to the estimate of $D_1$, we have the follow estimate for $D_2$,
\begin{equation}
  \label{eqn-D_2estimate}
  \begin{split}
  D_2=&-2\operatorname{\mathop{Re}}(\sigma_i \eta^{p\bar b}\eta^{a\bar t}(\hat \eta_{s\bar t}-\eta_{s\bar t})\hat \eta^{s\bar q}\hat \eta_{a\bar b,\bar i} \hat F_{p\bar q,i})\\
  \leq &\repl{-}C\eta^{a\bar t}(\hat \eta_{s\bar t}-\eta_{s\bar t})(\hat \eta_{a\bar s} -\eta_{a\bar s})\\
  &+C\sigma_i^2 \eta^{a\bar t}\hat \eta_{a\bar b,i}\eta^{p\bar b}\hat F_{p\bar q,i}\hat \eta^{s\bar q} \hat \eta_{c\bar t,\bar i}\eta^{c\bar d}\hat F_{e\bar d,\bar i} \hat \eta^{e\bar s}\\
  \leq & C(\Theta+\Theta^2)
  \end{split}
\end{equation}
where we have used the inequality \eqref{eqn-hateta-eta} and the fact $0< \eta^{-1},\hat \eta^{-1}< I$ in the second ``$\leq$''.

\noindent \underline{Estimate of $D_3$}

Similar to the estimate of $D_1$ and $D_2$, we can estimate $D_3$ as follow
\begin{equation}
\label{eqn-D_3estimate}
\begin{split}
  D_3=&-2\operatorname{\mathop{Re}}(\sigma_i\eta^{p\bar t}(\hat \eta_{s\bar t}-\eta_{s\bar t})\hat\eta^{s\bar b}\hat \eta^{a\bar q}\hat \eta_{a\bar b,\bar i} \hat F_{p\bar q,i})\\
  \leq &\repl{-} C\eta^{p\bar t}(\hat\eta_{s\bar t} -\eta_{s\bar t})(\hat \eta_{p\bar s} -\eta_{p\bar s})\\
  &+ C \sigma^2_i \eta^{p\bar t}\hat F_{p\bar q,i}\hat \eta^{a\bar q}\hat \eta_{a\bar b,\bar i}\hat \eta^{s\bar b}\hat F_{c\bar t,\bar i}\hat \eta^{c\bar d}\hat \eta_{e\bar d,i}\hat\eta^{e\bar s}\\
  \leq& \repl{+}C(\Theta+\Theta^2)
\end{split}
\end{equation}
where we have used the inequality \eqref{eqn-hateta-eta} and the fact $0\leq \eta^{-1},\hat \eta^{-1}\leq I$ in the second ``$\leq$''.

\noindent \underline{Estimate of $D_4$:}

By the definition of $\eta$, we know that
\[
\eta_{a\bar b,\bar i}-\hat \eta_{a\bar b,\bar i}=u_{a\bar s\bar i}\hat F_{s\bar b}+u_{a\bar b\bar i} \sigma_b+\hat F_{a\bar b\bar i}\sigma_b +\sigma_a\hat F_{a\bar b,\bar i}+\hat F_{a\bar s}u_{s\bar b,\bar i}+\sigma_a u_{a\bar b,\bar i}.
\]
As an application of Lemma \ref{lemma-ag-trace}, we have
\begin{align*}
2\mathrm{Re}\left(\sigma_i \eta^{p\bar b}\eta^{a\bar q}\hat F_{p\bar q,i} (u_{a\bar s,\bar i}\hat F_{s\bar b}+ \hat F_{a\bar s}u_{s\bar b,\bar i})\right)&\leq \frac{1}{200}\eta^{a\bar q}u_{a\bar s,\bar i}u_{s\bar q,i}+C\Theta,\\
2\mathrm{Re}\left(\sigma_i(\sigma_a+\sigma_b)\eta^{p\bar b}\eta^{a\bar q}\hat F_{p\bar q,i} u_{a\bar b,\bar i}\right)&\leq \frac{1}{200}\eta^{p\bar b}\eta^{a\bar q}u_{a\bar b,\bar i}u_{p\bar q,i} +C\Theta^2,\\
2\mathrm{Re}\left(\sigma_i(\sigma_a+\sigma_b)\eta^{p\bar b}\eta^{a\bar q}\hat F_{p\bar q,i} \hat F_{a\bar b,\bar i}\right)&\leq C\Theta.
\end{align*}
Adding these inequalities together, we obtain that
\begin{equation}
  \label{eqn-D_4estimate}
  \begin{split}
  D_4&\leq \frac{1}{200} \eta^{a\bar q}u_{a\bar s,\bar i}u_{s\bar q,i}+ \frac{1}{200}\eta^{p\bar b}\eta^{a\bar q}u_{a\bar b,\bar i}u_{p\bar q,i} +C(\Theta+\Theta^2)\\
  &\leq \frac{1}{100}\eta^{a\bar q}u_{a\bar s,\bar i}u_{s\bar q,i}+C(\Theta+\Theta^2)
  \end{split}
\end{equation}
where we have used the fact that $\eta^{-1}< I$ in the last `$\leq$'.

Adding the inequalities \eqref{eqn-D_1estimate}, \eqref{eqn-D_2estimate}, \eqref{eqn-D_3estimate} and \eqref{eqn-D_4estimate} together, we get the following estimate for $A_2$,
\begin{equation}
\label{eqn-A_2estimate}
A_2\leq \frac{1}{100}\eta^{a\bar q}u_{a\bar s i}u_{s\bar q \bar i}+C(\Theta+\Theta^2).
\end{equation}

\noindent \textbf{Estimate of $A_3$:}

At last, we estimate $A_3$. Since $\eta^{-1}\leq I$ and
\[
-C_0\leq R_{i\bar jk\bar l}\leq C_0
\]
for some positive constant $C_0$, we get the following inequality
\begin{equation}
\label{eqn-A_3estimate}
A_3=2\operatorname{\mathop{Re}}\left( \sigma_i(\sigma_q- \sigma_i) R_{i\bar i p\bar q} \eta^{p\bar q}\right)\leq C\Theta.
\end{equation}

Adding the inequalities \eqref{eqn-A_1estimate}, \eqref{eqn-A_2estimate} and \eqref{eqn-A_3estimate} together and inserting into \eqref{eqn-Theta}, we obtain the inequality desired.\qedhere

\end{proof}

We also have the following estimate for $\Theta'=|\nabla\nabla u|_\omega^2$:

\begin{yl}
  \label{lemma-evo-estimate-22}
  There exists constants $C_1, C_2, C_3$ depending only on $(X,\omega)$ and $\hat F$ such that
  \[
  \begin{split}
  (\frac{\partial }{\partial t} -\Delta_\eta)\Theta'&\leq(-\frac{1}{2}+C_1\Theta+C_2\Theta')\eta^{a\bar l} u_{a\bar s\bar p}u_{s\bar lp}+C_3(\Theta+\Theta^2+\Theta'+\Theta'^2+|\nabla u|_\omega^2).
  \end{split}
  \]
\end{yl}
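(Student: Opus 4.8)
The plan is to run the evolution equation~\eqref{eqn-double der} for $\Theta'=|\nabla\nabla u|_\omega^2$ through the same machine used to prove Lemma~\ref{lemma-evo-estimate-2}. Fix $p\in X$ and choose normal coordinates there with $g_{i\bar j}(p)=\delta_{i\bar j}$, $u_{i\bar j}(p)=\sigma_i\delta_{i\bar j}$, $\partial_k g_{i\bar j}(p)=0$; note $u_{\bar j\bar q}$ (the $(0,2)$-Hessian) need not be diagonal, but $|u_{\bar j\bar q}|\le C\sqrt{\Theta'}$. Substituting $F_{k\bar l}=\hat F_{k\bar l}+u_{k\bar l}$ and $F_{k\bar l,i}=\hat F_{k\bar l,i}+u_{k\bar li}$ into~\eqref{eqn-double der}, first isolate the good negative term $-\eta^{k\bar l}g^{i\bar j}g^{p\bar q}(u_{ipk}u_{\bar j\bar q\bar l}+u_{ip\bar l}u_{\bar j\bar qk})$; the first summand is $\le0$ and is simply discarded, while the second summand provides the term $-\eta^{a\bar l}u_{a\bar s\bar p}u_{s\bar lp}$ of the statement (after a harmless reordering of covariant derivatives whose commutator costs only curvature-type lower-order terms absorbed below). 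The rest is organized, in complete analogy with $A_1,A_2,A_3$ of Lemma~\ref{lemma-evo-estimate-2}, as: $A_1=-2\operatorname{Re}(g^{i\bar j}g^{p\bar q}\eta^{k\bar b}\eta^{a\bar l}\eta_{a\bar b,p}u_{k\bar li}u_{\bar j\bar q})$, from the $u$-part of $F_{k\bar l,i}$; $A_2$, the term $2\operatorname{Re}(g^{i\bar j}g^{p\bar q}\eta^{k\bar l}\hat F_{k\bar l,ip}u_{\bar j\bar q})$ together with the $\hat F_{k\bar l,i}$-part of the $\eta_{a\bar b,p}$-term; and $A_3$, the curvature terms, including those produced by the Ricci identity $u_{k\bar lip}-u_{ip\bar lk}=u_{ak}R_{i\bar a\bar lp}+u_{ia}R_{k\bar a\bar lp}+u_{ap}R_{i\bar a\bar lk}+u_aR_{i\bar a\bar lk,p}$.

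For $A_1$, split $\eta_{a\bar b,p}$, just as $\eta_{a\bar b,\bar i}$ was split into its $B_1,B_2$ parts in Lemma~\ref{lemma-evo-estimate-2}, according to whether the covariant derivative falls on an $\hat F$-factor or on a $u$-factor; in the latter case only the third derivatives $u_{a\bar tp}$ of the same $(2,1)$-type as the good term appear. Writing $\eta^{-1}=\eta_1\eta_1$ with $\eta_1$ a positive Hermitian square root and applying Lemma~\ref{lemma-ag-trace}, peel off $u_{\bar j\bar q}$ (bounded by $C\sqrt{\Theta'}$), $u_{k\bar li}$, and a factor $F$ with $-C(1+\sqrt{\Theta})I\le F\le C(1+\sqrt{\Theta})I$; since $\eta=I+F^2$ commutes with $F$, the matrix $\eta^{-1}F^2$ is $\le C(1+\Theta)I$, exactly as in the $B_{1,1}$/$B_{2,1}$ estimates. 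Every product of two $(2,1)$-third derivatives that occurs is absorbed into $\eta^{a\bar l}u_{a\bar s\bar p}u_{s\bar lp}$ with an arbitrarily small constant, at the cost of extra factors $\Theta$ (from squared $\sqrt{\Theta}$'s sitting inside $\eta_{a\bar b,p}$) and $\Theta'$ (from $|u_{\bar j\bar q}|^2$) in front — this is precisely the origin of the coefficients $C_1\Theta$ and $C_2\Theta'$ in the statement — leaving a remainder in $C(\Theta+\Theta^2+\Theta'+\Theta'^2)$. So $A_1\le(\varepsilon+C_1\Theta+C_2\Theta')\eta^{a\bar l}u_{a\bar s\bar p}u_{s\bar lp}+C(\Theta+\Theta^2+\Theta'+\Theta'^2)$.

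The step I expect to be the real obstacle is the estimate of $A_2$, where the dHYM hypothesis on $\hat h$ enters. Write $\eta=\hat\eta+(\eta-\hat\eta)$ and recall from~\eqref{eqn-hateta-eta} that $-C(\Theta+\sqrt{\Theta})I\le\hat\eta-\eta\le C(\Theta+\sqrt{\Theta})I$, a bound depending only on $\Theta$ since $\eta-\hat\eta$ is built from $u_{i\bar j}$ alone. The ``principal part'' of $A_2$, obtained by putting $\hat\eta$ everywhere, is $2\operatorname{Re}\big(g^{i\bar j}g^{p\bar q}u_{\bar j\bar q}(\hat\eta^{k\bar l}\hat F_{k\bar l,ip}-\hat\eta^{k\bar b}\hat\eta^{a\bar l}\hat\eta_{a\bar b,p}\hat F_{k\bar l,i})\big)$, and this vanishes identically: it is the second-order dHYM identity $\hat\eta^{k\bar l}\hat F_{k\bar l,ip}=\hat\eta^{k\bar b}\hat\eta^{a\bar l}\hat\eta_{a\bar b,p}\hat F_{k\bar l,i}$, obtained by taking one more unbarred covariant derivative of the first equation of~\eqref{eqn-1st-derivative}, by exactly the computation that produced~\eqref{eqn-2nd-derivative} and~\eqref{eqn-2nd-derivative2}, now with both differentiation indices of holomorphic type. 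What remains is a sum of error terms, each carrying one factor of $\eta-\hat\eta$, of $\eta^{-1}-\hat\eta^{-1}$, or of $(\eta-\hat\eta)_{,p}$ — all of size $O(\Theta+\sqrt{\Theta})$ once Lemma~\ref{lemma-ag-trace} has been used to move into the good term the third derivatives $u_{a\bar tp}$ hidden inside $(\eta-\hat\eta)_{,p}$ — times bounded covariant derivatives of $\hat F$ and $|u_{\bar j\bar q}|\le C\sqrt{\Theta'}$; hence $A_2\le\varepsilon\,\eta^{a\bar l}u_{a\bar s\bar p}u_{s\bar lp}+C(\Theta+\Theta^2+\Theta'+\Theta'^2)$.

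Finally $A_3$ is purely algebraic: with $|R|,|\nabla R|\le C_0$, $|u_{ak}|,|u_{ia}|,|u_{ap}|,|u_{\bar j\bar q}|\le C\sqrt{\Theta'}$ and $|u_a|\le C|\nabla u|_\omega$, one obtains $A_3\le C(\Theta'+\Theta'^2+|\nabla u|_\omega^2)$, the last summand coming from $u_aR_{i\bar a\bar lk,p}u_{\bar j\bar q}$. Adding the three estimates back into~\eqref{eqn-double der} and choosing all the $\varepsilon$'s from the applications of Lemma~\ref{lemma-ag-trace} so that their sum is $<\tfrac12$, the coefficient of $\eta^{a\bar l}u_{a\bar s\bar p}u_{s\bar lp}$ becomes $-1+\tfrac12+C_1\Theta+C_2\Theta'=-\tfrac12+C_1\Theta+C_2\Theta'$ and the remainder lies in $C_3(\Theta+\Theta^2+\Theta'+\Theta'^2+|\nabla u|_\omega^2)$, which is the asserted inequality. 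The only genuinely delicate points are the bookkeeping in $A_1$ — ensuring that every term cubic in the third derivatives is swallowed by the single good term with a small constant, the leftover coefficients landing precisely as $C_1\Theta+C_2\Theta'$ — and the verification of the $\hat\eta$-principal-part cancellation in $A_2$; everything else runs parallel to the proof of Lemma~\ref{lemma-evo-estimate-2}.
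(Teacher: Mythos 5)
Your argument follows the paper's own proof essentially verbatim: the same decomposition of \eqref{eqn-double der} into the $u$-part of $F_{k\bar l,i}$, the $\hat F$-terms and the curvature terms, the same use of Lemma \ref{lemma-ag-trace} with the square root of $\eta$ and the bound $\eta^{-1}F^2\leq C(1+\Theta)I$, the bound \eqref{eqn-hateta-eta}, and the differentiated dHYM identity (the unbarred analogue of \eqref{eqn-2nd-derivative2}) to cancel the $\hat\eta$-principal part of the $\hat F$-terms. The only cosmetic difference is the bookkeeping of which group absorbs the $|\nabla u|_\omega^2$ remainder (your assignment of it to the curvature terms is the natural one), so the proposal is correct and takes the paper's route.
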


\begin{proof}
The proof is similar to the proof of Lemma 3.6. We just give the key estimate and omit the detail here.

We choose the same normal coordinate systems near $p\in X$ as in Lemma \ref{lemma-evo-estimate-2}. We should pay attention that we can not diagonalize $u_{ij}$ at the same time. We can rewrite the evolution equation of $\Theta'=|\nabla\nabla u|_\omega^2$ as follow
  \[
\begin{split}
&\repl{+}(\frac{\partial }{\partial t}-\Delta_{\eta})\Theta'\\
=&-\eta^{k\bar l}( u_{i p k}u_{\bar i\bar p\bar l}+u_{i p\bar l}u_{\bar i \bar p k})-2\operatorname{\mathop{Re}}(\eta^{k\bar b}\eta^{a\bar l} \eta_{a\bar b,p} F_{k\bar l,i}u_{\bar i\bar p})+2\operatorname{\mathop{Re}}(\eta^{k\bar l} \hat F_{k\bar l,ip}u_{\bar i\bar p})\\
&-2\operatorname{\mathop{Re}}\left(\eta^{k\bar l} u_{\bar i \bar p}(u_{ak}R_{i\bar a p\bar l}+u_{ia} R_{k\bar a p\bar l}+u_{ap}R_{i\bar a k\bar l}+u_{a}R_{i\bar a k\bar l,p})\right)\\
    =&-\eta^{k\bar l}( u_{i p k}u_{\bar i\bar p\bar l}+u_{i p\bar l}u_{\bar i \bar p k})-2\operatorname{\mathop{Re}}(\eta^{k\bar b}\eta^{a\bar l} \eta_{a\bar b,p} u_{k\bar li}u_{\bar i\bar p})\\
    &-2\operatorname{\mathop{Re}}( \eta^{k\bar l} u_{\bar i \bar p}(u_{ak}R_{i\bar a p\bar l}+u_{ia} R_{k\bar a p\bar l}+u_{ap}R_{i\bar a k\bar l}+u_{a}R_{i\bar a k\bar l,p}))\\
    &+2\operatorname{\mathop{Re}}(\eta^{k\bar l} \hat F_{k\bar l,ip}u_{\bar i\bar p} -\eta^{k\bar b}\eta^{a\bar l} \eta_{a\bar b,p} \hat F_{k\bar l,i}u_{\bar i\bar p} )\\
    =&-\eta^{k\bar l}( u_{i p k}u_{\bar i\bar p\bar l}+u_{i p\bar l}u_{\bar i \bar p k})+A'_1+A'_2+A'_3\\
\end{split}
\]
where we set
\[
\begin{split}
  A'_1=&-2\operatorname{\mathop{Re}}(\eta^{k\bar b}\eta^{a\bar l} \eta_{a\bar b,p} u_{k\bar li}u_{\bar i\bar p}),\\
  A'_2=&-2\operatorname{\mathop{Re}}( \eta^{k\bar l} u_{\bar i \bar p}(u_{ak}R_{i\bar a p\bar l}+u_{ia} R_{k\bar a p\bar l}+u_{ap}R_{i\bar a k\bar l}+u_{a}R_{i\bar a k\bar l,p})),\\
      A'_3=&\repl{-}2\operatorname{\mathop{Re}}(\eta^{k\bar l} \hat F_{k\bar l,ip}u_{\bar i\bar p} -\eta^{k\bar b}\eta^{a\bar l} \eta_{a\bar b,p} \hat F_{k\bar l,i}u_{\bar i\bar p} ).\\
\end{split}
\]

By similar arguments\footnote{The difference here is that we can not diagonalize the holomorphic Hessian $\{u_{ij}\}$ and complex $\{u_{i\bar j}\}$ at the same time. But this does not cause trouble in our application of Lemma \ref{lemma-ag-trace}.} of estimating $A_1$, $A_2$ and $A_3$ in Lemma \ref{lemma-evo-2}, we get the following estimates for $A'_1$, $A'_2$ and $A'_3$
\[
\begin{split}
A'_1\leq &(\frac{1}{100}+C\Theta+C\Theta')\eta^{k\bar b}u_{l\bar bp}u_{k\bar l\bar p} +C(\Theta +\Theta^2 +\Theta'^2)\\
A'_2\leq& \frac{1}{100}\eta^{a\bar l} u_{a\bar s\bar p}u_{s\bar lp} +C(\Theta +\Theta^2 +\Theta'^2)\\
A'_3\leq &C(\Theta'+ |\nabla u|^2_\omega).
\end{split}
\]
Adding them together, we finish the proof of the lemma.\qedhere

\end{proof}

\section{Stability of the line bundle MCF}
\label{section-4}
In this section, we will prove that the $C^2$-norm of $u(,t)$ keeps small along the line bundle MCF as long as $C^2$-norm of $u_0$ is small enough. For convenience, we set
\[
\tilde \Theta =|\nabla \bar \nabla u|_\omega^2+|\nabla \nabla u|_\omega^2=\Theta+\Theta'.
\]
And we consider the following auxiliary function
\[
\begin{split}
Q&=|\nabla \bar \nabla u|^2_\omega +|\nabla \nabla u|^2_\omega+K_1 |\nabla u|^2_\omega+ \frac{K_2}{2}(u-u(p,0))^2\\
&=\Theta+\Theta'+K_1 |\nabla u|^2_\omega+ \frac{K_2}{2}(u-u(p, 0))^2
\end{split}
\]
where $K_1$ and $K_2$ are constant to be determined, $p$ is a fixed point on $X$. Since $X$ is compact, we have the following lemma according to differential mean value formula.

\begin{yl}
\label{lemma-smalle}
  There exists a positive constant $C$ depending only on the bounded geometry $(X,\omega)$ such that
  \[
  \begin{cases}
    Q(\cdot,0)\leq C|D^2u_0|^2_{L^\infty}& \text{, at }t=0\\
    Q(\cdot,t)\geq |D^2u_t(\cdot)|^2 &\text{, at any }t\geq 0.
  \end{cases}
  \]
\end{yl}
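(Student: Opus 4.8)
The plan is to prove the two inequalities separately. The second one carries no constant and is immediate from the shape of $Q$; the first one is the "differential mean value" statement quoted just before the lemma, and the only input beyond soft calculus is that a smooth function on the compact manifold $X$ has a critical point.

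For the lower bound $Q(\cdot,t)\ge|D^2u_t(\cdot)|^2$: the last two terms of $Q$, namely $K_1|\nabla u_t|^2_\omega$ and $\frac{K_2}{2}(u_t-u_0(p))^2$, are manifestly nonnegative, so $Q(\cdot,t)\ge\Theta+\Theta'=|\nabla\bar\nabla u_t|^2_\omega+|\nabla\nabla u_t|^2_\omega$. With the normalization fixed in Section \ref{section-2} this sum is exactly (or, with any other convention, controls from above up to a fixed dimensional constant that may be absorbed) the squared norm of the full complex Hessian of $u_t$, which is $|D^2u_t|^2$. No smallness of the initial data and no information from the flow is used here.

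For the upper bound at $t=0$ I would estimate each of the four terms of $Q(\cdot,0)$ pointwise by $\|D^2u_0\|^2_{L^\infty}$ up to a constant depending only on $(X,\omega)$. Compactness enters first: since $u_0\in C^\infty(X)$ attains its maximum, there is a point $q\in X$ with $\nabla u_0(q)=0$, and integrating $\nabla^2u_0$ along a minimizing geodesic from $q$ to an arbitrary $x$ gives $|\nabla u_0(x)|\le\operatorname{diam}(X)\,\|D^2u_0\|_{L^\infty}$. Integrating once more along a minimizing geodesic from $p$ to $x$ then yields $|u_0(x)-u_0(p)|\le\operatorname{diam}(X)\,\|\nabla u_0\|_{L^\infty}\le\operatorname{diam}(X)^2\|D^2u_0\|_{L^\infty}$. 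For the second-order terms, $\nabla\bar\nabla u_0=\partial\bar\partial u_0$ on a Kähler manifold, so $\Theta(\cdot,0)\le C\|D^2u_0\|^2_{L^\infty}$ directly, while $\nabla\nabla u_0=\partial\partial u_0-\Gamma\ast\partial u_0$ differs from the coordinate Hessian only by a Christoffel term, whence $\Theta'(\cdot,0)\le C(\|D^2u_0\|^2_{L^\infty}+\|\nabla u_0\|^2_{L^\infty})\le C\|D^2u_0\|^2_{L^\infty}$ using the gradient bound just obtained. Adding the four estimates with the (by now fixed) weights $K_1,K_2$ gives $Q(\cdot,0)\le C(1+K_1+K_2)\|D^2u_0\|^2_{L^\infty}$, which is the asserted bound.

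I do not expect a genuine obstacle: the statement is a soft consequence of compactness together with the nonnegativity of two terms of $Q$. The only points requiring a word of care are the fixed-constant pointwise comparison between the real covariant Hessian and the complex Hessian $\Theta+\Theta'$ on a Kähler manifold — which is what lets one pass freely between $\|D^2\cdot\|$ and the quantities appearing in $Q$ — and the observation that the Christoffel correction in $\nabla\nabla u_0$ is lower order and is absorbed once the gradient bound is in hand.
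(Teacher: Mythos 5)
Your argument is correct and is essentially the paper's intended one: the paper offers no detailed proof of this lemma, only the remark that it follows from compactness of $X$ and the differential mean value formula, which is exactly the geodesic-integration argument you spell out (gradient vanishes at an extremum, hence $|\nabla u_0|$ and $|u_0-u_0(p)|$ are controlled by $\operatorname{diam}(X)\,\|D^2u_0\|_{L^\infty}$), together with the trivial nonnegativity of the lower-order terms for the bound $Q\geq\Theta+\Theta'$. The only point to note is the norm convention for $D^2u$ versus $\Theta+\Theta'$, which you flag correctly and which is harmless since the lemma is only used up to a uniform constant $C_\omega$ in Theorem \ref{thm-smallhessian}.
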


\begin{zj}
  We know that $D^2 u$ can be controlled by $Q$ at all time and $Q$ can be controlled by $D^2u$ at time $t=0$. So in the proof of the following theorem, we will consider the smallness of $Q$ along \added{the} line bundle \replaced{MCF}{mean curvature flow} instead of $D^2 u$.
\end{zj}

\begin{thm}
  \label{thm-smallhessian}
  There exists a constant $\delta_0>0$ such that if $||D^2 u_0||_{L^\infty}\leq \delta_0$, then $||D^2 u_t||_{L^\infty}\leq C_\omega\delta_0$ along the line bundle MCF \added{for all $t\geq 0$} and $C_\omega$ is a uniform constant depending only on the bounded geometry of $(X,\omega)$.
\end{thm}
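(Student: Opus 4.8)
The plan is to run a maximum principle argument on the auxiliary function $Q$ to show that its $L^\infty$-norm stays comparably small, and then invoke Lemma \ref{lemma-smalle} to transfer this back to $\|D^2u_t\|_{L^\infty}$. First I would combine the evolution inequalities already established: Lemma \ref{lemma-evo-estimate-2} for $\Theta$, Lemma \ref{lemma-evo-estimate-22} for $\Theta'$, Lemma \ref{lemma-evo-1} for $|\nabla u|_\omega^2$ (with Cauchy-Schwarz applied to the $\hat F_{p\bar q,i}u_{\bar j}$ term to absorb it into $|\nabla u|_\omega^2$ plus a small multiple of $\eta^{p\bar q}g^{i\bar j}u_{i\bar q}u_{\bar j p}$), and Lemma \ref{lemma-evo-0} for $(u-u(p,0))^2$, to get an inequality of the schematic form
\[
(\tfrac{\partial}{\partial t}-\Delta_\eta)Q\leq \left(-\tfrac12+C_1\tilde\Theta\right)\eta^{a\bar q}u_{a\bar s i}u_{s\bar q\bar i}+C_2(\tilde\Theta+\tilde\Theta^2)+C_3|\nabla u|_\omega^2+C_4 K_1(\ldots).
\]
The key observation is that the good negative third-order term $-\tfrac12\eta^{a\bar q}u_{a\bar s i}u_{s\bar q\bar i}$ is available, and once $\tilde\Theta$ is small (say $\tilde\Theta\leq 1/(4C_1)$), the coefficient in front of it is negative, so that term can be dropped. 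We then choose $K_1$ large enough so that the $-\eta^{p\bar q}g^{i\bar j}(u_{ip}u_{\bar j\bar q}+u_{i\bar q}u_{\bar jp})$ term coming from $K_1|\nabla u|_\omega^2$ in Lemma \ref{lemma-evo-1} dominates all the remaining terms quadratic in second derivatives (those not multiplied by a small factor), and then $K_2$ large relative to $K_1$ so that $-K_2\eta^{p\bar q}u_pu_{\bar q}$ from Lemma \ref{lemma-evo-0} controls the $K_1|\nabla u|_\omega^2$-type gradient terms. The outcome should be: there is $\varepsilon_0>0$ so that whenever $\tilde\Theta\leq \varepsilon_0$ pointwise, $(\tfrac{\partial}{\partial t}-\Delta_\eta)Q\leq 0$ at that point (or at least $\leq C Q^{3/2}$ with a harmless lower-order remainder — but the cleaner route is to get $\leq 0$).

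Next I would set up the continuity/bootstrap argument. Let $\delta_0$ be small, to be fixed, with $\|D^2u_0\|_{L^\infty}\leq\delta_0$. By Lemma \ref{lemma-smalle}, $\sup_X Q(\cdot,0)\leq C\delta_0^2$. Let $T^*$ be the supremum of times $T$ such that $\sup_X Q(\cdot,t)\leq \varepsilon_0$ for all $t\in[0,T]$ (and hence $\tilde\Theta\leq\varepsilon_0$ there, so the favorable sign holds); by continuity and short-time existence $T^*>0$ provided $C\delta_0^2<\varepsilon_0$. On $[0,T^*)$ the differential inequality $(\tfrac{\partial}{\partial t}-\Delta_\eta)Q\leq 0$ holds, so by the parabolic maximum principle $\sup_X Q(\cdot,t)\leq \sup_X Q(\cdot,0)\leq C\delta_0^2$ for all $t\in[0,T^*)$. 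If $\delta_0$ is chosen so small that $C\delta_0^2<\varepsilon_0/2$, then $\sup_X Q(\cdot,t)$ stays bounded strictly below $\varepsilon_0$, which by continuity forces $T^*=+\infty$ (it cannot be finite, since at a finite $T^*$ we would have $\sup Q(\cdot,T^*)\leq\varepsilon_0/2<\varepsilon_0$, contradicting maximality — here one also uses that as long as $Q$ stays bounded the flow continues, via the extension criterion from the Jacob-Yau extension theorem together with $\tilde\Theta$ bounded). Hence $\sup_X Q(\cdot,t)\leq C\delta_0^2$ for all $t\geq 0$, and the second line of Lemma \ref{lemma-smalle} gives $\|D^2u_t\|_{L^\infty}^2\leq Q(\cdot,t)\leq C\delta_0^2$, i.e. $\|D^2u_t\|_{L^\infty}\leq C_\omega\delta_0$ with $C_\omega=\sqrt{C}$.

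The main obstacle I anticipate is bookkeeping in the assembly of the evolution inequality for $Q$: one must verify that every term involving more than two derivatives of $u$ is either the single good negative term $-\tfrac12\eta^{a\bar q}u_{a\bar si}u_{s\bar q\bar i}$ (which absorbs the $C_1\tilde\Theta$ third-order contributions once $\tilde\Theta\leq\varepsilon_0$) or is of the form $-\eta g g(uu + uu)$ with a fixed positive sign that can be used (the ``$-|\nabla\bar\nabla u|$-type'' and ``$-|\nabla\nabla u|$-type'' Hessian-squared terms from $K_1|\nabla u|_\omega^2$), while the genuinely dangerous terms are at worst quadratic in second derivatives with small or controllable coefficients, polynomial in $\tilde\Theta$, plus the gradient term $|\nabla u|_\omega^2$ that $K_2(u-u(p,0))^2$ handles. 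A subtle point is the order of choosing constants: $\varepsilon_0$ must be chosen first (from $C_1$ in Lemmas \ref{lemma-evo-estimate-2}, \ref{lemma-evo-estimate-22} and the coefficient $C_2$ of the $\tilde\Theta^2$ term), then $K_1$ depending on $\varepsilon_0$ and the Riemann curvature bound $C_0$, then $K_2$ depending on $K_1$, and only then $\delta_0$ small enough that $C\delta_0^2<\varepsilon_0/2$; none of these constants depends on $t$ or on the (a priori) length of the existence interval, which is what makes the bootstrap close. One should also note that $\eta>I$, so $\Delta_\eta$ is uniformly elliptic as long as $F$ — equivalently $D^2u$ — is bounded, which is exactly what the bootstrap maintains, so there is no circularity.
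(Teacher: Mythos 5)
Your overall scheme---the auxiliary function $Q=\Theta+\Theta'+K_1|\nabla u|_\omega^2+\frac{K_2}{2}(u-u(p,0))^2$, a maximum-principle/bootstrap argument keeping $Q$ small, then Lemma \ref{lemma-smalle} to pass back to $\|D^2u_t\|_{L^\infty}$---is the same as the paper's. But there is a genuine gap at the one place where the hypothesis that $\hat h$ is a dHYM metric must actually be used. The evolution of $K_1|\nabla u|_\omega^2$ (Lemma \ref{lemma-evo-1}) produces the term $2K_1\operatorname{Re}\bigl(\eta^{p\bar q}g^{i\bar j}\hat F_{p\bar q,i}u_{\bar j}\bigr)$, and you propose to handle it ``by Cauchy--Schwarz, absorbing it into $|\nabla u|_\omega^2$ plus a small multiple of $\eta^{p\bar q}g^{i\bar j}u_{i\bar q}u_{\bar jp}$''. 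That cannot work: this term contains no second derivatives of $u$, so no Hessian-squared piece can come out of Cauchy--Schwarz; what comes out is $C(\varepsilon)K_1|\nabla u|_\omega^2+\varepsilon K_1C$, i.e.\ an inhomogeneous constant of size $O(K_1)$ not multiplied by any small quantity. With such a constant on the right-hand side the maximum principle only yields $Q(t)\le Q(0)+CK_1t$, so smallness of $Q$ is not preserved for all $t\ge0$ and your bootstrap does not close (it would give smallness only on a time interval of length comparable to $\delta_0$). The paper removes this term using precisely the dHYM condition: by \eqref{eqn-1st-derivative} one has $\hat\eta^{p\bar q}\hat F_{p\bar q,i}=0$, hence $\eta^{p\bar q}\hat F_{p\bar q,i}=(\eta^{p\bar q}-\hat\eta^{p\bar q})\hat F_{p\bar q,i}$, and $\eta-\hat\eta=O(\sqrt{\Theta}+\Theta)$ by \eqref{eqn-hateta-eta}; this turns the term into $K_1\rho_0(\Theta+\Theta^2)+C(\rho_0)K_1|\nabla u|_\omega^2$, quadratic in the small quantities, which is then absorbed by $-K_1\eta^{p\bar q}u_{i\bar q}u_{p\bar i}$ and $-K_2\eta^{p\bar q}u_pu_{\bar q}$. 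This is exactly what the remark preceding Lemma \ref{lemma-evo-estimate-2} singles out as the key to Theorem \ref{thm-smallhessian}, and your proposal never invokes it.

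Two further points. First, the evolution of $\frac{K_2}{2}(u-u(p,0))^2$ contains $K_2(u-u(p,0))\bigl(\theta(F_u)-\hat\theta-\Delta_\eta u\bigr)$, and you do not say how $\theta(F_u)-\hat\theta$ is to be compared with $\Delta_\eta u$; the paper applies the mean value theorem to $s\mapsto\theta(\hat F+s\partial\bar{\partial}u)$ to get $\theta(F_u)-\hat\theta-\Delta_\eta u\le C(1+\Theta)\Theta$, again quadratic in the Hessian, so the product is $\le CK_2Q^{1/2}(\Theta+\Theta^2)$ and can be absorbed; a crude bound like $|\theta-\hat\theta|\le C$ would reintroduce a term of size $K_2Q^{1/2}$ that the negative terms cannot dominate. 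Second, your continuity argument invokes the Jacob--Yau extension criterion ``together with $\tilde\Theta$ bounded'' to force $T^*=+\infty$; that criterion requires a bound on $|\nabla F|$, i.e.\ third-order quantities, which are not available at this stage (they are supplied later, in Section \ref{section-5}). For the statement at hand this is harmless---it suffices to prove the bound on the maximal existence interval, which is all the paper does here---but long-time existence should not be folded into this theorem.
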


\begin{proof}
According to the arguments above, we have the following evolution inequality for $Q$,
\begin{equation}
\label{eqn-Q-1}
\begin{split}
&(\frac{\partial}{\partial t} -\Delta_\eta) Q\\
\leq &\repl{+}\eta^{p\bar q}(u_{p\bar s i}u_{s\bar q \bar i}+u_{p\bar s\bar i}u_{\bar q s i})(-\frac{1}{2}+C\Theta+C\Theta')\\
& +C(\Theta +\Theta'+\Theta^2+\Theta'^2+|\nabla u|^2_\omega)+K_2 (u-u(p,0))(\theta(F_u)-\hat \theta-\Delta_\eta u)\\
&-K_2 \eta^{p\bar q}u_{p}u_{\bar q}-K_1 \eta^{p\bar q}R_{p\bar q j\bar i}u_i u_{\bar j}- K_1\eta^{p\bar q}(u_{ip}u_{\bar i \bar q}+u_{i\bar q}u_{\bar ip})\\
&+2 K_1 \operatorname{\mathop{Re}}(\eta^{p\bar{q}}\hat F_{p\bar{q},i}u_{\bar{i}}).\\
\end{split}
\end{equation}

For convenience, we set
\[
I_1=2 K_1 \operatorname{\mathop{Re}}(\eta^{p\bar{q}}\hat F_{p\bar{q},i}u_{\bar{i}}).
\]
We can estimate $I_1$ in the following way
\[
\begin{split}
I_1=&2K_1\operatorname{\mathop{Re}}((\eta^{p\bar q}-\hat\eta^{p\bar q})\hat F_{p\bar q, i}u_{\bar i})\\
=&2K_1\operatorname{\mathop{Re}}(\eta^{p\bar t}(\hat \eta_{s\bar t}-\eta_{s\bar t})\hat \eta^{s\bar q}\hat F_{p\bar{q},i}u_{\bar{i}})\\
=&2K_1\operatorname{\mathop{Re}}(\eta^{p\bar t}(\hat F_{s\bar a}\hat F_{a\bar t}-F_{s\bar a}F_{a\bar t})\hat \eta^{s\bar q}\hat F_{p\bar{q},i}u_{\bar{i}})\\
=&2K_1\operatorname{\mathop{Re}}(\eta^{p\bar t}(\hat F_{s\bar t}\sigma_t+\hat F_{s\bar t}\sigma_s-\delta_{st}\sigma_s\sigma_t)\hat \eta^{s\bar q}\hat F_{p\bar{q},i}u_{\bar{i}})\\
\leq &K_1\rho_0(\Theta+\Theta^2)+C(\rho_0)K_1|\nabla u|_\omega^2,
\end{split}
\]
where we have used Equation \eqref{eqn-1st-derivative} in the first ``$=$'' and $\rho_0$ is a constant to be decided later.

To estimate the third term in \eqref{eqn-Q-1}, we denote the following endomorphism $K_u$ of $T^{1,0}X$ for every function $u$
\begin{eqnarray*}
K_u&=&g^{i\bar j}(\hat F_{k\bar j}+u_{k\bar j})\frac{\partial}{\partial z^i}\otimes dz^{k}.
\end{eqnarray*}
Indeed, we can view $K_u$ as a matrix-valued function on $X$. The function $\theta(F_{su})$ can be regarded as a function on $s\in \mathbb{R}$ where
\[
F_{su}=\hat F+s\partial \bar \partial u.
\]
Applying differential mean value theorem to $\theta(F_{su})$, we have
\[
\begin{split}
  &\theta(F_u)-\hat \theta -\Delta_\eta u\\
  =&\theta(F_{u})-\theta(\hat F)-\Delta_\eta u\\
  =&\left. \frac{d\theta(F_{su})}{ds}\right|_{s=\xi\in (0,1)}-(I+K_u^2)^{p\bar q} u_{p\bar q}\\
  =&(I+K_{\xi u}^2)^{p\bar q}u_{p\bar q}-(I+K_u^2)^{p\bar q} u_{p\bar q}\\
  =&((I+\hat K^2)^{-1}(\hat K^2-K_{\xi u}^2) (I+K_{\xi u}^2)^{-1})^{p\bar q} u_{p\bar q}\\
  =&-\operatorname{\mathop{Tr}}((I+\hat K^2)^{-1}(\xi \hat KU+\xi U\hat K+\xi^2 U^2)(I+K_{\xi u}^2)^{-1}U)\\
  \leq & C(1+ \Theta) \Theta
\end{split}
\]where $U=\{u_{i\bar j}\}$ is the complex hessian of $u$ and $C$ is a constant depending only on $\hat F$. Thus, the evolution inequality of $Q$ can be rewritten as follow
\begin{equation}
\label{eqn-QQ}
\begin{split}
(\frac{\partial}{\partial t} -\Delta_\eta) Q\leq& \repl{+}\eta^{p\bar q}(u_{p\bar s i}u_{s\bar q \bar i}+u_{p\bar s\bar i}u_{\bar q s i})(-\frac{1}{2}+C\Theta +C\Theta')\\
&-K_1\eta^{p\bar q}u_{i p}u_{\bar q \bar i}+C(\Theta'+\Theta'^2)-K_1\eta^{p\bar q}u_{i\bar q}u_{p\bar i}\\
&+(C+K_1\rho_0)( \Theta+ \Theta^2)+CK_2(u-u(p, 0))( \Theta+\Theta^2)\\
&+(C+C(\rho_0)K_1)|\nabla u|^2_\omega -K_2 \eta^{p\bar q}u_{p}u_{\bar q}-K_1 \eta^{p\bar q}R_{p\bar q j\bar i}u_i u_{\bar j}\\
=& \repl{+}\eta^{p\bar q}(u_{p\bar s i}u_{s\bar q \bar i}+u_{p\bar s\bar i}u_{\bar q s i})(-\frac{1}{2}+C\Theta +C\Theta')\\
&+Q_1+Q_2+Q_3+Q_4\\
%&-K_1\eta^{p\bar q}u_{i q}u_{p i}+C(\Theta'+\Theta'^2)-K_1\eta^{p\bar q}u_{i\bar q}u_{p\bar i}\\
%&+(C+K_1\rho_0)( \Theta+ \Theta^2)+CK_2(u-u(p, 0))( \Theta+\Theta^2)\\
%&-K_2 \eta^{p\bar q}u_{p}u_{\bar q}+(C+C(\rho_0)K_1)|\nabla u|^2_\omega-K_1 \eta^{p\bar q}R_{p\bar q j\bar i}u_i u_{\bar j}\\
%\leq &\eta^{p\bar l}u_{i\bar l k}u_{k\bar i \bar q}(-1+C\Theta) +(C+K_1\rho_0)(\Theta+\Theta^2)+K_2(u-u(p, 0))(\Theta+\Theta^2)\\
%&+(C+C(\rho_0)K_1)|\nabla u|^2_\omega-K_2 \eta^{p\bar q}u_{p}u_{\bar q}- K_1\eta^{p\bar q}(u_{ip}u_{\bar i \bar q}+u_{i\bar q}u_{\bar ip}).\\
\end{split}
\end{equation}
where we denote
\[
\begin{split}
  Q_1=&-K_1\eta^{p\bar q}u_{i p}u_{\bar q\bar i}+C(\Theta'+\Theta'^2),\\
  Q_2=&-K_1\eta^{p\bar q}u_{i\bar q}u_{p\bar i}+(C+K_1\rho_0)( \Theta+ \Theta^2),\\
  Q_3=&\repl{+}CK_2(u-u(p, 0))( \Theta+\Theta^2),\\
  Q_4=&-K_2 \eta^{p\bar q}u_{p}u_{\bar q}+(C+C(\rho_0)K_1)|\nabla u|^2_\omega-K_1 \eta^{p\bar q}R_{p\bar q j\bar i}u_i u_{\bar j}.
\end{split}
\]

\noindent \textbf{Estimate of $Q_1$:}

We first deal with $Q_1$. Indeed, we can estimate $Q_1$ as follow
\[
\begin{split}
  Q_1=&-K_1 \hat \eta^{p\bar q}u_{ip}u_{\bar q \bar i} -K_1(\eta^{p\bar q}-\hat \eta^{p\bar q}) u_{ip}u_{\bar q \bar i} +C(\Theta'+\Theta'^2)\\
  \leq &-\hat C K_1 u_{ip}u_{\bar p\bar i} -K_1 \eta^{p\bar t}(\hat \eta_{s\bar t}-\eta_{s\bar t})\hat \eta^{s\bar q} u_{ip}u_{\bar q\bar i} +C(\Theta' +\Theta'^2)\\
  \leq &(-\hat CK_1 +C)\Theta' +C\Theta'^2 +CK_1(\Theta+\sqrt{\Theta}) \eta^{p\bar s}\hat \eta^{s\bar q} u_{ip}u_{\bar q\bar i}\\
  \leq &(-\hat CK_1 +C_1)\Theta' +C\Theta'^2 +CK_1(\Theta+\sqrt{\Theta}) \Theta'
\end{split}
\]
where we apply \eqref{eqn-hateta-eta} in the second ``$\leq$''. We define $K_{1,1}$ to be the positive constant satisfying
\[
-\hat CK_{1,1}+C_1=-2.
\]
Hence, if we take $K_1\geq K_{1,1}$, then there holds
\[
\begin{split}
Q_1&\leq -\Theta' +C\Theta'^2 +CK_1(\Theta+\sqrt{\Theta}) \Theta'\\
&=\Theta'(-2+CK_1(\Theta+\sqrt{\Theta})+C\Theta').
\end{split}
\]

\noindent \textbf{Estimate of $Q_2$:}

We estimate it as follow
\begin{align*}
&- K_1\eta^{p\bar q}u_{i\bar q}u_{\bar ip}+(C+K_1\rho_0)(\Theta+\Theta^2)\\
= &-K_1 \eta^{p\bar q}u_{l\bar q}u_{p\bar l}+(C+K_1\rho_0)(1+\Theta)u_{p\bar l}u_{l\bar q} \delta_{pq}\\
%=&u_{l\bar q}u_{p\bar l}((C+K_1\rho_0)(1+\Theta)\delta_{pq}-K_1 \eta^{p\bar q})\\
=&u_{l\bar q}u_{p\bar l}\eta^{p\bar m}((C+K_1\rho_0)(1+\Theta)\eta_{q\bar m}-K_1\delta_{qm})\\
= &u_{l\bar q}u_{p\bar l}\eta^{p\bar m}\{((C+K_1\rho_0)(1+\Theta)-K_1)\delta_{qm} \\
&\repl{u_{l\bar q}u_{p\bar l}\eta^{p\bar m}}+ (C+K_1\rho_0)(1+\Theta)(\hat F_{q\bar n}+u_{q\bar n})(\hat F_{n\bar m}+u_{n\bar m})\}\\
=&u_{l\bar q}u_{p\bar l}\eta^{p\bar m}\{((C+\rho_0K_1-K_1+(C+\rho_0K_1)\Theta))\delta_{qm}+(C+\rho_0K_1)(1+\Theta)\hat F_{q\bar n}\hat F_{n\bar m}\\
&+(C+\rho_0K_1)(1+\Theta)(\hat F_{q\bar n}u_{n\bar m}+\hat F_{n\bar m}u_{q\bar n}+u_{q\bar n}u_{n\bar m})\}\\
\leq &u_{l\bar q}u_{p\bar l}\eta^{p\bar m}\{C(1+\Theta)\delta_{q m} + (\rho_0 K_1-K_1+CK_1\rho_0 \Theta)\delta_{qm} +K_1\rho_0\hat F_{q\bar n}\hat F_{n\bar m}\\
&+(C+\rho_0K_1)(1+\Theta)(\hat F_{q\bar n}u_{n\bar m}+\hat F_{n\bar m}u_{q\bar n}+u_{q\bar n}u_{n\bar m})\}.
\end{align*}

Since $\hat F$ is a bounded $(1,1)$-form, we can choose $\rho_0\in(0,\frac{1}{4})$ small enough such that the positive definite Hermitian matrix $\hat F \bar{ \hat F}^T$ satisfies that
\[
\rho_0\hat F \bar{ \hat F}^T\leq\frac{1}{4}I.
\]
Then we get the following inequality
\[
\begin{split}
&- K_1\eta^{p\bar q}u_{ip}u_{\bar i \bar q}+(C+K_1\rho_0)(\Theta+\Theta^2)\\
\leq &u_{l\bar q}u_{p\bar l}\eta^{p\bar m}\{C(1+\Theta)\delta_{q m} + (-\frac{1}{2}K_1+CK_1\rho_0 \Theta)\delta_{qm} \\
&+(C+\rho_0K_1)(1+\Theta)(\hat F_{q\bar n}u_{n\bar m}+\hat F_{n\bar m}u_{q\bar n}+u_{q\bar n}u_{n\bar m})\}\\
\leq &u_{l\bar q}u_{p\bar l}\eta^{p\bar q}\{(C_2-\frac{1}{2}K_1) +C(1+K_1)(1+\Theta)(\sqrt{\Theta}+\Theta)\}
\end{split}
\]
where we apply the following inequality for the complex Hessian of $u$
\[
-C\sqrt{\Theta}I\leq \{u_{i\bar j}\}\leq C\sqrt{\Theta}I
\]
in the second ``$\leq$''. Similar to the estimate of $Q_1$, we choose a positive constant $K_{1,2}$ such that
\[
C_2-\frac{1}{2}K_{1,2}=-4.
\]
So if $K_1\geq K_{1,2}$, then there holds
\[
\begin{split}
&- K_1\eta^{p\bar q}u_{ip}u_{\bar i \bar q}+(C+K_1\rho_0)(\Theta+\Theta^2)\\
\leq &u_{l\bar q}u_{p\bar l}\eta^{p\bar q}\{-4 +C(1+K_1)(1+\Theta)(\sqrt{\Theta}+\Theta)\}.
\end{split}
\]

Now we take $K_1=\max\{K_{1,1},K_{1,2}\}$. Therefore, we obtain that
\begin{equation}
\label{eqn-Q-11}
  Q_1\leq \Theta'(-2+C(\Theta+\sqrt{\Theta})+C\Theta')\leq \Theta'(-1+CQ)
\end{equation}
and
\begin{equation}
\label{eqn-Q-2}
\begin{split}
  Q_2&\leq u_{l\bar q}u_{p\bar l}\eta^{p\bar q}\{-4 +C(1+\Theta)(\sqrt{\Theta}+\Theta)\}\\
  &\leq u_{l\bar q}u_{p\bar l}\eta^{p\bar q}(-3 +CQ^2).
\end{split}
\end{equation}
%then we have
%\[
%\begin{split}
%&(C+K_1\rho_0)(\Theta+\Theta^2)- K_1\eta^{p\bar q}(u_{ip}u_{\bar i \bar q}+u_{i\bar q}u_{\bar ip})\\
%\leq &u_{l\bar q}u_{p\bar l}\eta^{p\bar m}((-100+C\Theta)\delta_{qm}+C(1+\Theta)u_{q\bar m}+C(1+\Theta)u_{q\bar n}u_{n\bar m})\\
%\leq &u_{l\bar q}u_{p\bar l}\eta^{p\bar q}(-50+C\Theta^2).
%\end{split}
%\]

\noindent \textbf{Estimate of $Q_4$:}

Before estimate $Q_3$, we first deal with $Q_4$ and choose a certain constant $K_2$. Since we have chosen $K_1$, we can treat it as a constant. We can rewrite $Q_4$ as follow
\[
\begin{split}
Q_4=&-K_2 \eta^{p\bar q}u_{p}u_{\bar q}+C|\nabla u|^2_\omega-C \eta^{p\bar q}R_{p\bar q j\bar i}u_i u_{\bar j}\\
 \leq &-K_2 \eta^{p\bar q}u_{p}u_{\bar q} +C|\nabla u|^2_\omega,
\end{split}
\]
since the curvature $R$ is bounded and $0< \eta^{-1}< I$. Furthermore, we also rewrite $|\nabla u|_\omega^2$ as follow in the normal coordinates
\[
|\nabla u|^2_\omega =u_iu_{\bar i}=\eta^{p\bar m}\eta_{q \bar m} u_pu_{\bar q}.
\]
Combining with the inequality $0\leq\eta\leq C(1+\Theta)I$ and choosing $K_2$ large enough, we obtain the following inequality for $Q_4$,
\begin{equation}
\label{eqn-Q-4}
\begin{split}
Q_4&\leq \eta^{p\bar m}u_{p}u_{\bar q}(-K_2 \delta_{qm} +C\eta_{q\bar m})\\
&\leq \eta^{p\bar q}u_pu_{\bar q}(-K_2+C+C\Theta)\\
&\leq \eta^{p\bar q}u_pu_{\bar q}(-1+CQ).
\end{split}
\end{equation}

%We can choose $K_2$ large enough but dependent only on $K_1$, $\hat F$ and $C_0$ such that there holds
%\begin{align*}
%&(C+C(\rho_0)K_1)|\nabla u|^2_\omega-K_2 \eta^{p\bar q}u_{p}u_{\bar q}-K_1 \eta^{p\bar q}R_{p\bar q j\bar i}u_i u_{\bar j}
%\\&=-K_2\eta^{p\bar q}u_pu_{\bar q}-K_1 \eta^{p\bar q} R_{p\bar qj\bar i} u_iu_{\bar j}+CK_1|\nabla u|_\omega^2\\
%&\leq -K_2 \eta^{p\bar q}u_pu_{\bar q} +CK_1\delta_{pq}u_pu_{\bar q}\\
%&\leq \eta^{p\bar m}(-K_2\delta_{mq} +CK_1 \eta_{m\bar q})u_pu_{\bar q}\\
%&\leq \eta^{p\bar m}(-K_2\delta_{mq}+CK_1(\delta_{qm}+(\hat F_{q\bar n}+u_{q\bar n})(\hat F_{n\bar m}+u_{n\bar m})))u_pu_{\bar q}\\
%&\leq -\eta^{p\bar m}(-(K_2-CK_1)\delta_{mq}+C K_1u_{q\bar m}+CK_1 u_{q\bar n}u_{n\bar m}) u_pu_{\bar q}\\
%&\leq \eta^{p\bar q}u_pu_{\bar q}(-100+C\Theta^2).
%\end{align*}

\noindent\textbf{ Estimate of $Q_3$:}

At last, we estimate $Q_3$. Since $K_2$ is also a given constant and $I\leq\eta\leq C(1+\Theta)I$, we obtain the following inequality
\begin{equation}
\label{eqn-Q-3}
\begin{split}
Q_3&=CK_2(u-u(p, 0))(\Theta+\Theta^2)\\
&\leq CQ^{\frac{1}{2}}(1+\Theta)u_{i\bar q}u_{p\bar i}\delta_{pq}=CQ^{\frac{1}{2}}(1+\Theta)u_{i\bar q}u_{p\bar i}\eta^{p\bar m}\eta_{q\bar m}\\
&\leq C\eta^{p\bar q}u_{i\bar q}u_{p\bar i}Q^{\frac{1}{2}}(1+\Theta^2)\\
&\leq \eta^{p\bar q}u_{i\bar q}u_{p\bar i}(1+CQ^3).
\end{split}
\end{equation}
Inserting \eqref{eqn-Q-11}, \eqref{eqn-Q-2}, \eqref{eqn-Q-3} and \eqref{eqn-Q-4} into \eqref{eqn-QQ}, we get that
\begin{equation}
\label{eqn-Q-ineq}
\begin{split}
&(\frac{\partial}{\partial t}-\Delta_\eta)Q\\
\leq &\repl{+}\eta^{p\bar q}(u_{p\bar s i}u_{s\bar q \bar i}+u_{p\bar s\bar i}u_{\bar q s i})(-\frac{1}{2}+C\Theta +C\Theta')\\
&+\eta^{p\bar q}u_{l\bar q}u_{p\bar l}(-2+CQ^2+CQ^3) +\Theta'(-1+CQ)+\eta^{p\bar q}u_pu_{\bar q}(-1+CQ)\\
\leq &\repl{+}\eta^{p\bar q}(u_{p\bar s i}u_{s\bar q \bar i}+u_{p\bar s\bar i}u_{\bar q s i})(-\frac{1}{2}+C_1Q)\\
&+\eta^{p\bar q}u_{l\bar q}u_{p\bar l}(-1+C_2Q^3) +(\Theta'+\eta^{p\bar q}u_pu_{\bar q})(-1+C_3Q)\\
\end{split}
\end{equation}
where $C_1, C_2, C_3$ is constant independent of $u$. We choose $ \delta'_0>0$ a small positive constant such that if $0\leq Q\leq \delta'_0$, there holds
\[
\begin{split}
-\frac{1}{2}+C_1Q&< 0;\\
-1+C_2Q^3 &< 0;\\
-1+C_3Q&< 0.
\end{split}
\]
Then we know that $Q\leq \delta'_0$ is preserved along the line bundle MCF by the maximal principle. And hence, there exists a positive constant $\delta_0=\frac{\delta'_0}{C_\omega}$ such that if $|D^2u_0|_{L^\infty}\leq \delta_0$, then $||D^2 u||_{L^\infty}\leq C_\omega\delta_0$ along the line bundle MCF according to Lemma \ref{lemma-smalle}.
\end{proof}

\section{Long time existence of the line bundle MCF}
\label{section-5}

In this section we will prove the first part of Theorem \ref{thm-stability}, i.e. long-time existence of \added{the} line bundle \replaced{MCF}{mean curvature flow}. In order to prove this, we need to get \added{uniform} estimates for higher order derivatives of $u$. According to standard Schauder method, it is enough to obtain the uniform bound of $|\nabla \nabla \bar \nabla u(,t)|^2 $.

\begin{zj}
  Unlike \cite{JY}, our assumptions can not guarantee the positivity of $F_{u_t}$ along \added{the} line bundle \replaced{MCF}{mean curvature flow}. So the operator
  \[
  \theta(F_u)=\sum \arctan \lambda_i(F_u)
  \]
  under our consideration need not be concave and we can not apply Evans-Krylov theory to get $C^{2,\alpha}$ estimate for $u$ directly. To overcome this difficulty, we will use the parabolic Calabi type estimate for \added{the} line bundle \replaced{MCF}{mean curvature flow} which is new to our best knowledge. However, our estimate rely on the smallness of $D^2 u$ along \added{the} line bundle \replaced{MCF}{mean curvature flow}. The usual (parabolic)Calabi type estimate under assumption on the bound of $||u||_{C^2}$ is still an open problem in general case.
\end{zj}

For convenience, we \replaced{denote two notations of the third and forth order derivativs}{set}
\[
\Gamma=|\nabla \nabla \bar \nabla u(,t)|_\omega^2
 \]
 and
 \[
 \Xi=|\nabla \nabla \bar \nabla \nabla u|_\omega^2+|\nabla \bar \nabla \bar \nabla \nabla u|_\omega^2.
 \]
 Before proving Theorem \ref{thm-stability}, we first prove the following lemma.

\begin{yl}
\label{lemma-evo-3}
If $||u||_{C^2}$ is uniformly bounded along \added{the} line bundle MCF, then $\Gamma=|\nabla \bar  \nabla \nabla u|^2_\omega$ satisfies the following inequality
\[
(\frac{\partial}{\partial t}-\Delta_\eta) \Gamma
\leq C+C\Gamma^2-C\Xi
\] where $C$ is a constant dependent only on $||u||_{C^2}$, $\hat F$, $\omega$ and $n$.
\end{yl}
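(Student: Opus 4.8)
\section*{Proof proposal}

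The plan is to compute $(\frac{\partial}{\partial t}-\Delta_\eta)\Gamma$ directly in a covariant way and to isolate a single ``good'' fourth--order term that dominates a multiple of $\Xi$, while showing that every remaining term is an error that is bounded either by a constant, by $C\Gamma$, by $C\Gamma^2$, or by $\varepsilon\Xi$ (to be absorbed into the good term). As in Lemma~\ref{lemma-evo-estimate-2}, the structural input is that once $\|u\|_{C^2}$ is bounded, $F=\hat F+\partial\bar\partial u$ is bounded, so $\eta=I+F^2$ satisfies $c_0 I\le \eta\le C_0 I$ in any normal frame; hence $\eta^{-1}$ is two-sided bounded. Moreover $R$, $\nabla R$, $\nabla^2 R$, $\nabla^3 R$ and all covariant derivatives of the fixed form $\hat F$ are bounded by the geometry and by $n$, and $\partial_t$ commutes with covariant differentiation in the fixed metric $\omega$.

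For the time derivative, from $\partial_t u=\theta(F_u)-\hat\theta$ one gets $\partial_t u_{ij\bar k}=\theta_{ij\bar k}$, and by \eqref{deltatheta}, $\theta_i=\eta^{p\bar q}F_{p\bar q,i}$. Distributing the two further covariant derivatives over the product $\eta^{p\bar q}F_{p\bar q,i}$, the leading term is $\eta^{p\bar q}F_{p\bar q,ij\bar k}=\eta^{p\bar q}u_{p\bar q ij\bar k}+\eta^{p\bar q}\hat F_{p\bar q,ij\bar k}$, the second summand being bounded. Every other term carries at least one derivative on $\eta^{p\bar q}$ (equivalently, after expanding, on $\eta=I+F^2$ and on $F=\hat F+\partial\bar\partial u$), hence is a contraction of bounded tensors with at most two factors of $\nabla^{\le 3}u$ and one factor of $\nabla^{\le 4}u$; pairing with the remaining $\overline{u_{ij\bar k}}$ and using Lemma~\ref{lemma-ag-trace} together with Young's inequality in the form $|\nabla^{\le 4}u|\,|\nabla^{3}u|^2\le \varepsilon\,\Xi+C_\varepsilon\Gamma^2$ bounds it by $C+C\Gamma^2+\varepsilon\Xi$. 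Finally, commuting the leading term via the Ricci identity, $\eta^{p\bar q}u_{p\bar q ij\bar k}=\eta^{p\bar q}u_{ij\bar k p\bar q}+(\text{commutators})$, where the commutators are contractions of $\nabla^{\le 3}R$ with $\nabla^{\le 3}u$, hence $\le C(1+\sqrt\Gamma)$ and contribute $\le C+C\Gamma$ after pairing with $\overline{u_{ij\bar k}}$.

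For the space derivative, since $(X,\omega)$ is K\"ahler the metric is parallel, so $\Delta_\eta\Gamma=\eta^{p\bar q}\nabla_p\nabla_{\bar q}\Gamma$ expands into a ``good'' term, schematically $\eta^{p\bar q}\bigl(u_{ij\bar kp}\,\overline{u_{ij\bar kq}}+u_{ij\bar k\bar q}\,\overline{u_{ij\bar k\bar p}}\bigr)$ (with the $g$-contractions of $\Gamma$), plus the cross term $2\operatorname{\mathop{Re}}\bigl(\eta^{p\bar q}u_{ij\bar kp\bar q}\,\overline{u_{ij\bar k}}\bigr)$. The cross term is exactly the one produced by the time derivative after the Ricci commutation above, so the two cancel in $(\frac{\partial}{\partial t}-\Delta_\eta)\Gamma$; and since $\eta^{-1}\ge c_0 I$, the good term is $\ge c_0\Xi$ (its two pieces are $\eta$-traces of $|\nabla\nabla\bar\nabla\nabla u|^2$ and $|\nabla\bar\nabla\bar\nabla\nabla u|^2$). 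Collecting everything, $(\frac{\partial}{\partial t}-\Delta_\eta)\Gamma\le -c_0\Xi+C+C\Gamma^2+\varepsilon\Xi$, and choosing $\varepsilon=\tfrac{c_0}{2}$ yields $(\frac{\partial}{\partial t}-\Delta_\eta)\Gamma\le C+C\Gamma^2-\tfrac{c_0}{2}\Xi$, which is the asserted inequality after relabelling constants.

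The main obstacle is purely the bookkeeping in the two middle paragraphs: one must verify that among the many terms generated by expanding $\theta_{ij\bar k}$ and $\Delta_\eta\Gamma$, no term survives that is linear in $\Xi$ with an uncontrolled coefficient --- each genuine fourth--order factor must either live inside the good term or be paired (via Lemma~\ref{lemma-ag-trace}) against a bounded factor coming from the $C^2$ bound, so that it only contributes $\varepsilon\Xi+C\Gamma^2$. As elsewhere in the paper, the special case $\hat F=c\omega$, where $\hat F_{p\bar q,i}=\hat F_{p\bar q,i\bar j}=\cdots=0$, is cleaner and may be treated separately; note also that, unlike in Lemma~\ref{lemma-evo-estimate-2} and Theorem~\ref{thm-smallhessian}, here one only needs $\|u\|_{C^2}$ bounded (not small), and the dHYM identities \eqref{eqn-1st-derivative}--\eqref{eqn-2nd-derivative} are not needed, since the derivatives of $\hat F$ enter only as bounded inhomogeneous terms.
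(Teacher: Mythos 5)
Your proposal follows essentially the same route as the paper's proof: differentiate $\theta_i=\eta^{p\bar q}F_{p\bar q,i}$ twice, cancel the leading fourth-order term against the cross term from $\Delta_\eta\Gamma$ via the Ricci identity (leaving bounded curvature commutators), and absorb every remaining term—those carrying derivatives of $\eta^{-1}$ and of $\hat F$—by Cauchy--Young into $C+C\Gamma^2+\varepsilon\Xi$, while the good term $\eta^{p\bar q}\left(u_{i\bar jkp}u_{\bar i j\bar k\bar q}+u_{i\bar jk\bar q}u_{\bar i j\bar kp}\right)$ supplies $-c\,\Xi$ because the $C^2$ bound makes $\eta$ uniformly equivalent to $\omega$. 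The only inaccuracy is cosmetic bookkeeping (some error terms, e.g.\ those with two derivatives landing on $\eta^{-1}$, carry three factors of $\nabla^3u$ rather than two plus a $\nabla^{\le 4}u$), but these are still $\le C\Gamma^2$, so your argument is correct and matches the paper's.
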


\begin{proof}
We first compute the evolution equation of $\Gamma$ along \added{the} line bundle MCF. The $\frac{\partial}{\partial t}$-part of $\Gamma$ is given by
\begin{align*}
&\frac{\partial }{\partial t}\Gamma\\
=&g^{i\bar a}g^{b\bar j}g^{k\bar c}(\frac{\partial u}{\partial t})_{i\bar j k}u_{\bar a b \bar c}+g^{i\bar a}g^{b\bar j}g^{k\bar c}(\frac{\partial u}{\partial t})_{\bar a b\bar c }u_{ i\bar j  k} \\
=&g^{i\bar a}g^{b\bar j}g^{k\bar c}\theta_{i\bar j k}u_{\bar a b \bar c}+g^{i\bar a}g^{b\bar j}g^{k\bar c}\theta_{\bar a b\bar c }u_{ i\bar j  k} \\
=&g^{i\bar a}g^{b\bar j}g^{k\bar c}(\eta^{p\bar q} F_{p\bar q, i})_{\bar j k} u_{\bar a b \bar c}+g^{i\bar a}g^{b\bar j}g^{k\bar c}(\eta^{p\bar q} F_{p\bar q, \bar a})_{b \bar c} u_{ i \bar j k }\\
=&2\mathop{\operatorname{Re}}\{(\eta^{p\bar q}_{\repl{p\bar q},\bar j k}F_{p\bar q,i}+\eta^{p\bar q}_{\repl{p\bar q},\bar j}F_{p\bar q,ik}+\eta^{p\bar q}_{\repl{p\bar q},k} F_{p\bar q,i\bar j}+\eta^{p\bar q}F_{p\bar q,i\bar jk})u_{\bar i j\bar k}\}\\
=&2\mathop{\operatorname{Re}}(\eta^{p\bar q}(\hat F_{p\bar q,i\bar jk}+u_{p\bar q,i\bar jk})u_{\bar i j\bar k})\\
-& 2\mathop{\operatorname{Re}}(\eta^{p\bar n}\eta^{m\bar q}(F_{m\bar l,\bar j}F_{l\bar n}+F_{m\bar l}F_{l\bar n,\bar j})F_{p\bar q,ik}u_{\bar ij\bar k})\\
-&2\mathop{\operatorname{Re}}(\eta^{p\bar n}\eta^{m\bar q}(F_{m\bar l,k}F_{l\bar n}+F_{m\bar l}F_{l\bar n,k})F_{p\bar q,i\bar j}u_{\bar ij\bar k})\\
+&2\mathop{\operatorname{Re}}(\eta^{p\bar b}\eta^{a\bar n}\eta^{m\bar q}(F_{a\bar c,k}F_{c\bar b}+F_{a\bar c}F_{c\bar b,k})(F_{m\bar l,\bar j}F_{l\bar n}+F_{m\bar l}F_{l\bar n,\bar j})F_{p\bar q,i}u_{\bar ij\bar k})\\
+&2\mathop{\operatorname{Re}}(\eta^{p\bar n}\eta^{m\bar b}\eta^{a\bar q}(F_{a\bar c,k}F_{c\bar b}+F_{a\bar c}F_{c\bar b,k})(F_{m\bar l,\bar j}F_{l\bar n}+F_{m\bar l}F_{l\bar n,\bar j})F_{p\bar q,i}u_{\bar ij\bar k})\\
-&2\mathop{\operatorname{Re}}(\eta^{p\bar n}\eta^{m\bar q}(F_{m\bar l,\bar jk}F_{l\bar n}+F_{m\bar l,\bar j}F_{l\bar n,k}+F_{m\bar l,k}F_{l\bar n, \bar j}+F_{m\bar l}F_{l\bar n,\bar jk})F_{p\bar q,i}u_{\bar ij\bar k}).
%=&2\mathop{\operatorname{Re}}(\eta^{p\bar q}\hat F_{p\bar q,i\bar jk}u_{\bar i j\bar k})+2\mathop{\operatorname{Re}}(\eta^{p\bar q}u_{p\bar q,i\bar jk}u_{\bar i j\bar k})\\
%+& 2\mathop{\operatorname{Re}}(\eta^{p\bar n}\eta^{m\bar q}(F_{m\bar l,\bar j}F_{l\bar n}+F_{m\bar l}F_{l\bar n,\bar j})F_{p\bar q,ik}u_{\bar ij\bar k})\\
%+&2\mathop{\operatorname{Re}}(\eta^{p\bar n}\eta^{m\bar q}(F_{m\bar l,k}F_{l\bar n}+F_{m\bar l}F_{l\bar n,k})F_{p\bar q,i\bar j}u_{\bar ij\bar k})\\
%+&2\mathop{\operatorname{Re}}(\eta^{p\bar b}\eta^{a\bar n}\eta^{m\bar q}(F_{a\bar c,k}F_{c\bar b}+F_{a\bar c}F_{c\bar b,k})(F_{m\bar l,\bar j}F_{l\bar n}+F_{m\bar l}F_{l\bar n,\bar j})F_{p\bar q,i}u_{\bar ij\bar k})\\
%+&2\mathop{\operatorname{Re}}(\eta^{p\bar n}\eta^{m\bar b}\eta^{a\bar q}(F_{a\bar c,k}F_{c\bar b}+F_{a\bar c}F_{c\bar b,k})(F_{m\bar l,\bar j}F_{l\bar n}+F_{m\bar l}F_{l\bar n,\bar j})F_{p\bar q,i}u_{\bar ij\bar k})\\
%+&2\mathop{\operatorname{Re}}(\eta^{p\bar n}\eta^{m\bar q}(F_{m\bar l,\bar jk}F_{l\bar n}+F_{m\bar l,\bar j}F_{l\bar n,k}+F_{m\bar l,k}F_{l\bar n, \bar j}+F_{m\bar l}F_{l\bar n,\bar jk})F_{p\bar q,i}u_{\bar ij\bar k})\\
\end{align*}

The $\Delta_\eta$-part of $\Gamma$ is given by
\[
\begin{split}
  &\Delta_\eta \Gamma\\
  =&\eta^{p\bar q}(u_{i\bar jk}u_{\bar i j\bar k})_{p\bar q}\\
  =&\eta^{p\bar q}(u_{i\bar jkp}u_{\bar i j\bar k\bar q}+u_{i\bar jkp\bar q}u_{\bar i j\bar k}+u_{i\bar jk\bar q}u_{\bar i j\bar kp}+u_{i\bar jk}u_{\bar i j\bar kp\bar q})\\
  =&2\mathop{\operatorname{Re}}(\eta^{p\bar q}u_{i\bar jkp\bar q}u_{\bar ij\bar k}) +\eta^{p\bar q}(u_{i\bar jkp}u_{\bar i j\bar k\bar q}+u_{i\bar jk\bar q}u_{\bar i j\bar kp}).
\end{split}
\]
Thus, the evolution formula of $\Gamma$ is
\[
\begin{split}
    &(\frac{\partial}{\partial t}-\Delta_\eta) \Gamma\\
  =&2\mathop{\operatorname{Re}}(\eta^{p\bar q}\hat F_{p\bar q,i\bar jk}u_{\bar i j\bar k})+2\mathop{\operatorname{Re}}((u_{p\bar q,i\bar jk}-u_{i\bar jkp\bar q})\eta^{p\bar q}u_{\bar ij\bar k})\\
-& 2\mathop{\operatorname{Re}}(\eta^{p\bar n}\eta^{m\bar q}(F_{m\bar l,\bar j}F_{l\bar n}+F_{m\bar l}F_{l\bar n,\bar j})F_{p\bar q,ik}u_{\bar ij\bar k})\\
-&2\mathop{\operatorname{Re}}(\eta^{p\bar n}\eta^{m\bar q}(F_{m\bar l,k}F_{l\bar n}+F_{m\bar l}F_{l\bar n,k})F_{p\bar q,i\bar j}u_{\bar ij\bar k})\\
+&2\mathop{\operatorname{Re}}(\eta^{p\bar b}\eta^{a\bar n}\eta^{m\bar q}(F_{a\bar c,k}F_{c\bar b}+F_{a\bar c}F_{c\bar b,k})(F_{m\bar l,\bar j}F_{l\bar n}+F_{m\bar l}F_{l\bar n,\bar j})F_{p\bar q,i}u_{\bar ij\bar k})\\
+&2\mathop{\operatorname{Re}}(\eta^{p\bar n}\eta^{m\bar b}\eta^{a\bar q}(F_{a\bar c,k}F_{c\bar b}+F_{a\bar c}F_{c\bar b,k})(F_{m\bar l,\bar j}F_{l\bar n}+F_{m\bar l}F_{l\bar n,\bar j})F_{p\bar q,i}u_{\bar ij\bar k})\\
-&2\mathop{\operatorname{Re}}(\eta^{p\bar n}\eta^{m\bar q}(F_{m\bar l,\bar jk}F_{l\bar n}+F_{m\bar l,\bar j}F_{l\bar n,k}+F_{m\bar l,k}F_{l\bar n, \bar j}+F_{m\bar l}F_{l\bar n,\bar jk})F_{p\bar q,i}u_{\bar ij\bar k})\\
-&\eta^{p\bar q}(u_{i\bar jkp}u_{\bar i j\bar k\bar q}+u_{i\bar jk\bar q}u_{\bar i j\bar kp})\\
\end{split}
\]
By Ricci identity, we have the following formula while changing order of derivatives
\[
\begin{split}
  &u_{p\bar qi\bar jk}-u_{i\bar jkp\bar q}\\
  =&u_{a\bar jk} R_{i\bar a \bar qp}+u_{i\bar a k}R_{a\bar j\bar q p} +u_{i\bar ja}R_{k\bar a\bar q p}+u_{a\bar j p}R_{i\bar a\bar q k}+u_{a\bar j}R_{i\bar a\bar q k,p}\\
  +&u_{a\bar qk}R_{i\bar ap \bar j}++u_{a\bar q}R_{i\bar ap \bar j,k} -u_{i\bar ap} R_{a\bar j\bar qk}-u_{i\bar a k}R_{a\bar q p \bar j}.\\
%  =&\nabla \bar \nabla \nabla u *R+\nabla \bar \nabla u*\nabla R
\end{split}
\]
Therefore, we obtain that
\begin{align*}
  &(\frac{\partial}{\partial t}-\Delta_\eta) \Gamma\\
  =&2\mathop{\operatorname{Re}}(\eta^{p\bar q}\hat F_{p\bar q,i\bar jk}u_{\bar i j\bar k})+2\mathop{\operatorname{Re}}(\eta^{p\bar q}u_{\bar ij\bar k}(u_{a\bar jk} R_{i\bar a \bar qp}+u_{i\bar a k}R_{a\bar j\bar q p} +u_{i\bar ja}R_{k\bar a\bar q p}\\
  +&u_{a\bar j p}R_{i\bar a\bar q k}+u_{a\bar j}R_{i\bar a\bar q k,p}+u_{a\bar qk}R_{i\bar ap \bar j}++u_{a\bar q}R_{i\bar ap \bar j,k} -u_{i\bar ap} R_{a\bar j\bar qk}-u_{i\bar a k}R_{a\bar q p \bar j}))\\
-& 2\mathop{\operatorname{Re}}(\eta^{p\bar n}\eta^{m\bar q}(F_{m\bar l,\bar j}F_{l\bar n}+F_{m\bar l}F_{l\bar n,\bar j})F_{p\bar q,ik}u_{\bar ij\bar k})\\
-&2\mathop{\operatorname{Re}}(\eta^{p\bar n}\eta^{m\bar q}(F_{m\bar l,k}F_{l\bar n}+F_{m\bar l}F_{l\bar n,k})F_{p\bar q,i\bar j}u_{\bar ij\bar k})\\
+&2\mathop{\operatorname{Re}}(\eta^{p\bar b}\eta^{a\bar n}\eta^{m\bar q}(F_{a\bar c,k}F_{c\bar b}+F_{a\bar c}F_{c\bar b,k})(F_{m\bar l,\bar j}F_{l\bar n}+F_{m\bar l}F_{l\bar n,\bar j})F_{p\bar q,i}u_{\bar ij\bar k})\\
+&2\mathop{\operatorname{Re}}(\eta^{p\bar n}\eta^{m\bar b}\eta^{a\bar q}(F_{a\bar c,k}F_{c\bar b}+F_{a\bar c}F_{c\bar b,k})(F_{m\bar l,\bar j}F_{l\bar n}+F_{m\bar l}F_{l\bar n,\bar j})F_{p\bar q,i}u_{\bar ij\bar k})\\
-&2\mathop{\operatorname{Re}}(\eta^{p\bar n}\eta^{m\bar q}(F_{m\bar l,\bar jk}F_{l\bar n}+F_{m\bar l,\bar j}F_{l\bar n,k}+F_{m\bar l,k}F_{l\bar n, \bar j}+F_{m\bar l}F_{l\bar n,\bar jk})F_{p\bar q,i}u_{\bar ij\bar k})\\
-&\eta^{p\bar q}(u_{i\bar jkp}u_{\bar i j\bar k\bar q}+u_{i\bar jk\bar q}u_{\bar i j\bar kp})\\
=&T_1+T_2+T_3+T_4+T_5+T_6+T_7-\eta^{p\bar q}(u_{i\bar jkp}u_{\bar i j\bar k\bar q}+u_{i\bar jk\bar q}u_{\bar i j\bar kp})
\end{align*}
where we set
\[
\begin{split}
T_1=&\repl{-}2\mathop{\operatorname{Re}}(\eta^{p\bar q}\hat F_{p\bar q,i\bar jk}u_{\bar i j\bar k})\\
T_2=&\repl{-}2\mathop{\operatorname{Re}}(\eta^{p\bar q}u_{\bar ij\bar k}(u_{a\bar jk} R_{i\bar a \bar qp}+u_{i\bar a k}R_{a\bar j\bar q p} +u_{i\bar ja}R_{k\bar a\bar q p}+u_{a\bar j p}R_{i\bar a\bar q k}\\
&+u_{a\bar j}R_{i\bar a\bar q k,p}+u_{a\bar qk}R_{i\bar ap \bar j}++u_{a\bar q}R_{i\bar ap \bar j,k} -u_{i\bar ap} R_{a\bar j\bar qk}-u_{i\bar a k}R_{a\bar q p \bar j}))\\
T_3=&- 2\mathop{\operatorname{Re}}(\eta^{p\bar n}\eta^{m\bar q}(F_{m\bar l,\bar j}F_{l\bar n}+F_{m\bar l}F_{l\bar n,\bar j})F_{p\bar q,ik}u_{\bar ij\bar k})\\
T_4=&-2\mathop{\operatorname{Re}}(\eta^{p\bar n}\eta^{m\bar q}(F_{m\bar l,k}F_{l\bar n}+F_{m\bar l}F_{l\bar n,k})F_{p\bar q,i\bar j}u_{\bar ij\bar k})\\
T_5=&\repl{-}2\mathop{\operatorname{Re}}(\eta^{p\bar b}\eta^{a\bar n}\eta^{m\bar q}(F_{a\bar c,k}F_{c\bar b}+F_{a\bar c}F_{c\bar b,k})(F_{m\bar l,\bar j}F_{l\bar n}+F_{m\bar l}F_{l\bar n,\bar j})F_{p\bar q,i}u_{\bar ij\bar k})\\
T_6=&\repl{-}2\mathop{\operatorname{Re}}(\eta^{p\bar n}\eta^{m\bar b}\eta^{a\bar q}(F_{a\bar c,k}F_{c\bar b}+F_{a\bar c}F_{c\bar b,k})(F_{m\bar l,\bar j}F_{l\bar n}+F_{m\bar l}F_{l\bar n,\bar j})F_{p\bar q,i}u_{\bar ij\bar k})\\
T_7=&-2\mathop{\operatorname{Re}}(\eta^{p\bar n}\eta^{m\bar q}(F_{m\bar l,\bar jk}F_{l\bar n}+F_{m\bar l,\bar j}F_{l\bar n,k}+F_{m\bar l,k}F_{l\bar n, \bar j}+F_{m\bar l}F_{l\bar n,\bar jk})F_{p\bar q,i}u_{\bar ij\bar k}).
\end{split}
\]
Since $||u||_{C^2}$ is bounded and $F_{i\bar j}=\hat F_{i\bar j}+u_{i\bar j}$, we can get the following inequalities for $T_i(i=1,\cdots,7)$ according to Cauchy inequality
\[
\begin{split}
T_1+T_2&\leq C+C\Gamma^2,\\
T_3+T_4+T_7&\leq C+C\Gamma^2 +\frac{1}{100}\Xi,\\
T_5+T_6&\leq C+C\Gamma^2,\\
%A_7&\leq C+\Gamma^2+\frac{1}{100}\Xi
\end{split}
\]
Hence, we get that
\[
(\frac{\partial}{\partial t}-\Delta_\eta) \Gamma \leq  C+C\Gamma^2-C\Xi.\qedhere
\]
\end{proof}

Now we begin to prove the first part of Theorem \ref{thm-stability}. The exponential convergence of will be presented in Section \ref{section-expon}.

\begin{proof}
We assume the maximal existence interval of line bundle MCF is $[0,T)$. Since $\Theta=|\nabla \bar \nabla u|_\omega^2$ is small along the line bundle \replaced{MCF}{mean curvature flow}, we have the following estimate for $\Theta$ by choosing $\delta_0$ small enough
\[
(\frac{\partial}{\partial t}-\Delta_\eta)\Theta \leq -C_1\Gamma+C,
\]
for some positive constants $C_1$ and $C$ according to Lemma \ref{lemma-evo-estimate-2}.

Suppose $A$ is a constant to be determined later. Then we have the following inequality along \added{the} line bundle MCF
\[
\begin{split}
&(\frac{\partial}{\partial t}-\Delta_\eta)(e^{A\Theta}\Gamma)\\
=&e^{A\Theta}(\frac{\partial}{\partial t}-\Delta_\eta)\Gamma +Ae^{A\Theta}\Gamma (\frac{\partial}{\partial t}-\Delta_\eta)\Theta\\
&-2\mathop{\operatorname{Re}}(Ae^{A\Theta}\Theta_p\Gamma_{ \bar q}\eta^{p\bar q})-A^2 e^{A\Theta}\eta^{p\bar q}\Theta_p\Theta_{\bar q}\Gamma\\
\leq & e^{A\Theta}(C+C\Gamma^2 -C\Xi) +A e^{A\Theta}\Gamma(-C_1 \Gamma+C)-A^2e^{A\Theta} \Gamma\eta^{p\bar q}\Theta_p\Theta_{\bar q}\\
&-2\mathop{\operatorname{Re}}(Ae^{A\Theta}\Theta_p\Gamma_{ \bar q}\eta^{p\bar q}).
\end{split}
\]

%================
%{\color{blue}
%\[
%\begin{split}
%&(\frac{\partial}{\partial t}-\Delta_\eta)(e^{A\Theta}\Gamma)\\
%=&e^{A\Theta}(\frac{\partial}{\partial t}-\Delta_\eta)\Gamma +Ae^{A\Theta}\Gamma (\frac{\partial}{\partial t}-\Delta_\eta)\Theta\\
%&-2\mathop{\operatorname{Re}}(Ae^{A\Theta}\Theta_p\Gamma_{ \bar q}\eta^{p\bar q})-A^2 e^{A\Theta}\eta^{p\bar q}\Theta_p\Theta_{\bar q}\Gamma\\
%\leq & e^{A\Theta}(C+C\Gamma^2 -C\Xi) +A e^{A\Theta}\Gamma(-C_1 \Gamma+C)-A^2e^{A\Theta} \Gamma\eta^{p\bar q}\Theta_p\Theta_{\bar q}\\
%&-2\mathop{\operatorname{Re}}(Ae^{A\Theta}\Theta_p\Gamma_{ \bar q}\eta^{p\bar q}).
%\end{split}
%\]
%}
%================

According to the equation
\[
\nabla (e^{A\Theta}\Gamma) =A e^{A\Theta}\nabla \Theta \Gamma+e^{A\Theta} \nabla\Gamma,
\]
we obtain that
\[
-2\mathop{\operatorname{Re}} (Ae^{A\Theta}\Theta_p\Gamma_{\bar q}\eta^{p\bar q})=-2\operatorname{\mathop{Re}}(A\eta^{p\bar q}\Theta_p(e^{A\Theta}\Gamma)_{\bar q})+2\mathop{\operatorname{Re}}( A^2e^{A\Theta}\Theta_p\Theta_{\bar q}\eta^{p\bar q}\Gamma).
\]
Inserting it into the evolution inequality above, 
%=============
\[
\begin{split}
&(\frac{\partial}{\partial t}-\Delta_\eta)(e^{A\Theta}\Gamma)\\
\leq & e^{A\Theta}(C+C\Gamma^2 -C\Xi) +A e^{A\Theta}\Gamma(-C_1 \Gamma+C)-A^2e^{A\Theta} \Gamma\eta^{p\bar q}\Theta_p\Theta_{\bar q}\\
&-2\mathop{\operatorname{Re}}(A\eta^{p\bar q}\Theta_p(e^{A\Theta}\Gamma)_{\bar q})+2A^2e^{A\Theta}\Theta_p\Theta_{\bar q}\eta^{p\bar q}\Gamma\\
=& e^{A\Theta}(C+C\Gamma^2 -C\Xi) +A e^{A\Theta}\Gamma(-C_1 \Gamma+C)+A^2e^{A\Theta} \Gamma\eta^{p\bar q}\Theta_p\Theta_{\bar q}\\
&-2\mathop{\operatorname{Re}}(A\eta^{p\bar q}\Theta_p(e^{A\Theta}\Gamma)_{\bar q}).
\end{split}
\]
%=============

Since $|\nabla\bar \nabla u|_\omega^2 \leq \delta_0$ and $\eta^{-1}\leq I$, we get that
\[
\Theta_p\Theta_{\bar q}\eta^{ p\bar q}=(u_{i\bar jp}u_{j\bar i}+u_{i\bar j}u_{j\bar ip} )(u_{k\bar l\bar q}u_{l\bar k}+ u_{k\bar l}u_{l\bar k\bar q})\eta^{p\bar q}\leq \delta_0 \Gamma,
\]
and
\[
2\mathop{\operatorname{Re}}( A^2e^{A\Theta}\Theta_p\Theta_{\bar q}\eta^{p\bar q}\Gamma)\leq 2\delta_0 A^2 \Gamma^2e^{A\Theta}
\]
According to the inequalities above, we get that
\[
\begin{split}
(\frac{\partial}{\partial t}-\Delta_\eta)(e^{A\Theta}\Gamma)\leq& -2\operatorname{\mathop{Re}}(A\eta^{p\bar q}\Theta_p(e^{A\Theta}\Gamma)_{\bar q})\\
&+e^{A\Theta}\{(C_2-AC_1+A^2 \delta_0)\Gamma^2 +AC\times\Gamma+C\}.
\end{split}
\]
%{\color{blue}
%\[
%\begin{split}
%(\frac{\partial}{\partial t}-\Delta_\eta)(e^{A\Theta}\Gamma)\leq& -2\operatorname{\mathop{Re}}(A\eta^{p\bar q}\Theta_p(e^{A\Theta}\Gamma)_{\bar q})\\
%&+e^{A\Theta}\{(C_2-AC_1+A^2 \delta_0)\Gamma^2 +AC\times\Gamma+C\}.
%\end{split}
%\]
%}
We choose $\delta_0$ small enough\footnote{The constant $\delta_0$ chosen here may be smaller than that in Lemma \ref{thm-smallhessian}, so the smallness of Hessian is still preserved along line bundle \replaced{MCF}{mean curvature flow}.} such that
\[
C_1^2-4C_2\delta_0>0.
\]
%{\color{blue}
%\[
%C_1^2-4C_2\delta_0>0.
%\]}
Then we can choose $A$ such that
\[
-C_3=C_2-AC_1+A^2\delta_0< 0.
\]
%{\color{blue}
%\[
%-C_3=C_2-AC_1+A^2\delta_0<0.
%\]
%}
Hence $e^{A\Theta}\Gamma$ is bounded along the line bundle \replaced{MCF}{mean curvature flow} by maximal principle. As a consequence, $\Gamma$ is bounded since $\Theta$ is bounded, i.e. $\nabla \bar \nabla \nabla u(,t)$ is uniformly bounded for all $t\in[0,T)$.

Then we get the uniform estimate of higher order derivatives as follow. The uniform bound of $\nabla \bar \nabla \nabla u(,t)$ implies that the $C^{\alpha}$-norm of $\nabla\bar \nabla u(,t)$ is uniformly bounded for any $\alpha\in(0,1)$ and $t\in[0,T)$. Hence the $C^{\alpha}$-norm of $\eta$ is uniformly bounded. The standard parabolic Schauder estimate gives us the uniform higher order estimate. Then we can extend the line bundle \replaced{MCF}{mean curvature flow} across time $T$. As a consequence, we get the long-time existence and convergence of the line bundle \replaced{MCF}{mean curvature flow} in the sense of subsequence.
\end{proof}

%\begin{zj}
%  In fact, we can just assume that the norm of real Hessian $D^2u_t$ at $t=0$ is very small. In details, since $X$ is compact, $u_0$ must obtain its maximum at some point $x_0$. Hence $\nabla u_0(x_0)=0$ and for all $x\in X$,
%  \[
%  ||\nabla u_0(x)-\nabla u_0(x_0)||\leq ||D^2u_0||_{L^{\infty}} R(\omega)
%  \]
%  where $R(\omega)$ is the diameter of $(X,\omega)$.
%\end{zj}

\section{Exponential Convergence}
\label{section-expon}
In this section, we prove the exponential convergence as stated in Theorem \ref{thm-stability}. By the line bundle \replaced{MCF}{mean curvature flow} and  $(\ref{deltatheta})$, we get that the following equation for function $\theta$,
\begin{equation*}
\frac{\partial}{\partial t}\theta =\mbox{Tr}((I+K^2)^{-1}\frac{\partial}{\partial t} K)=\eta^{i\bar j}g_{\bar j l}\frac{\partial}{\partial t}(g^{l\bar m}F_{\bar m i})=\eta^{i\bar j} (\dot u)_{i\bar j}=\eta^{i\bar j} \theta_{i\bar j}.
\end{equation*}
Hence according the maximal principle, we know that the maximum and minimum of $\theta(\cdot,t)$ attains at $t=0$, i.e. $\theta$ is bounded.

\subsection{Harnack-type Inequality}

Before considering the exponential convergence of $u(\cdot,t)$, we will first prove a Harnack-type inequality for positive solutions $\varphi$ of the following parabolic equation
 \begin{equation}\label{eqn-dot u}
 \frac{\partial v}{\partial t}=\eta^{i\bar j}v_{i\bar j},
 \end{equation}
where $\eta^{i\bar j}$ is the Hermitian matrix appeared above dependent on $u(x,t)$. This Harnack type inequality has been proved by Li-Yau for heat equation in \cite{LY}. And Cao proved it for K\"ahler-Ricci flow in \cite{Cao}. \added{The argument is standard and we give the details for completeness of this paper.}

For convenience, we set $f=\log v$ and
\[
\tilde f=t(\eta^{i\bar j}f_if_{\bar j} -\alpha \dot f)
\]
where $\alpha $ is a constant in $(1,2)$. By Equation \eqref{eqn-dot u}, we get that
\begin{equation}
\label{eqn-dot f}
\dot f -\eta^{i\bar j}f_{i\bar j}=\eta^{i\bar j}f_if_{\bar j}
\end{equation}
and
\begin{equation}
\label{eqn-F-2}
\tilde f=-t\eta^{i\bar j} f_{i\bar j} -t (\alpha-1)\dot f.
\end{equation}

\begin{yl}
\label{lemma-5.1}
  There exist constants $C_1$ and $C_2$ which depend on the bound of $F$ and the derivatives of $F$ such that the function $\tilde f$ satisfies the following inequality
  \[
  \eta^{k\bar l} \tilde f_{k\bar l} -\dot {\tilde f}\geq \frac{t}{2n}(\eta^{i\bar j}f_if_{\bar j} -\dot f)^2 -2\operatorname{\mathop{Re}}(\eta^{i\bar j} f_i{\tilde f}_{\bar j}) -(\eta^{i\bar j}f_if_{\bar j} -\alpha \dot f) -C_1 t \eta^{i\bar j} f_if_{\bar j} -C_2 t.
  \]
\end{yl}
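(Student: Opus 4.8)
The plan is to carry out the Li--Yau--Cao Bochner computation, keeping careful track of the fact that the background metric $\eta$ is time-dependent and not K\"ahler. Write $f=\log v$ and $G:=\eta^{i\bar j}f_if_{\bar j}-\alpha\dot f$, so that $\tilde f=tG$ and hence
\[
\eta^{k\bar l}\tilde f_{k\bar l}-\dot{\tilde f}=t\bigl(\eta^{k\bar l}G_{k\bar l}-\dot G\bigr)-G;
\]
it therefore suffices to bound $\eta^{k\bar l}G_{k\bar l}-\dot G$, i.e.\ minus the operator $\frac{\partial}{\partial t}-\Delta_\eta$ applied to $G$, from below. I would apply $\frac{\partial}{\partial t}-\Delta_\eta$ separately to the two pieces $\eta^{i\bar j}f_if_{\bar j}$ and $\dot f$ of $G$.

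For the gradient piece, differentiating $\eta^{i\bar j}f_if_{\bar j}$ twice in space and once in time and commuting the covariant derivatives (which only produces curvature terms of $\omega$), one gets
\[
\Bigl(\frac{\partial}{\partial t}-\Delta_\eta\Bigr)\bigl(\eta^{i\bar j}f_if_{\bar j}\bigr)
=2\RE\bigl(\eta^{i\bar j}f_i\,\partial_{\bar j}(\eta^{k\bar l}f_kf_{\bar l})\bigr)-\eta^{i\bar j}\eta^{k\bar l}f_{i\bar l}f_{k\bar j}-\eta^{i\bar j}\eta^{k\bar l}f_{ik}f_{\bar j\bar l}+E,
\]
where \eqref{eqn-dot f} has been used to replace $\partial_{\bar j}\dot f-\partial_{\bar j}(\Delta_\eta f)$ by $\partial_{\bar j}(\eta^{k\bar l}f_kf_{\bar l})$, and $E$ collects every term involving $\partial\eta$, $\partial\bar\partial\eta$, $\partial_t\eta$ or the curvature of $\omega$. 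Using the uniform two-sided bound $\omega\le\eta\le C\omega$ and the uniform bounds on $\eta$ and its first two spatial derivatives and its time derivative --- which, exactly as in the statement, reduce to bounds on $F$ and its derivatives, already available after Section~\ref{section-5} --- one sees that $E$ is dominated by $C\,\eta^{i\bar j}f_if_{\bar j}$ plus Hessian cross terms $C|\nabla f|_\eta\bigl(|\nabla\bar\nabla f|_\eta+|\nabla\nabla f|_\eta\bigr)$. For the $\dot f$ piece, differentiating \eqref{eqn-dot f} in $t$ gives
\[
\Bigl(\frac{\partial}{\partial t}-\Delta_\eta\Bigr)\dot f=(\partial_t\eta^{k\bar l})f_{k\bar l}+(\partial_t\eta^{i\bar j})f_if_{\bar j}+2\RE\bigl(\eta^{i\bar j}f_i\,\partial_{\bar j}\dot f\bigr),
\]
whose first term is bounded by $\varepsilon\,\eta^{i\bar j}\eta^{k\bar l}f_{i\bar l}f_{k\bar j}+C_\varepsilon$ and whose second by $C\,\eta^{i\bar j}f_if_{\bar j}$.

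The decisive point is that when one forms $-t$ times the first identity plus $t\alpha$ times the second, the two first-order terms combine into $-2t\,\RE\bigl(\eta^{i\bar j}f_i\,\partial_{\bar j}(\eta^{k\bar l}f_kf_{\bar l}-\alpha\dot f)\bigr)=-2\,\RE(\eta^{i\bar j}f_i\tilde f_{\bar j})$, since $\tilde f_{\bar j}=t\,\partial_{\bar j}(\eta^{k\bar l}f_kf_{\bar l}-\alpha\dot f)$ and $t$ is spatially constant; this is exactly the transport term in the statement. The surviving good term is $t\,\eta^{i\bar j}\eta^{k\bar l}f_{i\bar l}f_{k\bar j}$, together with the nonnegative $t\,\eta^{i\bar j}\eta^{k\bar l}f_{ik}f_{\bar j\bar l}$. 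One then uses Cauchy--Schwarz, $\eta^{i\bar j}\eta^{k\bar l}f_{i\bar l}f_{k\bar j}\ge\frac{1}{n}\bigl(\eta^{i\bar j}f_{i\bar j}\bigr)^2$, and \eqref{eqn-dot f} again in the form $\eta^{i\bar j}f_{i\bar j}=\dot f-\eta^{i\bar j}f_if_{\bar j}$, to turn half of $t\,\eta^{i\bar j}\eta^{k\bar l}f_{i\bar l}f_{k\bar j}$ into $\frac{t}{2n}\bigl(\eta^{i\bar j}f_if_{\bar j}-\dot f\bigr)^2$; the remaining half, together with $t\,\eta^{i\bar j}\eta^{k\bar l}f_{ik}f_{\bar j\bar l}$, absorbs all the Hessian cross terms from $tE$ and from $t\alpha(\partial_t\eta^{k\bar l})f_{k\bar l}$ via Young's inequality with a small parameter. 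What is left over is exactly $-G=-(\eta^{i\bar j}f_if_{\bar j}-\alpha\dot f)$, a term $\ge-C_1t\,\eta^{i\bar j}f_if_{\bar j}$ gathering the first-order errors (among them $t(\alpha-1)(\partial_t\eta^{i\bar j})f_if_{\bar j}$ and the curvature contributions), and a pure $-C_2t$ coming from the $C_\varepsilon t$ produced when absorbing $(\partial_t\eta^{k\bar l})f_{k\bar l}$. Collecting these terms gives the asserted inequality.

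I expect the main obstacle to be purely the bookkeeping of this Bochner computation for the non-K\"ahler, time-dependent metric $\eta$ without the luxury of diagonalizing it: one must check that every term produced by $\partial\eta$, $\partial\bar\partial\eta$ and $\partial_t\eta$ --- equivalently by $\nabla F$, $\nabla\bar\nabla F$ and $\dot F=\partial\bar\partial\dot u$ --- falls into one of the three admissible classes (order $\eta^{i\bar j}f_if_{\bar j}$; a Hessian cross term absorbable into $t\,\eta^{i\bar j}\eta^{k\bar l}(f_{i\bar l}f_{k\bar j}+f_{ik}f_{\bar j\bar l})$; or a bounded constant). No new analytic estimate is required: the equivalence of $\eta$ with $\omega$ and the uniform bounds on the derivatives of $\eta$ are consequences of the $C^2$-smallness of $u$ (Theorem~\ref{thm-smallhessian}) and the higher-order bounds obtained in Section~\ref{section-5}.
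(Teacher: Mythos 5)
Your proposal is correct and is essentially the paper's own argument: both carry out the standard Li--Yau--Cao gradient-estimate computation for the time-dependent operator $\Delta_\eta$, producing the same good Hessian-squared terms, the same transport term $-2\operatorname{\mathop{Re}}(\eta^{i\bar j}f_i\tilde f_{\bar j})$, the same absorption of the $\partial\eta^{-1}$, $\partial_t\eta^{-1}$ and curvature errors, and the same arithmetic--geometric mean step giving $\frac{t}{2n}(\eta^{i\bar j}f_if_{\bar j}-\dot f)^2$. The only difference is bookkeeping: you apply $\frac{\partial}{\partial t}-\Delta_\eta$ to $G=\eta^{i\bar j}f_if_{\bar j}-\alpha\dot f$ and use the $t$-differentiated evolution equation for $\dot f$, whereas the paper expands $\Delta_\eta\tilde f$ and $\dot{\tilde f}$ directly into the terms $F_1,\dots,F_{10}$ and cancels the resulting $\ddot f$ terms by substitution, which amounts to the same cancellation.
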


\begin{proof}
  By direct computation, we have
  \begin{equation}
    \label{eqn-dot F}
    \begin{split}
      \dot {\tilde f}&=\eta^{i\bar j} f_if_{\bar j} -\alpha \dot f+2 t\operatorname{\mathop{Re}}(\eta^{i\bar j}f_{\bar j} \dot f_i) +t \frac{\partial \eta^{i\bar j}}{\partial t} f_if_{\bar j} -\alpha t\ddot f
    \end{split}
  \end{equation}
  and
  \begin{equation}
    \label{eqn-laplace F}
    \begin{split}
      \eta^{k\bar l}{\tilde f}_{k\bar l}=\repl{+}& t\eta^{k\bar l}(\underline{\eta^{i\bar j} f_{ik}f_{\bar j\bar l}}_{F_1} +\underline{\eta^{i\bar j} f_{i\bar l}f_{\bar jk}}_{F_2} +\underline{\eta^{i\bar j}_{\repl{i\bar j},k}f_{i\bar l}f_{\bar j}}_{F_3} +\underline{\eta^{i\bar j}_{\repl{i\bar j},k}f_{i}f_{\bar j \bar l}}_{F_4}\\
      +&\underline{\eta^{i\bar j}f_{ik\bar l}f_{\bar j}}_{F_5} +\underline{\eta^{i\bar j} f_i f_{\bar j k\bar l}}_{F_6}+\underline{\eta^{i\bar j}_{\repl{i\bar j},\bar l}f_{ik} f_{\bar j}}_{F_7} +\underline{\eta^{i\bar j}_{\repl{i\bar j},\bar l}f_{i}f_{\bar jk}}_{F_8} +\underline{\eta^{i\bar j}_{\repl{i\bar j},k\bar l}f_if_{\bar j}}_{F_9}\\
      -&\underline{\alpha \dot f_{k\bar l}}_{F_{10}})\\
      =\repl{+}&\sum_{i=1}^{10} F_i.
    \end{split}
  \end{equation}

  For any $\varepsilon>0$, we have the following inequality
  \[
  \begin{split}
  |F_3+F_8|&\leq \frac{2 t}{\varepsilon}\eta^{i\bar j}f_if_{\bar j}+2\varepsilon F_2,\\
  |F_4+F_7|&\leq \frac{2 t}{\varepsilon}\eta^{i\bar j}f_if_{\bar j}+2\varepsilon F_1
  \end{split}
  \]
  according to Cauchy inequality. Since $\eta^{i\bar j}_{\repl{i\bar j},k\bar l}$ is bounded along \added{the} line bundle MCF, we know that $F_9$ satisfies
  \[
  |F_9|\leq Ct\eta^{i\bar j}f_if_{\bar j}.
  \]
  Furthermore, we can also estimate $F_5+F_6$ and $F_{10}$ as follow
  \begin{align*}
    F_5+F_6&=t\eta^{i\bar j}\eta^{k\bar l}(f_{k\bar li}f_{\bar j} +f_a R_{k\bar ai\bar l}f_{\bar j} +f_{i}f_{k\bar l\bar j})\\
    &\geq -Ct\eta^{i\bar j}f_if_{\bar j} +2 t\operatorname{\mathop{Re}}(\eta^{i\bar j}f_{\bar j}(\eta^{k\bar l}f_{k\bar l})_{i}) -t\eta^{i\bar j}(\eta^{k\bar l}_{\repl{k\bar l},i}f_{k\bar l}f_{\bar j} +\eta^{k\bar l}_{\repl{k\bar l},\bar j}f_{k\bar l}f_{i})\\
    &\geq -Ct\eta^{i\bar j}f_if_{\bar j} +2 t\operatorname{\mathop{Re}}(\eta^{i\bar j}f_{\bar j}(\eta^{k\bar l}f_{k\bar l})_{i}) -\frac{t}{\varepsilon}\eta^{i\bar j}f_if_{\bar j} -\varepsilon tF_2\\
    &=-Ct\eta^{i\bar j}f_if_{\bar j} -2 \operatorname{\mathop{Re}}(\eta^{i\bar j}f_{\bar j}{\tilde f}_{i}) -2t(\alpha-1)\operatorname{\mathop{Re}}(\eta^{i\bar j}f_{\bar j}\dot f_i) -\frac{t}{\varepsilon}\eta^{i\bar j}f_if_{\bar j} -\varepsilon t F_2\\
    &=-Ct\eta^{i\bar j}f_if_{\bar j} -2 \operatorname{\mathop{Re}}(\eta^{i\bar j}f_{\bar j}{\tilde f}_{i})  -(\alpha-1)\dot {\tilde f} +(\alpha-1) (\eta^{i\bar j}f_if_{\bar j} -\alpha \dot f)\\
    &\repl{=}+(\alpha-1) t \frac{\partial \eta^{i\bar j}}{\partial t} f_i f_{\bar j} -\alpha(\alpha-1)t\ddot f -\frac{t}{\varepsilon}\eta^{i\bar j}f_if_{\bar j} -\varepsilon tF_2\\
    &\geq -Ct\eta^{i\bar j}f_if_{\bar j} -2 \operatorname{\mathop{Re}}(\eta^{i\bar j}f_{\bar j}{\tilde f}_{i}) -(\alpha-1)\dot {\tilde f} +(\alpha-1) (\eta^{i\bar j}f_if_{\bar j} -\alpha \dot f)\\
    &\repl{\geq }-\alpha(\alpha-1)t\ddot f -\frac{t}{\varepsilon}\eta^{i\bar j}f_if_{\bar j} -\varepsilon tF_2
  \end{align*}
  and
  \begin{align*}
    F_{10}&=-\alpha t(\frac{\tilde f}{t^2}-\frac{\dot {\tilde f}}{t}-(\alpha-1)\ddot {\tilde f})+\alpha tf_{k\bar l}\frac{\partial\eta^{k\bar l}}{\partial t}\\
    &\geq -\frac{Ct}{\varepsilon}-\varepsilon F_2 -\frac{\alpha {\tilde f}}{t}+\alpha \dot {\tilde f} +t\alpha(\alpha-1)\ddot f.
  \end{align*}
  Adding all inequalities above, we get that
  \begin{equation}
    \label{eqn-laplace F-2}
    \begin{split}
      \eta^{i\bar j} {\tilde f}_{i\bar j}\geq&\repl{+} \dot {\tilde f}-2\operatorname{\mathop{Re}}(\eta^{i\bar j}{\tilde f}_i f_{\bar j}) -(\eta^{i\bar j}f_i f_{\bar j}- \alpha \dot f) +t(1-4\varepsilon) \eta^{i\bar j}\eta^{k\bar l} f_{i\bar l}f_{k\bar j}\\
       &+t(1-2\varepsilon) \eta^{i\bar j}\eta^{k\bar l} f_{ik} f_{\bar j\bar l} -t(C+\frac{5}{\varepsilon})\eta^{i\bar j} f_i f_{\bar j} -\frac{Ct}{\varepsilon}.
    \end{split}
  \end{equation}

  Taking the constant $\varepsilon$ small enough and applying the following arithmetic-geometric mean inequality
  \[
  \eta^{i\bar j} \eta^{k\bar l} f_{i\bar l}f_{k\bar j}\geq \frac{1}{n} \left(\eta^{i\bar j}f_{i\bar j}\right)^2 =\frac{1}{n}(\eta^{i\bar j} f_i f_{\bar j}- \dot f)^2,
  \]
  we obtain that
  \[
    \label{eqn-laplace F-22}
    \begin{split}
      \eta^{i\bar j} {\tilde f}_{i\bar j}-\dot {\tilde f}\geq&\repl{+} \frac{t}{2n} (\eta^{i\bar j} f_i f_{\bar j} -\dot f)^2 -2\operatorname{\mathop{Re}}(\eta^{i\bar j}{\tilde f}_i f_{\bar j}) -(\eta^{i\bar j}f_i f_{\bar j}- \alpha \dot f) \\
       &-C_1t\eta^{i\bar j} f_i f_{\bar j} -C_2t
    \end{split}
  \]
  which is the result desired.
\end{proof}

\begin{yl}
\label{lemma-5.2}
  There exists constants $C_1$ and $C_2$  which depend on $F$ and the derivatives of $F$ such that for all $t>0$, the following inequality holds
  \[
  \eta^{i\bar j}f_if_{\bar j} -\alpha \dot f\leq C_1 +\frac{C_2}{t}.
  \]
\end{yl}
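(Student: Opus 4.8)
The plan is to run the maximum principle for $\tilde f$ on $X\times[0,T]$ with $T>0$ arbitrary but fixed, feeding in the differential inequality of Lemma \ref{lemma-5.1}. Since $X$ is compact, $\tilde f$ attains its maximum over $X\times[0,T]$ at some point $(x_0,t_0)$. If this maximum is $\le 0$, or if $t_0=0$ (where $\tilde f$ vanishes by its definition), then $\eta^{i\bar j}f_if_{\bar j}-\alpha\dot f\le 0$ on all of $X\times[0,T]$ and the asserted bound holds trivially for any nonnegative constants; so we may assume $t_0>0$ and $\tilde f(x_0,t_0)>0$. At $(x_0,t_0)$ one has $\nabla\tilde f=0$, $\dot{\tilde f}\ge 0$ and $\Delta_\eta\tilde f=\eta^{k\bar l}\tilde f_{k\bar l}\le 0$; hence the left-hand side $\eta^{k\bar l}\tilde f_{k\bar l}-\dot{\tilde f}$ in Lemma \ref{lemma-5.1} is $\le 0$ there, while the gradient term $2\RE(\eta^{i\bar j}f_i\tilde f_{\bar j})$ on the right-hand side drops out. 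Thus at $(x_0,t_0)$ we obtain the scalar inequality
\[
0\ \ge\ \frac{t_0}{2n}\left(\eta^{i\bar j}f_if_{\bar j}-\dot f\right)^2-\left(\eta^{i\bar j}f_if_{\bar j}-\alpha\dot f\right)-C_1\,t_0\,\eta^{i\bar j}f_if_{\bar j}-C_2\,t_0 .
\]

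Next I would pass to the two scalar unknowns $y:=\eta^{i\bar j}f_if_{\bar j}\ge 0$ and $\mu:=y-\alpha\dot f=\tilde f(x_0,t_0)/t_0>0$ (all evaluated at $(x_0,t_0)$). Then $\dot f=(y-\mu)/\alpha$, so $y-\dot f=((\alpha-1)y+\mu)/\alpha$ and
\[
\left(y-\dot f\right)^2=\frac{1}{\alpha^2}\left(\mu^2+2(\alpha-1)y\mu+(\alpha-1)^2y^2\right).
\]
Substituting and discarding the nonnegative cross term $2(\alpha-1)y\mu$, the inequality becomes
\[
0\ \ge\ \frac{t_0}{2n\alpha^2}\,\mu^2+\frac{t_0(\alpha-1)^2}{2n\alpha^2}\,y^2-\mu-C_1\,t_0\,y-C_2\,t_0 .
\]
The crucial point is to absorb the "bad" term $-C_1t_0y$ against the retained term $\frac{t_0(\alpha-1)^2}{2n\alpha^2}y^2$; by the elementary estimate $\frac{(\alpha-1)^2}{2n\alpha^2}y^2-C_1y\ge-\frac{C_1^2n\alpha^2}{2(\alpha-1)^2}$, valid precisely because $\alpha>1$, this costs only an extra term $-C_3t_0$. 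One is thereby reduced to a quadratic inequality in $\mu$ alone,
\[
\frac{t_0}{2n\alpha^2}\,\mu^2-\mu-(C_2+C_3)\,t_0\ \le\ 0 .
\]

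Solving this quadratic and using $\sqrt{1+s}\le 1+\sqrt s$ gives $\mu\le \dfrac{2n\alpha^2}{t_0}+\sqrt{2(C_2+C_3)n\alpha^2}$, that is $\tilde f(x_0,t_0)=t_0\mu\le C_1't_0+C_2'\le C_1'T+C_2'$ with $C_1',C_2'$ depending only on $n,\alpha$ and the constants of Lemma \ref{lemma-5.1}, in particular independent of $T$. Since $\tilde f(x,T)\le\tilde f(x_0,t_0)$ for every $x\in X$, dividing by $T$ at the terminal time yields $\eta^{i\bar j}f_if_{\bar j}-\alpha\dot f\le C_1'+C_2'/T$ on $X\times\{T\}$; as $T>0$ was arbitrary, this is the claim.

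The only genuine difficulty is the treatment of the $-C_1t\,\eta^{i\bar j}f_if_{\bar j}$ term in Lemma \ref{lemma-5.1} — the analogue of the Ricci-curvature term in the classical Li--Yau estimate — which forces one to retain the full square $(\eta^{i\bar j}f_if_{\bar j}-\dot f)^2$ rather than just its $\mu^2$ part, and which is exactly the place where $\alpha>1$ is used. A secondary point to record is that the constants $C_1,C_2$ produced by Lemma \ref{lemma-5.1} are genuinely uniform in $T$: this follows from the $C^2$-smallness of $u$ (Theorem \ref{thm-smallhessian}) together with the time-uniform higher-order bounds on $u$, hence on $F$ and all its derivatives, established in Section \ref{section-5}. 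Finally, the degenerate cases ($\tilde f\le 0$ at the maximum, or the maximum occurring at $t=0$) should be dispatched separately, as indicated above.
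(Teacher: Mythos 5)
Your proof is correct. The setup is the same as the paper's: both run the maximum principle for $\tilde f$ on $X\times[0,T]$, note that at a positive maximum $(x_0,t_0)$ with $t_0>0$ one has $\nabla \tilde f=0$, $\Delta_\eta \tilde f\le 0$, $\dot{\tilde f}\ge 0$, and feed this into Lemma \ref{lemma-5.1} to obtain the pointwise inequality $0\ge \frac{t_0}{2n}(\eta^{i\bar j}f_if_{\bar j}-\dot f)^2-(\eta^{i\bar j}f_if_{\bar j}-\alpha\dot f)-C_1t_0\,\eta^{i\bar j}f_if_{\bar j}-C_2t_0$, which is exactly the paper's inequality \eqref{eqn-Harnack-1}. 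Where you differ is in how this scalar inequality is exploited. The paper splits into the two cases $\dot f(x_0,t_0)>0$ and $\dot f(x_0,t_0)\le 0$, and in the second case derives separate bounds on $\eta^{i\bar j}f_if_{\bar j}$ and on $-\dot f$ through a chain of Cauchy absorptions before recombining them. You instead set $y=\eta^{i\bar j}f_if_{\bar j}$, $\mu=y-\alpha\dot f$, substitute $\dot f=(y-\mu)/\alpha$, discard the nonnegative cross term $2(\alpha-1)y\mu$, absorb the linear term $C_1t_0y$ into the retained $\frac{t_0(\alpha-1)^2}{2n\alpha^2}y^2$ (the one place where $\alpha>1$ is essential, matching the paper's use of $\alpha\in(1,2)$), and solve a single quadratic in $\mu$; dividing $\tilde f(x,T)\le \tilde f(x_0,t_0)\le C_1'T+C_2'$ by $T$ finishes the argument, with constants visibly uniform in $T$. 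This unified treatment avoids the case analysis entirely and makes the dependence of the constants on $n$, $\alpha$, $C_1$, $C_2$ explicit, at the cost of no extra hypotheses; the paper's route is more pedestrian but lands on the same bound. Your closing remarks on the degenerate cases ($t_0=0$ or nonpositive maximum) and on the time-uniformity of the constants in Lemma \ref{lemma-5.1} are also consistent with what the paper needs.
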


\begin{proof}
  For any fixed $T_0>0$, we assume that ${\tilde f}$ attained its maximum in $X\times [0,T_0]$ at $(x_0,t_0)$. If $t_0=0$, then we get the required inequality. So we can just consider the case that $t_0>0$. Then at $(x_0,t_0)$, by Lemma \ref{lemma-5.1},
  \begin{equation}
  \label{eqn-Harnack-1}
    \frac{t_0}{2n}(\eta^{i\bar j}f_i f_{\bar j} -\dot f)^2-(\eta^{i\bar j}f_i f_{\bar j}-\alpha \dot f) \leq C_1 t_0 \eta^{i\bar j} f_i f_{\bar j} +C_2 t_0.
  \end{equation}
  In the case $\dot f(x_0,t_0)>0$, we have the following inequality
  \[
    \frac{t_0}{2n}(\eta^{i\bar j}f_i f_{\bar j} -\dot f)^2-(\eta^{i\bar j}f_i f_{\bar j}-\dot f) \leq C_1 t_0 \eta^{i\bar j} f_i f_{\bar j} +C_2 t_0
  \]
  since $\alpha\in (1,2)$. Hence, at $(x_0,t_0)$,
  \[
  \eta^{i\bar j} f_i f_{\bar j} -\dot f\leq C_3 \sqrt{\eta^{i\bar j} f_i f_{\bar j}} +C_4 +\frac{C_5}{t_0} \leq \left(1-\frac{1}{\alpha}\right) \eta^{i\bar j} f_i f_{\bar j} +C_6+\frac{C_5}{t_0}.
  \]
  According to $\alpha \in (1,2)$ and $\dot f>0$, there holds
  \[
  \eta^{i\bar j}f_i f_{\bar j}-\alpha \dot f\leq C_6+\frac{C_5}{t_0}.
  \]
  By the definition of ${\tilde f}$,
  \[
  {\tilde f}(x_0,t_0)= t_0(\eta^{i\bar j}f_i f_{\bar j}-\alpha \dot f)\leq C_6 t_0 +C_5.
  \]
  Therefore, for all $x\in M$,
  \[
  {\tilde f}(x,T_0)\leq {\tilde f}(x_0,t_0)\leq C_6 t_0 +C_5\leq C_6 T_0 +C_5
  \]
  i.e.
  \[
  (\eta^{i\bar j}f_i f_{\bar j} -\alpha \dot f)(x,T_0) \leq C+\frac{C}{T_0}.
  \]
  So we complete the proof in this case.

  Now let us consider the case when $\dot f(x_0,t_0)\leq 0$. By the inequality \eqref{eqn-Harnack-1},
  \[
  \frac{t_0}{2n}(\eta^{i\bar j} f_i f_{\bar j})^2 -\eta^{i\bar j} f_i f_{\bar j} +\alpha \dot f \leq C_1 t_0 \eta^{i\bar j} f_i f_{\bar j} +C_2 t_0
  \]
  i.e.
  \[
  \frac{1}{2n}(\eta^{i\bar j}f_i f_{\bar j})^2 -(\frac{1}{t_0} +C_1)\eta^{i\bar j}f_i f_{\bar j}\leq C_2 -\frac{\alpha \dot f}{t_0}.
  \]
  Then by Cauchy inequality,
  \[
  \frac{1}{2n}(\eta^{i\bar j}f_i f_{\bar j})^2 -(\frac{1}{t_0} +C_1)\eta^{i\bar j}f_i f_{\bar j}\leq C_2 + \left(\frac{C}{t_0}\right)^2 + \frac{\dot f^2}{4}
  \]
  Hence, at $(x_0,t_0)$,
  \begin{equation}
  \label{eqn-Harnack-2}
  \eta^{i\bar j} f_i f_{\bar j} \leq C+\frac{C}{t_0}-\frac{\dot f}{2}.
  \end{equation}
  On the other hand, by inequality \eqref{eqn-Harnack-1},
  \[
  \frac{t_0}{n}\dot f^2 +\alpha \dot f\leq C_1 t_0 \eta^{i\bar j} f_i f_{\bar j} +C_2 t_0 +\eta^{i\bar j} f_i f_{\bar j}
  \]
  i.e.
  \[
  \frac{1}{n} \dot f^2 +\frac{\alpha \dot f}{t_0}\leq C_1 \eta^{i\bar j} f_i f_{\bar j} +C_2 +\frac{\eta^{i\bar j} f_i f_{\bar j}}{t_0}.
  \]
  By Cauchy inequality,
  \[
  \frac{1}{n} \dot f^2 +\frac{\alpha \dot f}{t_0}\leq \frac{1}{4} \left( \eta^{i\bar j} f_i f_{\bar j}\right)^2 +\left(\frac{C}{t_0}\right)^2 +C.
  \]
  Hence, at $(x_0,t_0)$,
  \begin{equation}
  \label{eqn-Harnack-3}
  -\dot f\leq \frac{C}{t_0}+\frac{\eta^{i\bar j}f_i f_{\bar j}}{2}+C.
  \end{equation}
  By inequalities \eqref{eqn-Harnack-2} and \eqref{eqn-Harnack-3}, we obtain that
  \[
  \eta^{i\bar j} f_i f_{\bar j}\leq C+\frac{C}{t_0} +\frac{\eta^{i\bar j} f_i f_{\bar j}}{4}
  \]
  i.e.
  \[
  \eta^{i\bar j}f_i f_{\bar j}\leq C+\frac{C}{t_0}.
  \]
  Asserting this inequality to the inequality \eqref{eqn-Harnack-3},
  \[
  -\dot f\leq C+\frac{C}{t_0}.
  \]
  Therefore, we get that
  \[
  \eta^{i\bar j}f_i f_{\bar j} -\alpha \dot f\leq C+\frac{C}{t_0}.
  \]
  Same argument as in the case $\dot f>0$ implies the required inequality.
\end{proof}

As an application of the previous lemma, we derive the following Harnack-type inequality of Li-Yau \cite{LY} in the case of \added{the} line bundle \replaced{MCF}{mean curvature flow}.

\begin{thm}
\label{theorem-5.3}
  There exists constants $C_1$, $C_2$ and $C_3$ such that for all $0<t_1<t_2$, we have the following Harnack-type inequality
  \[
  \sup_{x\in X} v(x,t_1) \leq \inf_{x\in X }  v(x,t_2)\left(\frac{t_2}{t_1}\right)^{C_2} e^{\frac{C_3}{t_2-t_1}+C_1(t_2-t_1)}.
  \]
\end{thm}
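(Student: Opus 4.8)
The plan is to integrate the differential Harnack inequality of Lemma~\ref{lemma-5.2} along space-time curves, following the classical argument of Li--Yau \cite{LY} (and Cao \cite{Cao} in the parabolic setting). Keeping $f=\log v$ and a fixed $\alpha\in(1,2)$, I would first rewrite Lemma~\ref{lemma-5.2} as
\[
\dot f-\frac1\alpha\,\eta^{i\bar j}f_if_{\bar j}\ \ge\ -\frac1\alpha\Big(C_1+\frac{C_2}{t}\Big),\qquad t>0 .
\]
Then, given $x_1,x_2\in X$ and $0<t_1<t_2$, I would take $\gamma\colon[t_1,t_2]\to X$ to be a constant-speed minimizing geodesic of $(X,\omega)$ with $\gamma(t_1)=x_1$, $\gamma(t_2)=x_2$, so that $|\dot\gamma|_\omega\equiv d_\omega(x_1,x_2)/(t_2-t_1)\le \operatorname{diam}(X,\omega)/(t_2-t_1)$.

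Along $\gamma$ one has $\tfrac{d}{ds}f(\gamma(s),s)=\dot f+2\RE(f_i\dot\gamma^i)$; applying the Cauchy--Schwarz inequality to the pairing $f_i\dot\gamma^i$ with weight $\eta$ and the weighted mean inequality $2ab\le\tfrac1\alpha a^2+\alpha b^2$, followed by the rewritten Lemma~\ref{lemma-5.2}, gives
\[
\frac{d}{ds}f(\gamma(s),s)\ \ge\ \dot f-\frac1\alpha\,\eta^{i\bar j}f_if_{\bar j}-\alpha\,\eta_{i\bar j}\dot\gamma^i\overline{\dot\gamma^j}\ \ge\ -\frac1\alpha\Big(C_1+\frac{C_2}{s}\Big)-\alpha\,\eta_{i\bar j}\dot\gamma^i\overline{\dot\gamma^j}.
\]
By the uniform $C^2$ estimate of Section~\ref{section-4} the curvature $F=\hat F+\partial\bar\partial u$ is uniformly bounded along the flow (cf.\ \eqref{eqn-FhatF} together with Theorem~\ref{thm-smallhessian}), so $\eta$ satisfies $I\le\eta\le C_\omega I$ uniformly in time; hence $\eta_{i\bar j}\dot\gamma^i\overline{\dot\gamma^j}\le C_\omega|\dot\gamma|_\omega^2\le C_\omega\operatorname{diam}(X,\omega)^2/(t_2-t_1)^2$. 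Integrating the last display from $t_1$ to $t_2$ then yields
\[
f(x_2,t_2)-f(x_1,t_1)\ \ge\ -\frac{C_1}{\alpha}(t_2-t_1)-\frac{C_2}{\alpha}\log\frac{t_2}{t_1}-\frac{C_3}{t_2-t_1}.
\]
Taking the supremum over $x_1$, the infimum over $x_2$, and exponentiating (recall $v=e^f>0$) gives
\[
\sup_{x\in X}v(x,t_1)\ \le\ \inf_{x\in X}v(x,t_2)\,\Big(\frac{t_2}{t_1}\Big)^{C_2/\alpha}\,e^{\frac{C_1}{\alpha}(t_2-t_1)+\frac{C_3}{t_2-t_1}},
\]
which is the claimed inequality after renaming constants (one may fix $\alpha=\tfrac32$).

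The substantive work is entirely upstream: it is the gradient estimate of Lemma~\ref{lemma-5.2}, resting on Lemma~\ref{lemma-5.1} and on the fact that $\eta$ and its derivatives remain bounded along the flow, which is exactly where the smallness of $D^2u$ established in Section~\ref{section-4} enters. Once that is in hand, the path-integration above is routine; the only points demanding a little care are the Cauchy--Schwarz step with the \emph{time-dependent} weight $\eta$ and the uniform two-sided bound $I\le\eta\le C_\omega I$, which is precisely what permits integrating against a single fixed $\omega$-geodesic rather than a time-dependent one.
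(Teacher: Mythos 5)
Your proof is correct and follows essentially the same route as the paper: both integrate the differential Harnack estimate of Lemma \ref{lemma-5.2} along an $\omega$-geodesic traversed in time from $(x_1,t_1)$ to $(x_2,t_2)$, use a weighted Cauchy--Schwarz/Young step with the metric $\eta$ to absorb the gradient term, and then take the supremum and infimum before exponentiating. The only differences are cosmetic (parametrizing by $s\in[t_1,t_2]$ rather than $[0,1]$, and Young's inequality in place of completing the square), and you are in fact slightly more careful than the paper in making explicit the uniform bound $I\le\eta\le C_\omega I$ coming from Theorem \ref{thm-smallhessian}, which the paper's proof uses implicitly when bounding $-df(\dot\gamma)$.
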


\begin{proof}
  Let $x,y\in X$ be two arbitrary points and $\gamma$ be the geodesic with respect to the background metric $\omega$ such that
  \[
  \gamma(0)=x \text{ and }\gamma(1)=y.
  \]
  We also define a curve $\xi(s):[0,1]\to X\times [t_1,t_2]$ by
  \[
  \xi(s)=(\gamma(s),(1-s)t_1+st_2)
  \]
  i.e. $\xi$ is a curve in $X\times [t_1,t_2]$ connecting $(x,t_1)$ and $(y,t_2)$. Then by Lemma \ref{lemma-5.2},
  \[
  \begin{split}
    \ln\frac{v(x,t_1)}{v(y,t_2)}=& -\int_0^1 \frac{\partial }{\partial s} f(\xi(s))ds\\
    =& \int_0^1 (-df(\dot \gamma) -\dot f(t_2-t_1)) ds\\
    \leq & \int_0^1 (\sqrt{\eta^{i\bar j} f_i f_{\bar j}}-\frac{t_2-t_1}{\alpha} \eta^{i\bar j}f_if_{\bar j} -\dot f(t_2-t_1) +\frac{t_2-t_1}{\alpha} \eta^{i\bar j}f_if_{\bar j}) ds\\
    \leq &\int_0^1 \left(-\frac{\alpha}{4(t_2-t_1)} +C(t_2-t_1)+ \frac{C(t_2-t_1)}{(1-s)t_1+ st_2}\right)ds\\
    =&\frac{C_3}{t_2-t_1} +C_1(t_2-t_1) +C_2\ln\frac{t_2}{t_1}
  \end{split}
  \]
  i.e.
  \[
  v(x,t_1)\leq v(y,t_2)\left(\frac{t_2}{t_1}\right)^{C_2} e^{\frac{C_3}{t_2-t_1} +C_1(t_2-t_1)}.
  \]
  Since $x,y$ are arbitrary two points in $X$, we obtain the inequality required.
\end{proof}

\subsection{Exponential\deleted{ly} Convergence}
As a consequence of the Harnack-type inequality above, we first prove the following exponential estimate for
\[
\tilde u= u-\frac{\int_X u\omega^n}{\int_X \omega^n}.
\]

\begin{thm}
\label{thm-exponental-dot}
  There exist two constants $C_1$ and $C_2$ such that
  \[
  \left|\frac{\partial\tilde u}{\partial t}\right|\leq C_1 e^{-C_2 t}.
  \]
\end{thm}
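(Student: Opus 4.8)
The plan is to reduce the statement to an exponential decay estimate for the spatial oscillation of $\dot u=\theta(F_u)-\hat\theta$. Recall from the computation preceding this subsection that $\theta$, and hence $\dot u$, is a solution of the linear parabolic equation \eqref{eqn-dot u}, so by the maximum principle $M(t):=\max_{X}\dot u(\cdot,t)$ is non-increasing, $m(t):=\min_{X}\dot u(\cdot,t)$ is non-decreasing, and the oscillation $\mathrm{osc}(t):=M(t)-m(t)$ is non-increasing in $t$. Since $\frac{\partial\tilde u}{\partial t}=\dot u-\frac{\int_X\dot u\,\omega^n}{\int_X\omega^n}$ and the spatial average $\frac{\int_X\dot u\,\omega^n}{\int_X\omega^n}$ lies between $m(t)$ and $M(t)$, we have $\left|\frac{\partial\tilde u}{\partial t}\right|\le\mathrm{osc}(t)$ pointwise, so it suffices to prove $\mathrm{osc}(t)\le C_1 e^{-C_2 t}$.

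The main device is to feed the Harnack inequality of Theorem \ref{theorem-5.3} back into itself. Fix $t_1\ge 1$ and, for $\varepsilon>0$, consider on the slab $X\times[t_1,t_1+1]$ the two functions $v_\varepsilon^{+}:=M(t_1)-\dot u+\varepsilon$ and $v_\varepsilon^{-}:=\dot u-m(t_1)+\varepsilon$. Because $M(t_1)$ and $m(t_1)$ are \emph{constants}, these are again solutions of \eqref{eqn-dot u}, and by the monotonicity of $M$ and $m$ they are strictly positive on the slab. Applying Theorem \ref{theorem-5.3} with $t_2=t_1+1$, letting $\varepsilon\to 0$, and using $\left(\frac{t_1+1}{t_1}\right)^{C_2}\le 2^{C_2}$ for $t_1\ge1$, one obtains, with $C_0:=2^{C_2}e^{C_1+C_3}$,
\[
\mathrm{osc}(t_1)=\sup_X v_0^{+}(\cdot,t_1)\le C_0\,\inf_X v_0^{+}(\cdot,t_1+1)=C_0\big(M(t_1)-M(t_1+1)\big),
\]
and in the same way $\mathrm{osc}(t_1)\le C_0\big(m(t_1+1)-m(t_1)\big)$ from $v_0^{-}$. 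Adding the two inequalities gives $2\,\mathrm{osc}(t_1)\le C_0\big(\mathrm{osc}(t_1)-\mathrm{osc}(t_1+1)\big)$; enlarging $C_0$ if necessary so that $\gamma:=1-\frac{2}{C_0}\in(0,1)$, this reads $\mathrm{osc}(t_1+1)\le\gamma\,\mathrm{osc}(t_1)$.

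Iterating this step over the integers $k\ge1$ gives $\mathrm{osc}(k)\le\gamma^{k-1}\mathrm{osc}(1)$, and the monotonicity of $\mathrm{osc}$ on the intermediate intervals upgrades this to $\mathrm{osc}(t)\le\gamma^{\lfloor t\rfloor-1}\mathrm{osc}(1)\le C_1 e^{-C_2 t}$ for all $t\ge1$ with $C_2=-\ln\gamma>0$; on $[0,1]$ the crude bound $\mathrm{osc}(t)\le\mathrm{osc}(0)$ together with $e^{-C_2 t}\ge e^{-C_2}$ handles the estimate after enlarging $C_1$. Combined with the first paragraph, this yields $\left|\frac{\partial\tilde u}{\partial t}\right|\le C_1 e^{-C_2 t}$.

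The point that needs care is that the constants $C_1,C_2,C_3$ appearing in Theorem \ref{theorem-5.3} depend only on uniform bounds for $F$ and its derivatives; these are available precisely because of the long-time existence and uniform $C^\infty$ estimates established in Section \ref{section-5}, so the same constants serve on every slab $[t_1,t_1+1]$. A secondary subtlety is that one must shift $\dot u$ by its extreme values at the \emph{left} endpoint $t_1$ rather than by the moving quantities $M(t),m(t)$, so that the shifted functions remain solutions of \eqref{eqn-dot u}, and regularize by $\varepsilon>0$ so that the Harnack inequality, which is stated for strictly positive solutions, applies; letting $\varepsilon\to0$ at the end is harmless since the inequality is multiplicative.
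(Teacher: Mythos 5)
Your overall strategy (reduce to exponential decay of the oscillation of $\dot u$, then iterate a Harnack inequality for the linearized equation \eqref{eqn-dot u} on unit time slabs to get geometric decay) is exactly the paper's strategy, and your reduction $|\partial\tilde u/\partial t|\le\mathrm{osc}(t)$ and the $\varepsilon$-regularization in place of the strong maximum principle are fine. But the central step is not justified: you apply Theorem \ref{theorem-5.3} to $v^{\pm}_\varepsilon$ at the pair of absolute times $(t_1,t_1+1)$, using $\bigl(\tfrac{t_1+1}{t_1}\bigr)^{C_2}\le 2^{C_2}$. Theorem \ref{theorem-5.3} (via Lemma \ref{lemma-5.2}) is proved for solutions that are positive on the whole time interval starting at $0$: the function $\tilde f=t(\eta^{i\bar j}f_if_{\bar j}-\alpha\dot f)$ is controlled by a maximum principle on $X\times[0,T_0]$, which requires $f=\log v$ to be defined there. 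Your $v^{\pm}_\varepsilon=M(t_1)-\dot u+\varepsilon$, resp. $\dot u-m(t_1)+\varepsilon$, are positive only for $t\ge t_1$, so the hypotheses fail on $(0,t_1)$; the only legitimate remedy is to shift time so that $t_1$ becomes the new origin, and then the prefactor is governed by the \emph{relative} times, with the constant blowing up as the earlier comparison time approaches the start of positivity. In particular the inequality you extract, $\mathrm{osc}(t_1)=\sup_X v^{+}_0(\cdot,t_1)\le C_0\bigl(M(t_1)-M(t_1+1)\bigr)$, compares the supremum at the very initial time of positivity with the infimum one unit later; no Harnack inequality gives this, and it is false for general positive solutions of a heat-type equation on $[t_1,\infty)$ (take data at time $t_1$ that is a tall, very concentrated bump plus a tiny constant: the supremum at $t_1$ is huge while the infimum at $t_1+1$ is as small as you like).

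The repair is precisely what the paper does: after time-shifting, apply the Harnack inequality at interior relative times (the paper uses $1/2$ and $1$), which yields $M(t_1)-m(t_1+\tfrac12)\le C\bigl(M(t_1)-M(t_1+1)\bigr)$ and $M(t_1+\tfrac12)-m(t_1)\le C\bigl(m(t_1+1)-m(t_1)\bigr)$; adding these gives $\mathrm{osc}(t_1)+\mathrm{osc}(t_1+\tfrac12)\le C\bigl(\mathrm{osc}(t_1)-\mathrm{osc}(t_1+1)\bigr)$, hence $\mathrm{osc}(t_1+1)\le\frac{C-1}{C}\,\mathrm{osc}(t_1)$, and your iteration and the final reduction then go through verbatim. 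So the architecture of your proof is correct and coincides with the paper's, but as written the key Harnack application is at an inadmissible pair of times and must be replaced by the interior-time comparison above.
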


\begin{proof}
  For convenience, we denote $\varphi$ and $\tilde \varphi$ to be $\dot{u}$ and $\dot{\tilde u}$. It is easy to see that $\tilde \varphi$ and $\varphi$ satisfies
  \[
  \int_X\tilde \varphi\omega^n=0 \text{ and }  \frac{\partial \varphi}{\partial t}=\eta^{i\bar j}\varphi_{i\bar j}.
  \]
  Furthermore, for any fixed $t\in [0,\infty)$ and $x,y\in X$, functions $\tilde \varphi$ and $\varphi$ also satisfy the following relation
  \begin{equation}
  \label{eqn-tildephi-phi}
    |\tilde \varphi(x,t)-\tilde \varphi(y,t)|=|\varphi(x,t) -\varphi(y,t)|.
  \end{equation}
  It follows from the maximum principle for the parabolic equation that for any $0<t_1<t_2$, there holds
  \begin{equation}
    \label{eqn-sup}\sup_{y\in X}\varphi(y,t_2)\leq \sup_{y\in X} \varphi(y,t_1)\leq \sup_{y\in X}\varphi(y,0)
    \end{equation}
  and
    \begin{equation}
    \label{eqn-inf}\inf_{y\in X}\varphi(y,t_2)\geq \inf_{y\in X} \varphi(y,t_1)\geq \inf_{y\in X}\varphi(y,0).
  \end{equation}
  Let $m$ be an arbitrary positive integer. For any $(x,t)$, we define
  \[
  \xi_m(x,t)=\sup_{y\in X} \varphi(y,m-1) -\varphi(x,m-1+t)
  \]
  and
  \[
  \psi_m(x,t)=\varphi(x,m-1+t)-\inf_{y\in X} \varphi(y,m-1).
  \]
  Then according to Equations \eqref{eqn-sup} and \eqref{eqn-inf}, $\xi_m$ and $\psi_m$ are both non-negative and satisfy the following parabolic equation
  \[
  \frac{\partial \xi_m}{\partial t}(x,t) =\eta^{i\bar j}(x,m-1+t) (\xi_m)_{i\bar j}(x,t)
  \]
  and
  \[
    \frac{\partial \psi_m}{\partial t}(x,t) =\eta^{i\bar j}(x,m-1+t) (\psi_m)_{i\bar j}(x,t)
  \]
  where $\eta$ depends on the line bundle mean curvature flow $u$.

  In the case that $\varphi(x,m-1)$ is constant, the function $\varphi(x,t)$ must be constant for all $t\geq m-1$ by maximum principle. Hence $\tilde \varphi$ is also a constant for all $t\geq m-1$. But the average of $\tilde \varphi$ vanishes, we obtain that $\tilde \varphi(x,t) =0$ for all $t\geq m-1$. Then our theorem is obvious. Therefore we just need to deal with the case that $\varphi(x,m-1)$ is not constant.

  In the case that $\varphi(x,m-1)$ is not constant, $\xi_m$ must be positive at some point $(x_0,0)$. By \added{the} strong maximum principle, $\xi_m(x,t)$ must be positive for all $x\in X$ when $t>0$. Similarly, we also have $\psi_m(x,t)>0$ for all $x\in X$ when $t>0$. Hence, we can apply Theorem \ref{theorem-5.3} with $t_1=\frac{1}{2}$ and $t_2=1$ to obtain
  \begin{equation}
  \label{eqn-61}
  \begin{split}
    \sup_{y\in X} \varphi(y,m-1) -\inf_{y\in X} \varphi(y,m-\frac{1}{2})&\leq C(\sup_{y\in X} \varphi(y,m-1) -\sup_{y\in X}\varphi(y,m)),\\
    \sup_{y\in X} \varphi(y,m-\frac{1}{2}) -\inf_{y\in X} \varphi(y,m-1)&\leq C(\inf_{y\in X} \varphi(y,m) -\inf_{y\in X} \varphi(y,m-1))
  \end{split}
  \end{equation}
  where $C$ is a positive constant bigger than $1$. We also define $\chi(t)$ to be the oscillation of $\varphi(\cdot,t)$, i.e.
  \begin{equation}
  \label{eqn-62}
  \chi(t)=\sup_{y\in X} \varphi(y,t)- \inf_{y\in X} \varphi(y,t).
  \end{equation}
  Adding the inequalities \eqref{eqn-61} and \eqref{eqn-62} above gives us
  \[
  \chi(m-1)+\chi(m-\frac{1}{2})\leq C(\chi(m-1)-\chi(m)).
  \]
  Since $\chi$ is a non-negative function and $C>1$, there holds
  \[
  \chi(m)\leq \frac{C-1}{C}\chi(m-1).
  \]
  By induction, \deleted{we obtain}
  \begin{equation}
  \label{eqn-chi}
  \chi(m)\leq \left(\frac{C-1}{C}\right)^m \chi(0).
  \end{equation}
  According to the inequality \eqref{eqn-sup} and \eqref{eqn-inf}, we also know that $\chi(t)$ is decreasing in $t$. Therefore, we conclude from \eqref{eqn-chi} that
  \[
  \chi(t)\leq C_1e^{-C_2t}
  \]
  where $C_1=\frac{C\chi(0)}{C-1}$ and $C_2=\ln \frac{C}{C-1}$.

  To obtain the result in the theorem, we observe that there must be a point $x_t\in X$ such that $\tilde \varphi(x_t,t)=0$ for all $t>0$ since $\int_X \tilde \varphi\omega^n=0$. According to Equation \eqref{eqn-tildephi-phi}, for all $(x,t)\in X\times [0,\infty)$,
  \[
  \begin{split}
  |\tilde \varphi(x,t)|=&|\tilde \varphi(x,t)-\tilde \varphi(x_t,t)|\\
  =&|\varphi(x,t)-\varphi(x_t,t)|\leq \chi(t)\leq C_1 e^{-C_2t}
  \end{split}
  \]
  i.e. $\left|\dot{\tilde u}\right|\leq C_1 e^{-C_2t}$.
\end{proof}

We also have the following exponential convergence result for $u$ in $C^\infty$ norm.

\begin{thm}
  The function $\tilde u$ converges exponentially to $0$ smoothly.
\end{thm}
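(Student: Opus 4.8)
The plan is to upgrade the exponential decay of $\dot{\tilde u}$ from Theorem \ref{thm-exponental-dot} to exponential decay of $\tilde u$ itself in every $C^k$-norm. First I would obtain the $C^0$-decay of $\tilde u$ by integrating in time: since $\frac{\partial \tilde u}{\partial t}=\dot u - \frac{\int_X \dot u\,\omega^n}{\int_X\omega^n}=\dot{\tilde u}$ and $\|\dot{\tilde u}(\cdot,t)\|_{L^\infty}\leq C_1 e^{-C_2 t}$, for any $t_2>t_1$ we have
\[
\|\tilde u(\cdot,t_2)-\tilde u(\cdot,t_1)\|_{L^\infty}\leq \int_{t_1}^{t_2}\|\dot{\tilde u}(\cdot,s)\|_{L^\infty}\,ds\leq \frac{C_1}{C_2}e^{-C_2 t_1}.
\]
This shows $\tilde u(\cdot,t)$ is Cauchy in $C^0$ as $t\to\infty$, hence converges uniformly to some $\tilde u_\infty$, and letting $t_2\to\infty$ gives $\|\tilde u(\cdot,t)-\tilde u_\infty\|_{L^\infty}\leq \frac{C_1}{C_2}e^{-C_2 t}$. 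A short argument identifies $\tilde u_\infty$: the subsequential limit from Section \ref{section-5} is a dHYM potential whose associated $\theta(F)$ is constant, and since $\dot{\tilde u}\to 0$ one checks $\theta(F_{u_\infty})=\hat\theta$; because $\tilde u$ has zero average and along the flow $u_t$ differs from $\tilde u_t$ only by a spatial constant, the limit metric agrees with $\hat h$ up to scaling, forcing $\tilde u_\infty$ to be constant, hence $\tilde u_\infty\equiv 0$ (again by the zero-average normalization). Thus $\|\tilde u(\cdot,t)\|_{L^\infty}\leq C e^{-C_2 t}$.

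Next I would bootstrap to all higher derivatives using interior parabolic Schauder estimates on unit time-intervals, exploiting the uniform bounds already established in Section \ref{section-5}. From the long-time existence proof we have uniform $C^\infty$ bounds on $u_t$ (equivalently on $\tilde u_t$, which differs by a spatial constant) for all $t\geq 0$; in particular the coefficients $\eta^{i\bar j}$ of the linearized operator are uniformly bounded in every $C^k$. Write the flow as the linear parabolic equation $\partial_t \tilde u = \eta^{i\bar j}\tilde u_{i\bar j} + (\text{lower-order terms involving } \hat F)$ — more precisely, using the mean-value computation for $\theta(F_u)-\hat\theta$ as in the proof of Theorem \ref{thm-smallhessian}, one has $\partial_t\tilde u = a^{i\bar j}(x,t)\,\tilde u_{i\bar j} + b(x,t)\cdot\nabla\tilde u$ where the coefficients $a^{i\bar j}, b$ have uniformly bounded $C^k$-norms and $a^{i\bar j}$ is uniformly elliptic. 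Applying interior parabolic Schauder estimates on the slab $X\times[t,t+1]$ gives, for each $k$,
\[
\|\tilde u(\cdot,t+1)\|_{C^{k}}\leq C_k \|\tilde u\|_{L^\infty(X\times[t,t+1])}\leq C_k' e^{-C_2 t},
\]
which yields $\|\tilde u(\cdot,t)\|_{C^k}\leq \tilde C_k e^{-C_2(t-1)}$ for all $t\geq 1$, i.e.\ exponential convergence of $\tilde u$ to $0$ in $C^\infty$.

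The main obstacle — really the only subtle point — is the identification $\tilde u_\infty\equiv 0$, since the Schauder bootstrap is entirely standard once $C^0$-decay is in hand. The delicate part is making sure that the spatial constant relating $u_t$ and $\tilde u_t$ does not drift: one must check that $\frac{d}{dt}\int_X u_t\,\omega^n = \int_X(\theta(F_{u_t})-\hat\theta)\,\omega^n$ stays controlled, and combine this with the fact that $\theta(F_{u_t})\to\hat\theta$ (a consequence of $\dot u_t - \text{avg}\to 0$ together with the maximum principle for $\theta$ noted at the start of Section \ref{section-expon}, which shows $\operatorname{osc}_X\theta(\cdot,t)$ is decreasing) to conclude that the normalized potentials converge to a genuine solution of \eqref{eqn-dHYM-angle} cohomologous to $\hat F$; uniqueness of dHYM potentials up to constants (or simply the zero-average normalization applied to the limit) then forces $\tilde u_\infty = 0$. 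Alternatively, one can sidestep uniqueness entirely: the $C^0$-Cauchy estimate already shows $\tilde u(\cdot,t)\to\tilde u_\infty$ with $\tilde u_\infty$ having zero average, and then $\partial_t\tilde u\to 0$ forces $\Delta_\eta$ applied to the limit plus the $\hat F$-terms to vanish, i.e.\ $\theta(F_{\tilde u_\infty}+c) = \hat\theta$ for the appropriate constant $c$; since $\tilde u_\infty$ has zero average this is exactly the normalization pinning it down, and feeding $\|\tilde u(\cdot,t)\|_{L^\infty}\leq Ce^{-C_2t}$ back through the Schauder estimates completes the proof.
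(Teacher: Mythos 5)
Your proposal is essentially correct and follows the paper's skeleton in its first step --- integrate the exponential decay of $\dot{\tilde u}$ from Theorem \ref{thm-exponental-dot} in time to get $C^0$-decay --- but your higher-order bootstrap is genuinely different: you run parabolic Schauder estimates on unit time-slabs, whereas the paper differentiates $\int_X|D^k\tilde u|^2_\omega\,\omega^n$, integrates by parts so that all derivatives fall on $\tilde u$, uses the uniform bound on $D^{2k}\tilde u$ from Section \ref{section-5} together with the $L^2$-decay of $\dot{\tilde u}$, and concludes via Sobolev embedding. Both mechanisms work once the uniform $C^\infty$ estimates are in hand. Your route has one extra cost you should make explicit: the equation satisfied by $\tilde u$ is $\partial_t\tilde u=a^{i\bar j}\tilde u_{i\bar j}-m(t)$ with $a^{i\bar j}=\int_0^1\eta^{i\bar j}_{su}\,ds$ and the spatially constant forcing $m(t)=\frac{\int_X\dot u\,\omega^n}{\int_X\omega^n}$ (there is no genuine $b\cdot\nabla\tilde u$ term), so the Schauder estimate on a slab also requires $m(t)$ to decay exponentially --- exactly the ``drift'' you flag. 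This decay can be extracted from the cohomological constraint $\operatorname{Im}\int_Xe^{-\sqrt{-1}\hat\theta}(\omega-F_{u_t})^n=0$ together with the smallness of $\theta(F_{u_t})-\hat\theta$, or, more cheaply, by interpolating the $C^0$-decay of $\tilde u$ against the uniform $C^4$ bound (which, incidentally, already yields exponential decay of all $C^k$-norms and makes the Schauder step unnecessary). On the identification $\tilde u_\infty\equiv0$ you are in fact more careful than the paper, whose proof silently ``integrates from $+\infty$'' and thus assumes the limit vanishes; your primary argument (the limit solves \eqref{eqn-dHYM-angle} with phase $\hat\theta$, then uniqueness of dHYM potentials up to constants, then the zero-average normalization) is sound, though the uniqueness statement must be imported from \cite{JY} since this paper never states it.

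The one step I would not accept as written is your ``sidestep uniqueness'' alternative. Knowing that $\tilde u_\infty$ has zero average and that $\theta(\hat F+\partial\bar\partial\tilde u_\infty)=\hat\theta$ does not by itself force $\tilde u_\infty\equiv0$: the zero-average normalization only selects a representative within the one-parameter family $\{\tilde u_\infty+c\}$, while concluding that $\tilde u_\infty$ is constant is precisely the assertion that all solutions of \eqref{eqn-dHYM-angle} in the fixed class are constants --- i.e.\ uniqueness again, not a way around it. So keep the uniqueness-based identification (or argue it directly by a maximum-principle/mean-value comparison of $\tilde u_\infty$ with $0$, which is the same computation used for $\theta(F_u)-\hat\theta-\Delta_\eta u$ in the proof of Theorem \ref{thm-smallhessian}); with that, your proof is complete.
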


\begin{proof}
  Integrating from $+\infty$ to $t$ and apply Theorem \ref{thm-exponental-dot}, we get that $\tilde u=u-\frac{\int_X u \omega^n}{\int_X \omega^n}$ converges exponentially to $0$ in $C^0$.

  We denote $D$ to be the gradient with respect to $\omega$. For any $k\geq 1$, we consider the following inequality
  \[
  \begin{split}
    &\frac{\partial}{\partial t}\int_X |D^k \tilde u|^2_\omega \omega^n\\
    =&\int_X D^k {\tilde u}* D^k \dot {\tilde u}\omega^n\\
    =&\int_X D^{2k}{\tilde u} *\dot {\tilde u} \omega^n\\
    \leq & (\int_X |D^{2k}{\tilde u}|^2\omega^n)^{\frac{1}{2}}(\int_X {\dot {\tilde u}^2} \omega^n)^{\frac{1}{2}}\\
    \leq &C_1e^{-C_2t}.
  \end{split}
  \]
  Integrating form $+\infty$ to $t$, we get that
  \[
  ||\tilde u||_{W^{k,2}(\omega)}\leq C_1e^{-C_2t}.
  \]
  Then by the Sobolev embedding theorem, we obtain that
  \[
  ||\tilde u||_{C^{k'}}\leq ||\tilde u||_{W^{k,2}(\omega)} \leq C_1e^{-C_2t}.\qedhere
  \]
\end{proof}

\section*{Acknowledgement}

Both authors are grateful to Professor \replaced{Xinan Ma, Xi Zhang and Xiaohua Zhu}{Xiaohua Zhu, Xi Zhang and  Xinan Ma} for helpful suggestions on this subject. The second author is supported by the Fundamental Research Funds for the Central Universities and the Research Funds of Renmin University of China.

\end{document}